\documentclass{lmcs}
\usepackage[utf8]{inputenc}
\pdfoutput=1

\usepackage{lastpage}
\lmcsdoi{16}{3}{8}
\lmcsheading{}{\pageref{LastPage}}{}{}%
{May~23,~2018}{Aug.~05,~2020}{}

\usepackage{amscd}
\usepackage{graphics}
\usepackage{hyperref}
\usepackage{color}
\usepackage{cancel}
\usepackage{stmaryrd}
\usepackage{tikz}
\usetikzlibrary{matrix,arrows}
\usepackage{amsmath}
\usepackage{amsthm}
\usepackage{amssymb}
\usepackage{proof}
\usepackage{verbatim,cmll}
\usepackage{setspace}
\usepackage{lscape}
\usepackage{latexsym,relsize}
\usepackage{amscd}
\usepackage{multicol}
\usepackage{bussproofs}
\usepackage{newtxmath}
\usepackage{tikz}
\usepackage[position=top]{subfig}
\usepackage{leqno}
\usepackage{marvosym}
\DeclareSymbolFont{extraup}{U}{zavm}{m}{n}
\DeclareMathSymbol{\varheart}{\mathalpha}{extraup}{86}
\DeclareMathSymbol{\vardiamond}{\mathalpha}{extraup}{87}
\DeclareMathSymbol{\vardiamond}{\mathalpha}{extraup}{87}
\usetikzlibrary{arrows}
\usetikzlibrary{matrix}
\usetikzlibrary{patterns}
\usetikzlibrary{shapes}

\newcommand{\jir}{J^{\infty}(\bba^{\delta})}
\newcommand{\mir}{M^{\infty}(\bba^{\delta})}

\newcommand{\commment}[1]{}

\renewcommand{\phi}{\varphi}
\renewcommand{\emptyset}{\varnothing}
\newcommand{\ca}{\mathbb{A}^\delta}


\renewcommand{\epsilon}{\varepsilon}

\newcommand{\nomi}{\mathbf{i}}
\newcommand{\nomj}{\mathbf{j}}

\newcommand{\nomh}{\mathbf{h}}
\newcommand{\cnomm}{\mathbf{m}}
\newcommand{\cnomn}{\mathbf{n}}

\newcommand{\blhd}{\blacktriangleleft_f}
\newcommand{\brhd}{\blacktriangleright_g}

\newcommand{\bba}{\mathbb{A}}
\newcommand{\bbb}{\mathbb{B}}
\newcommand{\bbA}{\mathbb{A}}
\newcommand{\bbB}{\mathbb{B}}
\newcommand{\bbas}{\bbA^{\delta}}
\newcommand{\bbbs}{\bbB^{\delta}}

\newcommand{\kbbas}{K(\mathbb{A}^\delta)}
\newcommand{\obbas}{O(\mathbb{A}^\delta)}
\newcommand{\obbbs}{O(\mathbb{B}^\delta)}
\newcommand{\kbbbs}{K(\mathbb{B}^\delta)}
\newcommand{\rhu}{\rightharpoonup}
\newcommand{\lhu}{\leftharpoonup}

\newcommand{\bigamp}{\mathop{\mbox{\Large \&}}}



\newcommand*\circled[1]{\tikz[baseline=(char.base)]{
		\node[shape=circle ,draw, minimum size=3.5mm, inner sep=0pt] (char) {#1};}}

\tikzset{
	treenode/.style = {align=center, inner sep=0pt, text centered},
	Ske/.style = {treenode, ellipse, double, draw=black,
		minimum width=6pt, thick},
	PIA/.style = {treenode, ellipse, black, draw=black,
		minimum width=6pt},
	Crit/.style = {treenode, rectangle, draw=black,
		minimum width=0.5em, minimum height=0.5em}
}

\title{Constructive Canonicity of Inductive Inequalities}
\author[W.~Conradie]{Willem Conradie\rsuper{a}\rsuper{1}}
\author[A.~Palmigiano]{Alessandra Palmigiano\rsuper{b}\rsuper{c}\rsuper{2}}

\thanks{\lsuper{1} Supported by the National Research Foundation of South Africa, Grant number 81309, and the financial assistance of the Faculty of Science of the University of the Witwatersrand.}
\thanks{\lsuper{2} Supported by the NWO Vidi grant 016.138.314, by the NWO Aspasia grant 015.008.054, and by a Delft Technology Fellowship awarded in 2013.}

\address{\lsuper{a} School of Mathematics, University of the Witwatersrand, Johannesburg, South Africa}
\address{\lsuper{b} School of Business and Economics, Vrije Universiteit, Amsterdam, The Netherlands}
\address{\lsuper{c} Department of Mathematics and Applied Mathematics, University of Johannesburg, South Africa}

\keywords{modal logic, Sahlqvist theory, algorithmic correspondence theory,
constructive canonicity, lattice theory}
\amsclass{03B45, 06D50, 06D10, 03G10, 06E15}

\begin{document}
\maketitle

  \begin{abstract}
  We prove the canonicity of inductive inequalities  in a constructive meta-theory, for  classes of logics algebraically captured by varieties of normal and regular lattice expansions. This result encompasses Ghilardi-Meloni's  and Suzuki's constructive canonicity results for Sahlqvist formulas and inequalities
  , and  is based on an application of the tools of unified correspondence theory. Specifically, we provide an alternative interpretation of the language of the algorithm ALBA for lattice expansions:  nominal and conominal variables are respectively interpreted  as closed and open elements of  canonical extensions of normal/regular lattice expansions, rather than as completely join-irreducible and meet-irreducible elements of perfect normal/regular lattice expansions. We show the correctness of ALBA with respect to this interpretation. From this fact,  the constructive canonicity of the inequalities on which ALBA succeeds follows by an
  adaptation of the standard  argument.  The claimed result then follows as a consequence  of the  success of ALBA on  inductive inequalities.\\ 
\end{abstract}

\section{Introduction} 
Perhaps the most important uniform methodology for proving Kripke completeness for modal logics is the notion of canonicity, which, thanks to duality, can be studied both frame-theoretically and  algebraically. Frame-theoretically, canonicity can be formulated as {\em d-persistence}, i.e.\ preservation of validity from any given descriptive general frame to its underlying Kripke frame (or in other words, equivalence between validity w.r.t.\ admissible assignments and w.r.t.\ arbitrary assignments); algebraically, as preservation of validity from any given modal algebra to its canonical extension. The study of canonicity has been extended from classical normal modal logic to its many neighbouring logics, and has given rise to a rich literature. Particularly  relevant to the present paper are two general methods for canonicity, pioneered by Sambin and Vaccaro \cite{SaVa89} and by Ghilardi and Meloni \cite{GhMe97}. Sambin and Vaccaro obtain canonicity for  Sahlqvist formulas of classical modal logic in a frame-theoretic setting as a byproduct of correspondence. The core of their proof strategy is the observation that, whenever it exists, the first-order correspondent of a modal formula provides an equivalent rewriting of the modal formula with no occurring propositional variables, so that its validity w.r.t.\ admissible assignments is tantamount to its validity w.r.t.\ arbitrary assignments, which proves d-persistence. Sambin-Vaccaro's proof strategy, sometimes called {\em canonicity-via-correspondence}, has been imported to the algorithmic proof of canonicity given in \cite{CoGoVa06},  which achieves a uniform proof for the widest class known so far, the so-called {\em inductive} formulas, which significantly extends the class of Sahlqvist inequalities. Here it is the algorithm SQEMA which produces the equivalent rewriting (i.e.\ the first-order correspondent) of the modal formula, and this is something it does successfully for (at least) all inductive formulas.

Ghilardi and Meloni's work \cite{GhMe97} shows that canonicity can be meaningfully investigated purely algebraically, in a constructive meta-theory where correspondence is not even defined, in general. Indeed, in \cite{GhMe97}, the canonical extension construction for certain bi-intuitionistic modal algebras, later applied also to general lattice and poset expansions in \cite{GeHa01,DunnGP05}, is formulated in terms of general filters and ideals, and does not depend on any form of the axiom of choice (such as the existence of `enough' optimal filter-ideal pairs). Thus, while the constructive canonical extension need not be perfect in the sense of \cite{JoTa52}, the canonical embedding map, sending the original algebra into its canonical extension, retains the properties of denseness and compactness (cf.\ Definition \ref{Def:Canon:Ext}). These properties make it possible for the authors of \cite{GhMe97} to identify a class of formulas which are {\em constructively canonical}  (i.e.\ the validity of which is preserved under constructive canonical extension). These inequalities are identified from the order-theoretic properties of the induced term-functions, and the preservation of their validity  from an algebra to its constructive canonical extension is shown in two steps: first, from the elements of a given algebra to the closed/open elements of its canonical extension, and then from the closed/open elements to arbitrary elements. This proof strategy is very similar to that of J\'{o}nsson \cite{Jonsson94}, but was developed independently. It should be noted however that, in terms of the classes of formulas to which they apply, the results of both \cite{GhMe97} and \cite{Jonsson94} fall within the scope of applicability of the canonicity-via-correspondence method.

The approaches of Sambin-Vaccaro, on the one hand, and Ghilardi-Meloni, 
on the other, have been very influential, and have contributed to a certain binary divide detectable in the literature between correspondence and canonicity, namely: correspondence being typically developed on frames, and canonicity on algebras. Moreover, subsequent algebraic proofs of canonicity have mostly remained restricted to Sahlqvist formulas, rather than considering e.g.\ the wider class of inductive formulas \cite{GorankoV06}.  For the latter class, the first algebraic proof of canonicity in the style of J\'{o}nsson-Ghilardi-Meloni  appeared only very recently \cite{PaSoZh15a}. \emph{Unified correspondence theory} \cite{CoGhPa14}, to which the contributions of the present paper belong, bridges this divide in the sense that will be explained below, and by doing this, succeeds in importing Sambin-Vaccaro's proof strategy to the constructive setting of Ghilardi-Meloni, thus providing the conceptual unification of these very different perspectives which had remained a prominent open problem in the subsequent literature. For instance,  the intermediate step of \cite{GhMe97} can be recognized as the equivalent rewriting, independent of the evaluation of proposition variables, pursued in \cite{SaVa89}.

At the core of unified correspondence is an {\em algebraic} reformulation of {\em correspondence} theory \cite{ConPalSou}, with its ensuing algorithmic canonicity-via-correspondence argument \cite{ConPal12}. This reformulation makes it possible to construe the computation of first-order correspondents in two phases: reduction, and translation (cf.\ \cite{ConPal13} for an expanded discussion on this point).
Formulas/inequalities are interpreted in the canonical extension $\bbas$ of a given algebra $\bba$, and a calculus of rules (captured by the ALBA algorithm) is  applied to rewrite them into equivalent expressions with no occurring propositional variables, called pure. The pure expressions may, however, contain (non-propositional) variables known as nominals and co-nominals. If successful, achieving pure expressions completes  the reduction phase. Pure expressions are already enough to implement Sambin-Vaccaro's canonicity strategy: indeed, the validity of pure expressions under assignments sending propositional variables into $\bba$ (identified with the {\em admissible} assignments on $\bbas$) is tantamount to their validity w.r.t.\ arbitrary assignments on $\bbas$, and this establishes the canonicity of the original formula or inequality.

In a non-constructive setting, the nominals and co-nominals are interpreted as ranging over the completely join-irreducible or meet-irreducible elements of $\bbas$. The soundness of the rewrite rules is based, in part, on the fact that the completely join-irreducible and meet-irreducible elements respectively join-generate and meet-generate the ambient algebra $\bbas$. Moreover, in this setting, completely meet- and join-irreducible elements correspond, via discrete duality, to first-order definable subsets of the dual relational semantics. Thus the first-order frame correspondent of the original formula or inequality can be obtained by simply applying the appropriate standard translation to the pure expressions. This is known as the translation phase.

In a constructive setting, the situation just described is changed by the fact that we can no longer rely on the completely join-irreducible and meet-irreducible elements to respectively join-generate and meet-generate $\bbas$. 
However, we may fall back on the closed and open elements of $\bbas$ as complete join- and meet-generators, and adjust the interpretations of nominals and co-nominals accordingly. By doing this, the reduction phase remains sound in the non-constructive setting, and still yields canonicity. As expected, however, discrete duality is not available in general, and the possibility of translating to the relational semantics only applies modulo restriction to the setting in which discrete duality is available. 

In the present paper, we expand on the ideas outlined above  and give formal proofs of their  correctness. We will do this in the setting of arbitrary normal or regular lattice expansions, and  prove the constructive canonicity of inductive inequalities in the appropriate signature. The inductive inequalities significantly enlarge the set of Sahlqvist inequalities, accounting for important axiomatic instances\footnote{Well known formulas such as Frege's axiom are inductive but not Sahlqvist (cf.\ Example \ref{ex: frege as type 3}).}, and
the canonicity proof for inductive inequalities presented here subsumes and unifies those of \cite{GhMe97,SaVa89}. 

The results of the present paper contribute to the realization of one of the core objectives of unified correspondence theory, which is to create an environment in which different proof techniques for canonicity and correspondence can be systematically compared and    connected  to each other. For instance, in \cite{CoPaZh15a}, the algorithmic route to correspondence and canonicity for distributive lattice based logics has been compared to the route via reduction to the Boolean setting, by means of G\"{o}del-type translations. In the specific case of canonicity, the results of the present paper build on techniques developed investigating  the phenomenon of canonicity via pseudocorrespondence \cite{CoPaSoZh15}, and concur to complete the picture begun in \cite{PaSoZh15a}, where  Sambin-Vaccaro's methodology for canonicity has been systematically connected to  J\'onsson's algebraic but non-constructive canonicity technique. These results also serve as base for proving constructive canonicity in the setting of mu-calculi \cite{CCPZ}, a result which bridges constructive canonicity with algebraic and algorithmic  correspondence and canonicity for mu-calculi \cite{CoCr14,CoFoPaSo15}. Moreover, the present results are of direct relevance to the problem of canonicity for possibility semantics in modal logic (cf.\ \cite{Ho16,YZ16}, more about this in the conclusions).

Recently, systematic connections have been established between  unified correspondence  and the theory of proper display  calculi for certain axiomatic extensions of basic normal DLE-logics  \cite{GMPTZ}. These connections have been also put to use in defining analytic Gentzen calculi for subintuitionistic logics \cite{MaZh16}. In particular, a uniform argument for proving that the resulting proper display calculi are sound and conservative crucially uses the canonicity of (a certain subclass of) inductive inequalities. From these and other results, a perspective in modern structural proof theory has emerged which is based on the systematic integration of results in algebraic logic into the design of analytic calculi; the results and insights of the present paper naturally fit also in this line of investigation (more about this in the conclusions).

It is interesting to observe that, while  the core tools of unified correspondence, and the algorithm ALBA in particular,   have proven their versatility in settings as diverse as hybrid logics \cite{CoRo14}, many valued logics \cite{LeRoux:MThesis:2016}, and monotone modal logic \cite{FrPaSa14}, through the development of  applications such as \cite{PaSoZh15a,GMPTZ,CoPaSoZh15}, these tools have acquired  novel conceptual significance, which cannot be reduced exclusively to their original purpose as  computational tools for correspondence theory. In this respect, the results of the present paper are yet another instance of the potential of ALBA to be used as a general-purpose computational tool, capable of meaningfully contributing to more general and different issues than pure correspondence.

\paragraph{Structure of the paper.} In Section \ref{Sec:Prelim} we provide some necessary preliminaries. Particularly, we introduce the logical languages we will consider, we provide them with algebraic semantics in the form of lattices expanded with normal and regular operations, and discuss the constructive canonical extensions of the latter. In Section \ref{Sec:Ind:Sahl:Ineq} we define the Inductive and Sahlqvist inequalities in the setting of lattices with normal and regular operations. Section \ref{sec:constructive ALBA} contains the specification of the constructive version of ALBA, and the partial correctness of this algorithm is proved in Section \ref{Sec:Crctns:ALBA}. The latter relies on some technical lemmas which have been relegated to Appendix \ref{sec:appendix}. We next show, in Section \ref{Sec:Complete:For:Inductive:Section}, that constructive ALBA successfully reduces all inductive and Sahlqvist inequalities. In Section \ref{sec:canonicity} we prove that all inequalities on which constructive ALBA succeeds are constructively canonical. From this our main theorem follows, i.e. that all inductive inequalities are constructively canonical. We conclude in Section \ref{Sec:Conclusions}.

\section{Preliminaries}\label{Sec:Prelim}

\subsection{Language}
Our base language is an unspecified but fixed language $\mathcal{L}_\mathrm{LE}$, to be interpreted over  lattice expansions of compatible similarity type.
We will make heavy use of the following auxiliary definition: an {\em order-type} over $n\in \mathbb{N}$ is an $n$-tuple $\epsilon\in \{1, \partial\}^n$. We will write $\varepsilon(i)$ or $\varepsilon_i$ for the $i$-th component of order type $\epsilon$ with $1 \leq i \leq n$. We denote its {\em opposite} order type by $\epsilon^\partial$, that is, $\epsilon^\partial_i = 1$ iff $\epsilon_i=\partial$ for every $1 \leq i \leq n$. Throughout the paper, order-types will be typically associated with arrays of variables $\vec p: = (p_1,\ldots, p_n)$. When the order of the variables in $\vec p$ is not specified, we will sometimes abuse notation and write $\varepsilon(p) = 1$ or $\varepsilon(p) = \partial$. For any lattice $\bba$, we let $\bba^1: = \bba$ and $\bba^\partial$ be the dual lattice, that is, the lattice associated with the converse partial order of $\bba$. Accordingly, we write $\leq^{\partial}$ for $\geq$, $\geq^{\partial}$ for $\leq$, $\wedge^{\partial}$ for $\vee$,  $\vee^{\partial}$ for $\wedge$, $\top^{\partial}$ for $\bot$ and $\bot^{\partial}$ for $\top$, while $\star^1$ denotes $\star$ for any symbol $\star \in \{\leq, \geq, \wedge, \vee, \top, \bot\}$. For any order type $\varepsilon$, we let $\bba^\varepsilon: = \Pi_{i = 1}^n \bba^{\varepsilon_i}$.
	
The language $\mathcal{L}_\mathrm{LE}(\mathcal{F}, \mathcal{G})$ (from now on abbreviated as $\mathcal{L}_\mathrm{LE}$) takes as parameters: 1) a denumerable set $\mathsf{PROP}$ of proposition letters, elements of which are denoted $p,q,r$, possibly with indexes; 2) disjoint sets of connectives $\mathcal{F}$ and  $\mathcal{G}$ such that $\mathcal{F}: = \mathcal{F}_r\uplus \mathcal{F}_n$ and $\mathcal{G}: = \mathcal{G}_r\uplus \mathcal{G}_n$.  Each $f\in \mathcal{F}$ and $g\in \mathcal{G}$ has arity $n_f\in \mathbb{N}$ (resp.\ $n_g\in \mathbb{N}$) and is associated with some order-type $\varepsilon_f$ over $n_f$ (resp.\ $\varepsilon_g$ over $n_g$).\footnote{Unary $f$ (resp.\ $g$) will be sometimes denoted as $\Diamond$ (resp.\ $\Box$) if the order-type is 1, and $\lhd$ (resp.\ $\rhd$) if the order-type is $\partial$.} Connectives belonging to $\mathcal{F}_r$ or $\mathcal{G}_r$ are always unary. 
	  
The motivation behind splitting the signature into these disjoint sets will become clearer in the next subsection but, in brief, the idea is that the algebraic interpretation of members $f \in \mathcal{F}$ (resp.\ $g \in \mathcal{G}$) preserve finite joins (resp.\ meets) in arguments $i$ with $\varepsilon_f(i) = 1$ (resp.\ $\varepsilon_g(i) = 1$) while turning finite meets (resp.\ joins) in argument $i$ into joins if $\varepsilon_f(i) = 1\partial$ (resp.\ $\varepsilon_g(i) = 1$). For $f \in \mathcal{F}_r$ and $g \in \mathcal{G}_r$ this only applies to non-empty finite meets and joins.

The terms (formulas) of $\mathcal{L}_\mathrm{LE}$ are defined recursively as follows:
	\[
	\phi ::= p \mid \bot \mid \top \mid \phi \wedge \phi \mid \phi \vee \phi \mid f(\overline{\phi}) \mid g(\overline{\phi})
	\]
	where $p \in \mathsf{PROP}$, $f \in \mathcal{F}$, $g \in \mathcal{G}$. Terms in $\mathcal{L}_\mathrm{LE}$ will be denoted either by $s,t$, or by lowercase Greek letters such as $\varphi, \psi, \gamma$ etc. 


\begin{rem}\label{rem:cmpctpres:signatures}
	The above presentation of the language $\mathcal{L}_{LE}$ follows the conventions established in a series of papers, including \cite{ConPal13, CoPaZh15a, GMPTZ}. However, as should be clear by the comments above, the intended interpretations of the connectives belonging to families $\mathcal{F}$ and $\mathcal{G}$ are order-dual to one another. As such it is possible to present them more compactly, viz.\ by specifying that a signature consists of a family $\mathcal{H}$ of connectives, such that each $h \in \mathcal{H}$ has arity $n_h \in \mathbb{N}$, and \emph{adjunction-type} $\alpha_{h} \in \{1, \partial\}$. (Relating this to the standard terminology, above, $\mathcal{F} = \{h \in \mathcal{H} \mid \alpha_h = 1 \}$ and $\mathcal{G} = \{h \in \mathcal{H} \mid \alpha_h = \partial \}$.) A subset, denoted $\mathcal{H}_n$, of the connectives in $\mathcal{H}$ are designated as \emph{normal}, while the remainder, $\mathcal{H}_r  = \mathcal{H} \setminus \mathcal{H}_n$, are \emph{regular}.  Every regular connective $h \in \mathcal{H}_r$ has arity $n_h = 1$. A number of notions can be formulated mores succinctly using this convention, see Remarks \ref{rem:cmpctpres:LE:Alg}, \ref{rem:compact:nrmalized:ops} and \ref{rem:cmpct:extended:language}.
\end{rem}

\subsection{Lattice expansions, and their canonical extensions}
The following definition captures the algebraic setting of the present paper, which  generalizes the normal lattice expansions of \cite{ConPal13} and the regular distributive lattice expansions of \cite{PaSoZh15b}. In what follows, we will refer to these algebras simply as {\em lattice expansions}.
	
	\begin{defi}
		\label{def:DLE}
		For any tuple $(\mathcal{F}, \mathcal{G})$ of function symbols as above, a {\em  lattice expansion} (LE) is a tuple $\bba = (L, \mathcal{F}^\bbA, \mathcal{G}^\bbA)$ such that $L$ is a bounded  lattice, $\mathcal{F}^\bbA = \{f^\bbA\mid f\in \mathcal{F}\}$ and $\mathcal{G}^\bbA = \{g^\bbA\mid g\in \mathcal{G}\}$, such that every $f^\bbA\in\mathcal{F}^\bbA$ (resp.\ $g^\bbA\in\mathcal{G}^\bbA$) is an $n_f$-ary (resp.\ $n_g$-ary) operation on $\bbA$, and moreover,
\begin{enumerate} 
\item every $f^\bbA\in\mathcal{F}_n^\bbA$ (resp.\ $g^\bbA\in\mathcal{G}_n^\bbA$) preserves finite (hence also empty) joins (resp.\ meets) in each coordinate with $\epsilon_f(i)=1$ (resp.\ $\epsilon_g(i)=1$) and reverses finite (hence also empty) meets (resp.\ joins) in each coordinate with $\epsilon_f(i)=\partial$ (resp.\ $\epsilon_g(i)=\partial$).
\item every $f^\bbA\in\mathcal{F}_r^\bbA$ (resp.\ $g^\bbA\in\mathcal{G}_r^\bbA$) preserves finite nonempty joins (resp.\ meets) if $\epsilon_f=1$ (resp.\ $\epsilon_g=1$) and reverses finite nonempty meets (resp.\ joins) if $\epsilon_f=\partial$ (resp.\ $\epsilon_g =\partial$).
 \end{enumerate}
%
Let $\mathbb{LE}$ be the class of LEs. Sometimes we will refer to certain LEs as $\mathcal{L}_\mathrm{LE}$-algebras when we wish to emphasize that these algebras have a compatible signature with the logical language we have fixed.
	\end{defi}

\begin{rem}\label{rem:cmpctpres:LE:Alg}
	Following Remark \ref{rem:cmpctpres:signatures}, for any signature $\mathcal{H}$, an LE algebra can be defined as a structure $\mathbb{A} = (L, \mathcal{H}^{\mathbb{A}})$ such that $L$ is a bounded lattice and $\mathcal{H}^{\mathbb{A}} = \{h^{\mathbb{A}} \mid h \in \mathcal{H}\}$ is a family of operations on $L$ such that $h^{\mathbb{A}}$ takes $n_h$ arguments, and $h^{\mathbb{A}}(a_1, \ldots, a (\vee^{\epsilon_h(i)})^{\alpha(h)} b, \ldots, a_{n_h}) = h^{\mathbb{A}}(a_1, \ldots, a, \ldots, a_{n_h}) \vee^{\alpha_h} h^{\mathbb{A}}(a_1, \ldots, b, \ldots, a_{n_h})$. Moreover, if $h \in \mathcal{H}_n$, then we have that $h^{\mathbb{A}}(a_1, \ldots, (\bot^{\epsilon_h(i)})^{\alpha(h)}, \ldots, a_{n_h}) = \bot^{\alpha_h}$. This clearly makes possible analogous compact presentations of the identities below as well as the axioms and rules in Definition \ref{def:DLE:logic:general}.
\end{rem}

In the remainder of the paper,
we will abuse notation and write e.g.\ $f$ for $f^\bbA$ when this causes no confusion.
The class of all LEs is equational, and can be axiomatized by the usual lattice identities and the following equations for any $f\in \mathcal{F}$ (resp.\ $g\in \mathcal{G}$) and $1\leq i\leq n_f$ (resp.\ for each $1\leq j\leq n_g$):
	\begin{itemize}
		\item if $\varepsilon_f(i) = 1$, then $f(p_1,\ldots, p\vee q,\ldots,p_{n_f}) = f(p_1,\ldots, p,\ldots,p_{n_f})\vee f(p_1,\ldots, q,\ldots,p_{n_f})$; moreover if $f\in \mathcal{F}_n$, then $f(p_1,\ldots, \bot,\ldots,p_{n_f}) = \bot$,
		\item if $\varepsilon_f(i) = \partial$, then $f(p_1,\ldots, p\wedge q,\ldots,p_{n_f}) = f(p_1,\ldots, p,\ldots,p_{n_f})\vee f(p_1,\ldots, q,\ldots,p_{n_f})$; moreover if $f\in \mathcal{F}_n$, then  $f(p_1,\ldots, \top,\ldots,p_{n_f}) = \bot$,
		\item if $\varepsilon_g(j) = 1$, then $g(p_1,\ldots, p\wedge q,\ldots,p_{n_g}) = g(p_1,\ldots, p,\ldots,p_{n_g})\wedge g(p_1,\ldots, q,\ldots,p_{n_g})$; moreover if $g\in \mathcal{G}_n$, then  $g(p_1,\ldots, \top,\ldots,p_{n_g}) = \top$,
		\item if $\varepsilon_g(j) = \partial$, then $g(p_1,\ldots, p\vee q,\ldots,p_{n_g}) = g(p_1,\ldots, p,\ldots,p_{n_g})\wedge g(p_1,\ldots, q,\ldots,p_{n_g})$; moreover if $g\in \mathcal{G}_n$, then  $g(p_1,\ldots, \bot,\ldots,p_{n_g}) = \top$.
	\end{itemize}
	Each language $\mathcal{L}_\mathrm{LE}$ is interpreted in the appropriate class of LEs. In particular, for every LE $\bba$, each operation $f^\bba\in \mathcal{F}_n^\bbA$ (resp.\ $g^\bba\in \mathcal{G}_n^\bbA$) is finitely join-preserving (resp.\ meet-preserving) in each coordinate when regarded as a map $f^\bba: \bba^{\varepsilon_f}\to \bba$ (resp.\ $g^\bba: \bba^{\varepsilon_g}\to \bba$), and each operation $f^\bba\in \mathcal{F}_r^\bbA$ (resp.\ $g^\bba\in \mathcal{G}_r^\bbA$) preserves nonempty joins (resp.\ nonempty meets) in each coordinate when regarded as a map $f^\bba: \bba^{\varepsilon_f}\to \bba$ (resp.\ $g^\bba: \bba^{\varepsilon_g}\to \bba$).
	\begin{defi}
		\label{def:DLE:logic:general}
		For any language $\mathcal{L}_\mathrm{LE} = \mathcal{L}_\mathrm{LE}(\mathcal{F}, \mathcal{G})$, the {\em basic}, or {\em minimal} $\mathcal{L}_\mathrm{LE}$-{\em logic} is a set of sequents $\phi\vdash\psi$, with $\phi,\psi\in\mathcal{L}_\mathrm{LE}$, which contains the following axioms:
		\begin{itemize}
			\item Sequents for lattice operations:
			\begin{align*}
				&p\vdash p, && \bot\vdash p, && p\vdash \top, & &  &\\
				&p\vdash p\vee q, && q\vdash p\vee q, && p\wedge q\vdash p, && p\wedge q\vdash q, &
			\end{align*}
			\item Sequents for $f\in \mathcal{F}$ and $g\in \mathcal{G}$:
			\begin{align*}
				&f(p_1,\ldots, p\vee q,\ldots,p_{n_f}) \vdash f(p_1,\ldots, p,\ldots,p_{n_f})\vee f(p_1,\ldots, q,\ldots,p_{n_f}),~\mathrm{for}~ \varepsilon_f(i) = 1,\\
				&f(p_1,\ldots, p\wedge q,\ldots,p_{n_f}) \vdash f(p_1,\ldots, p,\ldots,p_{n_f})\vee f(p_1,\ldots, q,\ldots,p_{n_f}),~\mathrm{for}~ \varepsilon_f(i) = \partial,\\
				& g(p_1,\ldots, p,\ldots,p_{n_g})\wedge g(p_1,\ldots, q,\ldots,p_{n_g})\vdash g(p_1,\ldots, p\wedge q,\ldots,p_{n_g}),~\mathrm{for}~ \varepsilon_g(i) = 1,\\
				& g(p_1,\ldots, p,\ldots,p_{n_g})\wedge g(p_1,\ldots, q,\ldots,p_{n_g})\vdash g(p_1,\ldots, p\vee q,\ldots,p_{n_g}),~\mathrm{for}~ \varepsilon_g(i) = \partial,
            \end{align*}
\item Additional sequents for $f\in \mathcal{F}_n$ and $g\in \mathcal{G}_n$:
			\begin{align*}
				& f(p_1,\ldots, \bot,\ldots,p_{n_f}) \vdash \bot,~\mathrm{for}~ \varepsilon_f(i) = 1,\\
				& f(p_1,\ldots, \top,\ldots,p_{n_f}) \vdash \bot,~\mathrm{for}~ \varepsilon_f(i) = \partial,\\
				&\top\vdash g(p_1,\ldots, \top,\ldots,p_{n_g}),~\mathrm{for}~ \varepsilon_g(i) = 1,\\
				&\top\vdash g(p_1,\ldots, \bot,\ldots,p_{n_g}),~\mathrm{for}~ \varepsilon_g(i) = \partial,\\
			\end{align*}
		\end{itemize}
		and is closed under the following inference rules:
		\begin{displaymath}
			\frac{\phi\vdash \chi\quad \chi\vdash \psi}{\phi\vdash \psi}
			\quad
			\frac{\phi\vdash \psi}{\phi(\chi/p)\vdash\psi(\chi/p)}
			\quad
			\frac{\chi\vdash\phi\quad \chi\vdash\psi}{\chi\vdash \phi\wedge\psi}
			\quad
			\frac{\phi\vdash\chi\quad \psi\vdash\chi}{\phi\vee\psi\vdash\chi}
		\end{displaymath}
		\begin{displaymath}
			 \frac{\phi\vdash\psi}{f(p_1,\ldots,\phi,\ldots,p_n)\vdash f(p_1,\ldots,\psi,\ldots,p_n)}{~(\varepsilon_f(i) = 1)}
		\end{displaymath}
		\begin{displaymath}
			 \frac{\phi\vdash\psi}{f(p_1,\ldots,\psi,\ldots,p_n)\vdash f(p_1,\ldots,\phi,\ldots,p_n)}{~(\varepsilon_f(i) = \partial)}
		\end{displaymath}
		\begin{displaymath}
			 \frac{\phi\vdash\psi}{g(p_1,\ldots,\phi,\ldots,p_n)\vdash g(p_1,\ldots,\psi,\ldots,p_n)}{~(\varepsilon_g(i) = 1)}
		\end{displaymath}
		\begin{displaymath}
			 \frac{\phi\vdash\psi}{g(p_1,\ldots,\psi,\ldots,p_n)\vdash g(p_1,\ldots,\phi,\ldots,p_n)}{~(\varepsilon_g(i) = \partial)}.
		\end{displaymath}
		The minimal LE-logic is denoted by $\mathbf{L}_\mathrm{LE}$. For any LE-language $\mathcal{L}_{\mathrm{LE}}$, by an {\em $\mathrm{LE}$-logic} we understand any axiomatic extension of the basic $\mathcal{L}_{\mathrm{LE}}$-logic in $\mathcal{L}_{\mathrm{LE}}$.
	\end{defi}
	
	For every LE $\bba$, the symbol $\vdash$ is interpreted as the lattice order $\leq$. A sequent $\phi\vdash\psi$ is valid in $\bba$ if $h(\phi)\leq h(\psi)$ for every homomorphism $h$ from the $\mathcal{L}_\mathrm{LE}$-algebra of formulas over $\mathsf{PROP}$ to $\bba$. The notation $\mathbb{LE}\models\phi\vdash\psi$ indicates that $\phi\vdash\psi$ is valid in every LE. Then, by means of a routine Lindenbaum-Tarski construction, it can be shown that the minimal LE-logic $\mathbf{L}_\mathrm{LE}$ is sound and complete with respect to its correspondent class of algebras $\mathbb{LE}$, i.e.\ that any sequent $\phi\vdash\psi$ is provable in $\mathbf{L}_\mathrm{LE}$ iff $\mathbb{LE}\models\phi\vdash\psi$. 

\begin{defi}\label{def:normalization}
For every $\mathcal{L}_{\mathrm{LE}}$-algebra $\bba$ and all $f\in \mathcal{F}_r$ and $g\in \mathcal{G}_r$, the {\em normalizations} of $f^\bba$ and $g^\bba$ are the operations defined as follows: if $\epsilon_f = \epsilon_g = 1$,
\begin{center}
\begin{tabular}{c c c}
$\Diamond_f(u):  =\begin{cases}
f(u) & \mbox{if } u\neq \bot\\
\bot & \mbox{if } u = \bot\\
\end{cases}
$
& $\quad\quad$&
$\Box_g(u): = \begin{cases}
g(u) & \mbox{if } u\neq \top\\
\top & \mbox{if } u = \top\\
\end{cases}$\\
\end{tabular}
\end{center}
if $\epsilon_f = \epsilon_g = \partial$,

\begin{center}
\begin{tabular}{c c c}
${\lhd}_f(u):  =\begin{cases}
f(u) & \mbox{if } u\neq \top\\
\bot & \mbox{if } u = \top\\
\end{cases}
$
& $\quad\quad$&
${\rhd}_g(u): = \begin{cases}
g(u) & \mbox{if } u\neq \bot\\
\top & \mbox{if } u = \bot\\
\end{cases}$\\
\end{tabular}
\end{center}
\end{defi}

\begin{lem}\label{lemma:LE algebras interpret normalizations}
For every $\mathcal{L}_{\mathrm{LE}}$-algebra $\bba$ and all $f\in \mathcal{F}_r$ and $g\in \mathcal{G}_r$,
\begin{enumerate}
\item if $\epsilon_f = 1$ then $\Diamond_f$ preserves finite (hence also empty) joins and $f(u) = f(\bot)\vee \Diamond_fu$ for every $u\in \bba$;
\item if $\epsilon_g = 1$ then $\Box_g$ preserves finite (hence also empty) meets and $g(u) = g(\top)\wedge \Box_gu$ for every $u\in \bba$;
\item if $\epsilon_f = \partial$ then ${\lhd}_f$ reverses finite (hence also empty) meets and $f(u) = f(\top)\vee {\lhd}_fu$ for every $u\in \bba$;
\item if $\epsilon_g = \partial$ then ${\rhd}_g$ reverses finite (hence also empty) joins and $g(u) = g(\bot)\wedge {\rhd}_gu$ for every $u\in \bba$.
\end{enumerate}
\end{lem}
\begin{proof}
1. If $u\vee v = \bot$, then $u =  \bot = v$, and  the claim immediately follows by definition of $\Diamond_f$. If $u\vee v \neq \bot$, then $\Diamond_f(u\vee v) = f(u\vee v) = f(u)\vee f(v)$. If $u \neq  \bot \neq v$ then the claim immediately follows by definition of $\Diamond_f$. If $u  =  \bot$ and $v\neq \bot$, then $u\leq v$ and the claim  follows by definition of $\Diamond_f$ and the monotonicity of $f$. Analogously if  $u \neq \bot$ and $v\neq \bot$. The second part of the claim immediately follows from the definition of $\Diamond_f$. The remaining items are order-variants and their proof is omitted.
\end{proof}
	
	\begin{rem}\label{rem:compact:nrmalized:ops}
		Using the formulation adumbrated in Remarks \ref{rem:cmpctpres:signatures} and \ref{rem:cmpctpres:LE:Alg}, Definition \ref{def:normalization} can be equivalently given by specifying that, for all regular connectives $h \in \mathcal{H}_r$, the normalization of $h^{\mathbb{A}}$ is the operation $\heartsuit_h$ given by 
		\[
		\heartsuit_h(u) = \left\{%
		\begin{array}{ll}
		h(u) &u \neq (\bot^{\alpha_h})^{\epsilon_h(1)} \\
		\bot^{\alpha_h} &u = (\bot^{\alpha_h})^{\epsilon_h(1)}
		\end{array} 
		\right.
		\]
		Using this formulation, Lemma \ref{lemma:LE algebras interpret normalizations} can be given by stating that, for all finite, possibly empty subsets $S$ of elements of an LE-algebra $\mathbb{A}$, $\heartsuit_h((\bigvee^{\alpha_h})^{\epsilon_h(1)} S) = \bigvee^{\alpha_h}_{s \in S} \heartsuit_h(s)$, and moreover $h(u) = h((\bot^{\alpha_h})^{\epsilon_h(1)}) \vee^{\alpha_h} \heartsuit_h(u)$.
	\end{rem}

	\subsection{The expanded language $\mathcal{L}_\mathrm{LE}^*$}
	\label{ssec:expanded tense language}
	We now introduce an expansion of LE-languages, which adds connectives intended to be interpreted as the normalized counterparts of all regular connectives, as well as connectives to be interpreted as the left and right residuals of interpretations of all normal connectives, including the introduced normalized counterparts.  Formally, any given language $\mathcal{L}_\mathrm{LE} = \mathcal{L}_\mathrm{LE}(\mathcal{F}, \mathcal{G})$ can be associated with the language $\mathcal{L}_\mathrm{LE}^* = \mathcal{L}_\mathrm{LE}(\mathcal{F}^*, \mathcal{G}^*)$, where $\mathcal{F}^*\supseteq \mathcal{F}$ and $\mathcal{G}^*\supseteq \mathcal{G}$ are obtained by expanding $\mathcal{L}_\mathrm{LE}$ in two steps as follows: as to the first step, let $\mathcal{F}'\supseteq \mathcal{F}_n$ and $\mathcal{G}'\supseteq \mathcal{G}_n$ be obtained by adding:
\begin{enumerate}
		\item for each $f\in\mathcal{F}_r$ s.t.\ $\varepsilon_f = 1$, the unary connective $\Diamond_f$ with $\varepsilon_{\Diamond_f} = 1$, the intended interpretation of which is the normalization of $f$ (cf.\ Definition \ref{def:normalization});
\item for each $f\in\mathcal{F}_r$ s.t.\ $\varepsilon_f = \partial$, the unary connective ${\lhd}_f$ with $\varepsilon_{{\lhd}_f} = \partial$, the intended interpretation of which is the normalization of $f$ (cf.\ Definition \ref{def:normalization});
		\item for each $g\in\mathcal{G}_r$ s.t.\ $\varepsilon_g = 1$, the unary connective $\Box_g$ with $\varepsilon_{{\Box}_g} = 1$, the intended interpretation of which is the normalization of $g$ (cf.\ Definition \ref{def:normalization});
\item for each $g\in\mathcal{G}_r$ s.t.\ $\varepsilon_g = \partial$, the unary connective ${\rhd}_g$ with $\varepsilon_{{\rhd}_g} = \partial$, the intended interpretation of which is the normalization of $g$ (cf.\ Definition \ref{def:normalization}).
	\end{enumerate}
As to the second step, let $\mathcal{F}_n^*\supseteq \mathcal{F}'$ and $\mathcal{G}_n^*\supseteq \mathcal{G}'$ be obtained by adding:
	\begin{enumerate}
		\item for $f\in\mathcal{F}'$ and  $0\leq i\leq n_f$, the $n_f$-ary connective $f^\sharp_i$, the intended interpretation of which is the right residual of $f\in\mathcal{F}_n$ in its $i$th coordinate if $\varepsilon_f(i) = 1$ (resp.\ its Galois-adjoint if $\varepsilon_f(i) = \partial$);
		\item for $g\in\mathcal{G}'$ and  $0\leq i\leq n_g$, the $n_g$-ary connective $g^\flat_i$, the intended interpretation of which is the left residual of $g\in\mathcal{G}_n$ in its $i$th coordinate if $\varepsilon_g(i) = 1$ (resp.\ its Galois-adjoint if $\varepsilon_g(i) = \partial$).
		\footnote{We reserve the symbols  $\Diamond$ and $\lhd$ to  denote unary connectives in $\mathcal{F}_n$ such that $\varepsilon_{\Diamond} = 1$ and $\varepsilon_{\lhd} = \partial$ and  $\Box$ and $\rhd$ to denote unary connectives in $\mathcal{G}_n$ such that $\varepsilon_{\Box} = 1$ and $\varepsilon_{\rhd} = \partial$. The adjoints of $\Box$, $\Diamond$, $\lhd$ and $\rhd$ are denoted $\Diamondblack$, $\blacksquare$, $\blhd$ and $\brhd$, respectively. For every $f\in \mathcal{F}_r$ and $g\in \mathcal{G}_r$, we let $\blacksquare_f$ and $\Diamondblack_g$ denote the right and left adjoint of $\Diamond_f$ and $\Box_g$ respectively if $\varepsilon_f = \varepsilon_g = 1$, and ${\blacktriangleleft}_f$ and ${\blacktriangleright}_g$ denote the Galois-adjoints of ${\lhd}_f$ and ${\rhd}_g$ if $\varepsilon_f = \varepsilon_g = \partial$.}
	\end{enumerate}
	We stipulate that
	$f^\sharp_i\in\mathcal{G}_n^*$ if $\varepsilon_f(i) = 1$, and $f^\sharp_i\in\mathcal{F}_n^*$ if $\varepsilon_f(i) = \partial$. Dually, $g^\flat_i\in\mathcal{F}_n^*$ if $\varepsilon_g(i) = 1$, and $g^\flat_i\in\mathcal{G}_n^*$ if $\varepsilon_g(i) = \partial$. The order-type assigned to the additional connectives is predicated on the order-type of their intended interpretations. That is, for any $f\in \mathcal{F}'$ and $g\in\mathcal{G}'$,
	\begin{enumerate}
		\item if $\epsilon_f(i) = 1$, then $\epsilon_{f_i^\sharp}(i) = 1$ and $\epsilon_{f_i^\sharp}(j) = (\epsilon_f(j))^\partial$ for any $j\neq i$.
		\item if $\epsilon_f(i) = \partial$, then $\epsilon_{f_i^\sharp}(i) = \partial$ and $\epsilon_{f_i^\sharp}(j) = \epsilon_f(j)$ for any $j\neq i$.
		\item if $\epsilon_g(i) = 1$, then $\epsilon_{g_i^\flat}(i) = 1$ and $\epsilon_{g_i^\flat}(j) = (\epsilon_g(j))^\partial$ for any $j\neq i$.
		\item if $\epsilon_g(i) = \partial$, then $\epsilon_{g_i^\flat}(i) = \partial$ and $\epsilon_{g_i^\flat}(j) = \epsilon_g(j)$ for any $j\neq i$.
	\end{enumerate}
	Finally,\footnote{\label{footnote: notation residuals} For instance, if $f$ and $g$ are binary connectives such that $\varepsilon_f = (1, \partial)$ and $\varepsilon_g = (\partial, 1)$, then $\varepsilon_{f^\sharp_1} = (1, 1)$, $\varepsilon_{f^\sharp_2} = (1, \partial)$, $\varepsilon_{g^\flat_1} = (\partial, 1)$ and $\varepsilon_{g^\flat_2} = (1, 1)$.Warning: notice that this notation heavily depends from the connective which is taken as primitive, and needs to be carefully adapted to well known cases. For instance, consider the  `fusion' connective $\circ$ (which, when denoted  as $f$, is such that $\varepsilon_f = (1, 1)$). Its residuals
$f_1^\sharp$ and $f_2^\sharp$ are commonly denoted $/$ and
$\backslash$ respectively. However, if $\backslash$ is taken as the primitive connective $g$, then $g_2^\flat$ is $\circ = f$, and
$g_1^\flat(x_1, x_2): = x_2/x_1 = f_1^\sharp (x_2, x_1)$. This example shows
that, when identifying $g_1^\flat$ and $f_1^\sharp$, the conventional order of the coordinates is not preserved, and depends of which connective
is taken as primitive.}
	let $\mathcal{F}^*:  = \mathcal{F}_n^*\uplus \mathcal{F}_r$ and $\mathcal{G}^*:  = \mathcal{G}_n^*\uplus \mathcal{G}_r$.
	
\begin{rem}\label{rem:cmpct:extended:language}
	Following Remarks \ref{rem:cmpctpres:signatures}, \ref{rem:cmpctpres:LE:Alg} and \ref{rem:compact:nrmalized:ops}, the language $\mathcal{L}_{LE}$ can alternatively be defined as follows: let $\mathcal{H}'$ be obtained from $\mathcal{H}$ by adding $\heartsuit_h$ for each $h \in \mathcal{H}_r$, with intended interpretation as given in Remark \ref{rem:compact:nrmalized:ops}. Next, obtain $\mathcal{H}^{*}$ by adding: (1) for each $h \in \mathcal{H}'$ with $\alpha_{h} = 1$, the $n_h$-ary connectives $h_i^{\sharp}$ for $1 \leq i \leq n_{h}$ where $\alpha_{h_i^{\sharp}} = (\alpha^{\partial}_h)^{\epsilon_{h}(i)}$ and $\epsilon_{h_i^{\sharp}}(i) = \epsilon_{h}(i)$ while $\epsilon_{h_i^{\sharp}}(j) = ((\epsilon_{h}(j))^{\partial})^{\epsilon_{h}(i)}$ for $j \neq i$; (2) for each $h \in \mathcal{H}'$ with $\alpha_{h} = \partial$, the $n_h$-ary connectives $h_i^{\flat}$ for $1 \leq i \leq n_{h}$ where $\alpha_{h_i^{\flat}} = (\alpha^{\partial}_h)^{\epsilon_{h}(i)}$ and $\epsilon_{h_i^{\flat}}(i) = \epsilon_{h}(i)$ while $\epsilon_{h_i^{\flat}}(j) = ((\epsilon_{h}(j))^{\partial})^{\epsilon_{h}(i)}$ for $j \neq i$.
\end{rem}
	
\begin{defi}
		For any language $\mathcal{L}_\mathrm{LE}(\mathcal{F}, \mathcal{G})$, the {\em basic expanded} $\mathcal{L}_\mathrm{LE}$-{\em logic} is defined by specializing Definition \ref{def:DLE:logic:general} to the language $\mathcal{L}_\mathrm{LE}^* = \mathcal{L}_\mathrm{LE}(\mathcal{F}^*, \mathcal{G}^*)$ 
		and closing under the following residuation rules for each $f\in \mathcal{F}'$ and $g\in \mathcal{G}'$:
			$$
			\begin{array}{cc}
			\AxiomC{$f(\varphi_1,\ldots,\phi,\ldots, \varphi_{n_f}) \vdash \psi$}
			\doubleLine
			\LeftLabel{$(\epsilon_f(i) = 1)$}
			\UnaryInfC{$\phi\vdash f^\sharp_i(\varphi_1,\ldots,\psi,\ldots,\varphi_{n_f})$}
			\DisplayProof
			&
			\AxiomC{$\phi \vdash g(\varphi_1,\ldots,\psi,\ldots,\varphi_{n_g})$}
			\doubleLine
			\RightLabel{$(\epsilon_g(i) = 1)$}
			\UnaryInfC{$g^\flat_i(\varphi_1,\ldots, \phi,\ldots, \varphi_{n_g})\vdash \psi$}
			\DisplayProof
			\end{array}
			$$
			$$
			\begin{array}{cc}
			\AxiomC{$f(\varphi_1,\ldots,\phi,\ldots, \varphi_{n_f}) \vdash \psi$}
			\doubleLine
			\LeftLabel{$(\epsilon_f(i) = \partial)$}
			 \UnaryInfC{$f^\sharp_i(\varphi_1,\ldots,\psi,\ldots,\varphi_{n_f})\vdash \phi$}
			\DisplayProof
			&
			\AxiomC{$\phi \vdash g(\varphi_1,\ldots,\psi,\ldots,\varphi_{n_g})$}
			\doubleLine
			\RightLabel{($\epsilon_g(i) = \partial)$}
			\UnaryInfC{$\psi\vdash g^\flat_i(\varphi_1,\ldots, \phi,\ldots, \varphi_{n_g})$}
			\DisplayProof
			\end{array}
			$$
		The double line in each rule above indicates that the rule should be read both top-to-bottom and bottom-to-top.
		Let $\mathbf{L}_\mathrm{LE}^*$ be the minimal expanded $\mathcal{L}_\mathrm{LE}$-logic. 
For any language $\mathcal{L}_{\mathrm{LE}}$, by a {\em expanded $\mathrm{LE}$-logic} we understand any axiomatic extension of the basic expanded  $\mathcal{L}_{\mathrm{LE}}$-logic in $\mathcal{L}^*_{\mathrm{LE}}$.
	\end{defi}
	
	The algebraic semantics of $\mathbf{L}_\mathrm{LE}^*$ is given by the class of $\mathcal{L}_\mathrm{LE}$-algebras defined as tuples $\bba = (L, \mathcal{F}^*, \mathcal{G}^*)$ such that $L$ is a lattice, and moreover,
		\begin{enumerate}
			
			\item for every $f\in \mathcal{F}'$ s.t.\ $n_f\geq 1$, all $a_1,\ldots,a_{n_f}\in L$ and $b\in L$, and each $1\leq i\leq n_f$,
			\begin{itemize}
				\item
				if $\epsilon_f(i) = 1$, then $f(a_1,\ldots,a_i,\ldots a_{n_f})\leq b$ iff $a_i\leq f^\sharp_i(a_1,\ldots,b,\ldots,a_{n_f})$;
				\item
				if $\epsilon_f(i) = \partial$, then $f(a_1,\ldots,a_i,\ldots a_{n_f})\leq b$ iff $a_i\leq^\partial f^\sharp_i(a_1,\ldots,b,\ldots,a_{n_f})$.
			\end{itemize}
			\item for every $g\in \mathcal{G}'$ s.t.\ $n_g\geq 1$, any $a_1,\ldots,a_{n_g}\in L$ and $b\in L$, and each $1\leq i\leq n_g$,
			\begin{itemize}
				\item if $\epsilon_g(i) = 1$, then $b\leq g(a_1,\ldots,a_i,\ldots a_{n_g})$ iff $g^\flat_i(a_1,\ldots,b,\ldots,a_{n_g})\leq a_i$.
				\item
				if $\epsilon_g(i) = \partial$, then $b\leq g(a_1,\ldots,a_i,\ldots a_{n_g})$ iff $g^\flat_i(a_1,\ldots,b,\ldots,a_{n_g})\leq^\partial a_i$.
			\end{itemize}
		\end{enumerate}
		It is also routine to prove using the Lindenbaum-Tarski construction that $\mathbf{L}_\mathrm{LE}^*$ (as well as any of its sound axiomatic extensions) is sound and complete w.r.t.\ the class of  expanded $\mathcal{L}_\mathrm{LE}$-algebras (w.r.t.\ the suitably defined equational subclass, respectively). 

\subsection{Canonical extensions, constructively}
Canonical extensions provide a purely algebraic encoding of Stone-type dualities, and indeed, the existence of the canonical extensions of  the best-known varieties of LEs can be proven via  preexisting dualities. However, alternative, purely algebraic constructions are available, such as those of \cite{GeHa01,DunnGP05}. These constructions are in fact more general, in that their definition does not rely on principles such as Zorn's lemma. In what follows we will adapt them to the setting of LEs introduced above.
\begin{defi}\label{Def:Canon:Ext}
Let $\bba$ be a (bounded) sublattice of a complete lattice $\bba'$.
\begin{enumerate}
\item  $\bba$ is {\em dense} in $\bba'$ if every element of $\bba'$ can be expressed both as a join of meets and as
a meet of joins of elements from $\bba$.
\item $\bba$ is {\em compact} in $\bba'$ if, for all $S, T \subseteq \bba$, if $\bigwedge S\leq \bigvee T$ then $\bigwedge S'\leq \bigvee T'$ for some finite $S'\subseteq S$ and $T'\subseteq T$.
\item The {\em canonical extension} of a lattice $\bba$ is a complete lattice $\bbas$ containing $\bba$
as a dense and compact sublattice.
\end{enumerate}
\end{defi}
 Let $\kbbas$ and $\obbas$ denote the meet-closure and the join-closure of $\bba$ in $\bbas$ respectively.  The elements of $\kbbas$ are referred to as {\em closed} elements, and elements of $\obbas$ as {\em open} elements.

\begin{thm}[Propositions 2.6 and 2.7 in \cite{GeHa01}]
The canonical extension of a bounded lattice $\bba$ exists and is unique up to any isomorphism fixing $\bba$.
\end{thm}
\begin{proof}
We expand on the existence, since it is relevant to the present paper. Let $\mathbf{I}$ and $\mathbf{F}$ be the collections of the ideals and filters of $\bba$ respectively. Consider the polarity $(\mathbf{F}, \mathbf{I}, \leq)$, where $F\leq I$ iff $F\cap I \neq \varnothing$ for every $F\in \mathbf{F}$ and $I\in \mathbf{I}$. As is well known (cf.\ \cite{DaPr90}), this polarity induces a Galois connection $(u: \mathcal{P}(\mathbf{F})\to \mathcal{P}(\mathbf{I}), \ell: \mathcal{P}(\mathbf{I})\to \mathcal{P}(\mathbf{F}))$, with  $u$ and $\ell$ defined by the  assignments $\mathcal{X}\mapsto \{I \mid F\leq I \mbox{ for all } F\in \mathcal{X}\}$, and $\mathcal{Y}\mapsto \{F \mid F\leq I \mbox{ for all } I\in \mathcal{Y}\}$, respectively. Hence the maps $\ell\circ u$ and $u\circ \ell$ are closure operators on $\mathcal{P}(\mathbf{F})$ and $\mathcal{P}(\mathbf{I})$ respectively. The collections of Galois-stable sets of $\ell\circ u$ and $u\circ \ell$ form complete $\bigcap$-subsemilattices $\mathcal{G}_F$ and $\mathcal{G}_I$ of $\mathcal{P}(\mathbf{F})$ and $\mathcal{P}(\mathbf{I})$ respectively. These semilattices  are then complete lattices, and are dually order-isomorphic to each other via the appropriate restrictions of $u$ and $\ell$. The maps $\bba\to \mathcal{G}_F$ and $\bba\to \mathcal{G}_I$ defined by the assignments $a\mapsto \ell \circ u(a{\uparrow})$ and $a\mapsto u\circ \ell(a{\downarrow})$ are dense and compact order-embeddings of $\bba$.
\end{proof}
In meta-theoretic settings in which Zorn's lemma is available, the fact that $\mathbf{F}$ and $\mathbf{I}$ are closed under taking unions of $\subseteq$-chains guarantees that the canonical extension of a lattice $\bba$ is a {\em perfect} lattice. That is, in addition to being complete,
%
is both completely join-generated by the set $\jir$ of the completely
join-irreducible elements of $\bbas$, and completely meet-generated by the set $\mir$ of
the completely meet-irreducible elements of $\bbas$. In our present, constructive setting, canonical extensions are not perfect in general, since in general they do not have `enough' join-irreducibles and meet-irreducibles, as specified above.

The canonical extension of an LE $\bba$ will be defined as a suitable expansion of the canonical extension of the underlying lattice of $\bba$.
Before turning to this definition, recall that
taking the canonical extension of a lattice  commutes with
taking order duals and products, namely:
${(\bba^\partial)}^\delta = {(\bbas)}^\partial$ and ${(\bba_1\times \bba_2)}^\delta = \bba_1^\delta\times \bba_2^\delta$ (cf.\ \cite[Theorem 2.8]{DunnGP05}).
Hence,  ${(\bba^\partial)}^\delta$ can be  identified with ${(\bbas)}^\partial$,  ${(\bba^n)}^\delta$ with ${(\bbas)}^n$, and
${(\bba^\varepsilon)}^\delta$ with ${(\bbas)}^\varepsilon$ for any order type $\varepsilon$. Thanks to these identifications,
in order to extend operations of any arity and which are monotone or antitone in each coordinate from a lattice $\bba$ to its canonical extension, treating the case
of {\em monotone} and {\em unary} operations suffices:
\begin{defi}
For every unary, order-preserving operation $f : \bba \to \bba$, the $\sigma$-{\em extension} of $f$ is defined firstly by declaring, for every $k\in \kbbas$,
$$f^\sigma(k):= \bigwedge\{ f(a)\mid a\in \bba\mbox{ and } k\leq a\},$$ and then, for every $u\in \bbas$,
$$f^\sigma(u):= \bigvee\{ f^\sigma(k)\mid k\in \kbbas\mbox{ and } k\leq u\}.$$
The $\pi$-{\em extension} of $f$ is defined firstly by declaring, for every $o\in \obbas$,
$$f^\pi(o):= \bigvee\{ f(a)\mid a\in \bba\mbox{ and } a\leq o\},$$ and then, for every $u\in \bbas$,
$$f^\pi(u):= \bigwedge\{ f^\pi(o)\mid o\in \obbas\mbox{ and } u\leq o\}.$$
\end{defi}
Note that the operations $\sigma$ and $\pi$ are order dual. It is easy to see that the $\sigma$- and $\pi$-extensions of $\varepsilon$-monotone maps are $\varepsilon$-monotone. More remarkably,the $\sigma$-extension of a map which sends finite (resp.\ finite nonempty) joins or meets in the domain to finite (resp.\ finite nonempty) joins in the codomain sends {\em arbitrary} (resp.\ {\em arbitrary} nonempty) joins
or meets in the domain to {\em arbitrary} (resp.\ {\em arbitrary} nonempty) joins in the codomain. Dually, the $\pi$-extension of a map which sends finite (resp.\ finite nonempty) joins or meets in the domain to finite (resp.\ finite nonempty) meets in the
codomain sends {\em arbitrary} (resp.\ {\em arbitrary} nonempty) joins
or meets in the domain to {\em arbitrary} (resp.\ {\em arbitrary} nonempty) meets in the codomain (cf.\ \cite[Lemma 4.6]{GeHa01}; notice that the proof given there holds in a constructive meta-theory).
Therefore, depending on the properties of the original operation, it is more convenient to use one or the other extension. This justifies the following
\begin{defi}
The canonical extension of an
$\mathcal{L}_\mathrm{LE}$-algebra $\bbA = (L, \mathcal{F}^\bbA, \mathcal{G}^\bbA)$ is the   $\mathcal{L}_\mathrm{LE}$-algebra
$\bbA^\delta: = (L^\delta, \mathcal{F}^{\bbA^\delta}, \mathcal{G}^{\bbA^\delta})$ such that $f^{\bbA^\delta}$ and $g^{\bbA^\delta}$ are defined as the
$\sigma$-extension of $f^{\bbA}$ and as the $\pi$-extension of $g^{\bbA}$ respectively, for all $f\in \mathcal{F}$ and $g\in \mathcal{G}$.
\end{defi}
The canonical extension of an LE $\bba$ is a {\em quasi-perfect} LE:
\begin{defi}
\label{def:perfect LE}
An LE $\bbA = (L, \mathcal{F}^\bbA, \mathcal{G}^\bbA)$ is quasi-perfect if $L$ is a complete lattice, 
 the following infinitary distribution laws are satisfied for each $f\in \mathcal{F}_n$, $g\in \mathcal{G}_n$, $1\leq i\leq n_f$ and $1\leq j\leq n_g$: for every $S\subseteq L$,
\begin{center}
\begin{tabular}{c c }
$f(x_1,\ldots, \bigvee S, \ldots, x_{n_f}) =\bigvee \{ f(x_1,\ldots, x, \ldots, x_{n_f}) \mid x\in S \}$  & if $\varepsilon_f(i) = 1$\\

$f(x_1,\ldots, \bigwedge S, \ldots, x_{n_f}) =\bigvee \{ f(x_1,\ldots, x, \ldots, x_{n_f}) \mid x\in S \}$  & if $\varepsilon_f(i) = \partial$\\

$g(x_1,\ldots, \bigwedge S, \ldots, x_{n_g}) =\bigwedge \{ g(x_1,\ldots, x, \ldots, x_{n_f}) \mid x\in S \}$  & if $\varepsilon_g(i) = 1$\\

$f(x_1,\ldots, \bigvee S, \ldots, x_{n_g}) =\bigwedge \{ g(x_1,\ldots, x, \ldots, x_{n_f}) \mid x\in S \}$  & if $\varepsilon_g(i) = \partial$,\\

\end{tabular}
\end{center}
and analogous identities hold for every $f\in \mathcal{F}_r$ and  $g\in \mathcal{F}_r$,  restricted to $S\neq \varnothing$.
\end{defi}
Before finishing the present subsection, let us spell out and further simplify the definitions of the extended operations.
First of all, we recall that taking  order-duals interchanges closed and open elements:
$K({(\bbas)}^\partial) = O(\bbas)$ and $O({(\bbas)}^\partial) =\kbbas$;  similarly, $K({(\bba^n)}^\delta) =\kbbas^n$, and $O({(\bba^n)}^\delta) =\obbas^n$. Hence,  $K({(\bbas)}^\epsilon) =\prod_i K(\bbas)^{\epsilon(i)}$ and $O({(\bbas)}^\epsilon) =\prod_i O(\bbas)^{\epsilon(i)}$ for every LE $\bba$ and every order-type $\epsilon$ on any $n\in \mathbb{N}$, where
\begin{center}
\begin{tabular}{cc}
$K(\bbas)^{\epsilon(i)}: =\begin{cases}
K(\bbas) & \mbox{if } \epsilon(i) = 1\\
O(\bbas) & \mbox{if } \epsilon(i) = \partial\\
\end{cases}
$ &
$O(\bbas)^{\epsilon(i)}: =\begin{cases}
O(\bbas) & \mbox{if } \epsilon(i) = 1\\
K(\bbas) & \mbox{if } \epsilon(i) = \partial.\\
\end{cases}
$\\
\end{tabular}
\end{center}
Denoting by $\leq^\epsilon$ the product order on $(\bbas)^\epsilon$, we have for every $f\in \mathcal{F}$, $g\in \mathcal{G}$,  $\overline{u}\in (\bbas)^{n_f}$ and $\overline{v}\in (\bbas)^{n_g}$,
\begin{align*}
  f^\sigma (\overline{k}) & := \bigwedge\{ f( \overline{a})\mid \overline{a}\in (\bbas)^{\epsilon_f}\mbox{ and } \overline{k}\leq^{\epsilon_f} \overline{a}\} \\
  f^\sigma (\overline{u}) & := \bigvee\{ f^\sigma( \overline{k})\mid \overline{k}\in K({(\bbas)}^{\epsilon_f})\mbox{ and } \overline{k}\leq^{\epsilon_f} \overline{u}\} \\
  g^\pi (\overline{o}) & := \bigvee\{ g( \overline{a})\mid \overline{a}\in (\bbas)^{\epsilon_g}\mbox{ and } \overline{a}\leq^{\epsilon_g} \overline{o}\}\\
  g^\pi (\overline{v}) & := \bigwedge\{ g^\pi( \overline{o})\mid \overline{o}\in O({(\bbas)}^{\epsilon_g})\mbox{ and } \overline{v}\leq^{\epsilon_g} \overline{o}\}
%
\end{align*}

Notice that the algebraic completeness of the logics $\mathbf{L}_\mathrm{LE}$ and $\mathbf{L}_\mathrm{LE}^*$ and the canonical embedding of LEs into their canonical extensions immediately give completeness of $\mathbf{L}_\mathrm{LE}$ and $\mathbf{L}_\mathrm{LE}^*$ w.r.t.\ the appropriate class of perfect LEs.

\subsection{Constructive canonical extensions are natural $\mathcal{L}_\mathrm{LE}^*$-algebras}

The aim of the present subsection is showing that the constructive canonical extension of any  $\mathcal{L}_{\mathrm{LE}}$-algebra $\bba$ supports the interpretation of  the  language $\mathcal{L}_\mathrm{LE}^*$ (cf.\ Section \ref{ssec:expanded tense language}). This will be done in two steps: 
Firstly, we need to verify that taking the  normalization of any $f\in \mathcal{F}_r^{\bbas}$ and $g\in \mathcal{G}_r^{\bbas}$ commutes with taking  canonical extensions:

\begin{lem}\label{lemma: f and diamond f coincide on non bottom els}
For all $f\in \mathcal{F}_r$ and $g\in \mathcal{G}_r$,
\begin{enumerate}
\item if $\varepsilon_f = 1$, then $\Diamond_f^\sigma u = \Diamond_{f^\sigma} u$ for every $u\in \bbas$.
\item if $\varepsilon_g = 1$, then $\Box_g^\pi u = \Box_{g^\pi} u$ for every $u\in \bbas$.
\item if $\varepsilon_f = \partial$, then ${\lhd}_f^\sigma u = {\lhd}_{f^\sigma} u$ for every $u\in \bbas$.
\item if $\varepsilon_g = \partial$, then ${\rhd}_g^\pi u = {\rhd}_{g^\pi} u$ for every $u\in \bbas$.
\end{enumerate}
\end{lem}
\begin{proof}
1. By nonempty join-preservation, it is enough to show that if $k\in \kbbas$ and $k\neq \bot$, then $\Diamond_f^\sigma k = f^\sigma(k) = :\Diamond_{f^\sigma} u$. By denseness,  $\{k\mid k\in \kbbas \mbox{ and } k\leq u\}\neq \varnothing$. Recalling that in $\bbas$ the interpretation of any $f\in \mathcal{F}_r$ with $\epsilon_f = 1$ preserves arbitrary nonempty joins, the following chain of identities holds:
\begin{center}
\begin{tabular}{r c ll}
$\Diamond_f^\sigma k $ & $=$ & $\bigwedge\{\Diamond_f a \mid a\in \bba\mbox{ and } k\leq a\}$ & (def.\ of $\sigma$-extension)\\
& $=$ & $\bigwedge\{f(a) \mid a\in \bba\mbox{ and } k\leq a\}$ & ($k\neq \bot$)\\
& $=$ & $f^\sigma(k)$. & (def.\ of $\sigma$-extension)\\
\end{tabular}
\end{center}
 The remaining items are order-variants and hence their proof is omitted.
\end{proof}

Since $\bbas$ is a complete lattice, by general and well known order-theoretic facts, all the connectives in $\mathcal{F}'\supseteq \mathcal{F}_n$ and in $\mathcal{G}'\supseteq \mathcal{G}_n$ (cf.\ Subsection \ref{ssec:expanded tense language}), have (coordinatewise) {\em adjoints}. This implies that the constructive canonical extension of any $\mathcal{L}_{\mathrm{LE}}$-algebra supports the interpretation of the connectives in $\mathcal{F}^*$ and in $\mathcal{G}^*$ (cf.\ Subsection \ref{ssec:expanded tense language}), and can hence be endowed with the structure of an $\mathcal{L}_{\mathrm{LE}}^*$-algebra, as required.

\subsection{The language of constructive ALBA for LEs}\label{Subsec:Expanded:Land}

The expanded language  manipulated by ALBA includes the $\mathcal{L}_{\mathrm{LE}}^*$-connectives, as well as a denumerably infinite set of sorted variables $\mathsf{NOM}$ called {\em nominals}, and a denumerably infinite set of
sorted variables $\mathsf{CO\text{-}NOM}$, called {\em co-nominals}. The elements of $\mathsf{NOM}$ will be denoted with with $\nomi, \nomj$, possibly indexed, and those of
$\mathsf{CO\text{-}NOM}$ with $\cnomm, \cnomn$, possibly indexed.
While in the non-constructive setting nominals and co-nominals range over the completely join-irreducible and the completely meet-irreducible elements of perfect LEs, respectively, in the present, constructive setting, nominals and co-nominals will be interpreted as elements of $\kbbas$ and  $\obbas$, respectively.

Let us introduce the expanded language formally: the \emph{formulas} $\phi$ of $\mathcal{L}_\mathrm{LE}^{+}$ are given by the following recursive definition:
\begin{center}
\begin{tabular}{r c |c|c|c|c|c|cccc c c c}
$\phi ::= $ &$\nomj$ & $\cnomm$ & $\psi$ & $\phi\wedge\phi$ & $\phi\vee\phi$ & $f(\overline{\phi})$ &$g(\overline{\phi})$
\end{tabular}
\end{center}
with $\psi  \in \mathcal{L}_\mathrm{LE}$, $\nomj \in \mathsf{NOM}$ and $\cnomm \in \mathsf{CO\text{-}NOM}$,  $f\in \mathcal{F}^*$ and $g\in \mathcal{G}^*$.

As in the case of $\mathcal{L}_\mathrm{LE}$, we can form inequalities and quasi-inequalities based on $\mathcal{L}_\mathrm{LE}^{+}$.
%
Formulas, inequalities and quasi-inequalities in $\mathcal{L}_\mathrm{LE}^{+}$ not containing any propositional variables (but possibly containing nominals and co-nominals) will be called \emph{pure}.


%
%
%
%
In the previous section, we showed that constructive canonical extensions of ${\mathcal{L}_\mathrm{LE}}$-algebras can be naturally endowed with the structure of ${\mathcal{L}_\mathrm{LE}^*}$-algebras. Building on this fact, we can use constructive canonical extensions of ${\mathcal{L}_\mathrm{LE}}$-algebras as a semantic environment for the language ${\mathcal{L}_\mathrm{LE}^+}$ as follows.
If $\bba$ is an $\mathcal{L}_{\mathrm{LE}}$-algebra,  then an \emph{assignment} for ${\mathcal{L}_\mathrm{LE}^+}$ on $\bbas$ is a map $v: \mathsf{PROP} \cup \mathsf{NOM} \cup \mathsf{CO\mbox{-}NOM} \rightarrow \bbas$ sending propositional variables to elements of $\bbas$, sending nominals to $\kbbas$ and co-nominals to $\obbas$. An \emph{admissible assignment}\label{admissible:assignment} for ${\mathcal{L}_\mathrm{LE}^+}$  on $\bbas$ is an assignment $v$ for ${\mathcal{L}_\mathrm{LE}^+}$ on $\bbas$, such that $v(p) \in \bba$ for each $p \in \mathsf{PROP}$. In other words, the assignment $v$ sends propositional variables to elements of the subalgebra $\bba$, while nominals and co-nominals get sent to closed and open elements of $\bbas$, respectively. This means that the value of ${\mathcal{L}_\mathrm{LE}}$-terms under an admissible assignment will belong to $\bba$, whereas ${\mathcal{L}_\mathrm{LE}^+}$-terms in general will not.

\section{Inductive and Sahlqvist Inequalities}\label{Sec:Ind:Sahl:Ineq}

In this section we  introduce the notion of inductive  $\mathcal{L}_\mathrm{LE}(\mathcal{F}, \mathcal{G})$-inequalities, which, as discussed in the introduction, is one of the main tools of unified correspondence. Its characteristic feature is being parametric in every signature, and being formulated purely in terms of the order-theoretic properties of the interpretation of the logical connectives. However, in the present setting, unlike most of the others, the parametric signature witnesses the coexistence of normal and regular connectives, and hence we warn the reader that Definition \ref{Inducive:Ineq:Def} below, although verbatim the same as the analogous one given in e.g.\ \cite{ConPal13}, is actually a proper generalization of e.g.\ \cite[Definition 3.4]{ConPal13}, since the  definition of good branch on which it depends is not the same as the one given in \cite{ConPal13}. We deliberately keep the same terminology (good branch, inductive) and reduce the notational changes to a minimum, since the conceptual principles motivating these  notions  are the same in every setting, and the present definitions extend the ones relative to all more restrictive settings and project onto each of them. The constructive canonicity of inductive inequalities  will follow from their ALBA-reducibility  and the fact that all ALBA-reducible inequalities  are constructively canonical.

		\begin{defi}[Signed Generation Tree]
			\label{def: signed gen tree}
			The \emph{positive} (resp.\ \emph{negative}) {\em generation tree} of any $\mathcal{L}_\mathrm{LE}$-term $s$ is defined by labelling the root node of the generation tree of $s$ with the sign $+$ (resp.\ $-$), and then propagating the labelling on each remaining node as follows:
			\begin{itemize}
				\item For any node labelled with $ \lor$ or $\land$, assign the same sign to its children nodes.
				\item For any node labelled with $h\in \mathcal{F}\cup \mathcal{G}$ of arity $n_h\geq 1$, and for any $1\leq i\leq n_h$, assign the same (resp.\ the opposite) sign to its $i$th child node if $\varepsilon_h(i) = 1$ (resp.\ if $\varepsilon_h(i) = \partial$).
			\end{itemize}
			Nodes in signed generation trees are \emph{positive} (resp.\ \emph{negative}) if are signed $+$ (resp.\ $-$).
		\end{defi}
		
		Signed generation trees will be mostly used in the context of term inequalities $s\leq t$. In this context we will typically consider the positive generation tree $+s$ for the left-hand side and the negative one $-t$ for the right-hand side. We will also say that a term-inequality $s\leq t$ is \emph{uniform} in a given variable $p$ if all occurrences of $p$ in both $+s$ and $-t$ have the same sign, and that $s\leq t$ is $\epsilon$-\emph{uniform} in a (sub)array $\vec{p}$ of its variables if $s\leq t$ is uniform in $p$, occurring with the sign indicated by $\epsilon$, for every\footnote{\label{footnote:uniformterms}The following observation will be used at various points in the remainder of the present paper: if a term inequality $s(\vec{p},\vec{q})\leq t(\vec{p},\vec{q})$ is $\epsilon$-uniform in $\vec{p}$ (cf.\ discussion after Definition \ref{def: signed gen tree}), then the validity of $s\leq t$ is equivalent to the validity of $s(\overrightarrow{\top^{\epsilon(i)}},\vec{q})\leq t(\overrightarrow{\top^{\epsilon(i)}},\vec{q})$, where $\top^{\epsilon(i)}=\top$ if $\epsilon(i)=1$ and $\top^{\epsilon(i)}=\bot$ if $\epsilon(i)=\partial$.} $p$ in $\vec{p}$.
		
		For any term $s(p_1,\ldots p_n)$, any order type $\epsilon$ over $n$, and any $1 \leq i \leq n$, an \emph{$\epsilon$-critical node} in a signed generation tree of $s$ is a leaf node $+p_i$ with $\epsilon_i = 1$ or $-p_i$ with $\epsilon_i = \partial$. An $\epsilon$-{\em critical branch} in the tree is a branch from an $\epsilon$-critical node. The intuition, which will be built upon later, is that variable occurrences corresponding to $\epsilon$-critical nodes are \emph{to be solved for}, according to $\epsilon$.
		
		For every term $s(p_1,\ldots p_n)$ and every order type $\epsilon$, we say that $+s$ (resp.\ $-s$) {\em agrees with} $\epsilon$, and write $\epsilon(+s)$ (resp.\ $\epsilon(-s)$), if every leaf in the signed generation tree of $+s$ (resp.\ $-s$) is $\epsilon$-critical.
		In other words, $\epsilon(+s)$ (resp.\ $\epsilon(-s)$) means that all variable occurrences corresponding to leaves of $+s$ (resp.\ $-s$) are to be solved for according to $\epsilon$. We will also write $+s'\prec \ast s$ (resp.\ $-s'\prec \ast s$) to indicate that the subterm $s'$ inherits the positive (resp.\ negative) sign from the signed generation tree $\ast s$. Finally, we will write $\epsilon(\gamma) \prec \ast s$ (resp.\ $\epsilon^\partial(\gamma_h) \prec \ast s$) to indicate that the signed subtree $\gamma$, with the sign inherited from $\ast s$, agrees with $\epsilon$ (resp.\ with $\epsilon^\partial$).
		\begin{defi}
			\label{def:good:branch}
			Nodes in signed generation trees will be called \emph{$\Delta$-adjoints}, \emph{syntactically additive coordinatewise (SAC)}, \emph{syntactically right residual (SRR)}, and \emph{syntactically multiplicative in the product (SMP)}\footnote{This division reflects the order-theoretic properties of the interpretation of these connectives (join/meet preservation/reversal, adjunction, residuation) which will be exploited in the reduction phase of the constructive ALBA-algorithm, introduced below in Section \ref{sec:constructive ALBA}}, according to the specification given in Table~\ref{Join:and:Meet:Friendly:Table}.
			A branch in a signed generation tree $\ast s$, with $\ast \in \{+, - \}$, is called a \emph{good branch} if it is the concatenation of two paths $P_1$ and $P_2$, one of which may possibly be of length $0$, such that $P_1$ is a path from the leaf consisting (apart from variable nodes) only of PIA-nodes, and $P_2$ consists (apart from variable nodes) only of Skeleton-nodes. 
A branch is \emph{excellent} if it is good and in $P_1$ there are only SMP-nodes. A good branch is \emph{Skeleton} if the length of $P_1$ is $0$ (hence Skeleton branches are excellent), and  is {\em SAC}, or {\em definite}, if  $P_2$ only contains SAC nodes.
			\begin{table}[h]
				\begin{center}
                \bgroup
                \def\arraystretch{1.2}
					\begin{tabular}{| c | c |}
						\hline
						Skeleton &PIA\\
						\hline
						$\Delta$-adjoints &  (SMP) \\
						\begin{tabular}{ c c c c c c}
							$+$ &$\phantom{\vee}$ &$\vee$ &$\phantom{\lhd}$ & &\\
							$-$ & &$\wedge$\\
							\hline
						\end{tabular}
						&
						\begin{tabular}{c c c c }
							$+$ &$\wedge$ &$g\in \mathcal{G}$ & with $n_g = 1$ \\
							$-$ &$\vee$ &$f\in \mathcal{F}$ & with $n_f = 1$ \\
							\hline
						\end{tabular}
						\\
						 (SAC) & (SRR)\\
						\begin{tabular}{c c c c }
							$+$ &  &$f\in \mathcal{F}$ &\\
							$-$ &  &$g\in \mathcal{G}$ &  \\
						\end{tabular}
						&\begin{tabular}{c c c c}
							$+$ & &$g\in \mathcal{G}_n$ & with $n_g \geq 2$\\
							$-$ &  &$f\in \mathcal{F}_n$ & with $n_f \geq 2$\\
						\end{tabular}
						\\
						\hline
					\end{tabular}
                \egroup
				\end{center}
				\caption{Skeleton and PIA nodes for $\mathrm{LE}$.}\label{Join:and:Meet:Friendly:Table}
			\end{table}
		\end{defi}

		\begin{defi}[Inductive inequalities]
			\label{Inducive:Ineq:Def}
			For any order type $\epsilon$ and any irreflexive and transitive relation $\Omega$ on $p_1,\ldots p_n$, the signed generation tree $*s$ $(* \in \{-, + \})$ of a term $s(p_1,\ldots p_n)$ is \emph{$(\Omega, \epsilon)$-inductive} if
			\begin{enumerate}
				\item for all $1 \leq i \leq n$, every $\epsilon$-critical branch with leaf $p_i$ is good (cf.\ Definition \ref{def:good:branch});
				\item every $m$-ary SRR-node occurring in the critical branch is of the form $ h(\gamma_1,\dots,\gamma_{j-1},\beta, \allowbreak \gamma_{j+1}\ldots,\gamma_m)$, where for any $\ell \in\{1,\ldots,m\}\setminus j$: 
\begin{enumerate}
\item $\epsilon^\partial(\gamma_\ell) \prec \ast s$ (cf.\ discussion before Definition \ref{def:good:branch}), and
%
\item $p_k \Omega p_i$ for every $p_k$ occurring in $\gamma_\ell$. 
\end{enumerate}
	\end{enumerate}
			We will refer to $\Omega$ as the \emph{dependency order} on the variables. An inequality $s \leq t$ is \emph{$(\Omega, \epsilon)$-inductive} if the signed generation trees $+s$ and $-t$ are $(\Omega, \epsilon)$-inductive. An inequality $s \leq t$ is \emph{inductive} if it is $(\Omega, \epsilon)$-inductive for some $\Omega$ and $\epsilon$.
		\end{defi}
		
		In what follows, we will find it useful to refer to formulas $\phi$ such that only PIA nodes occur in $+\phi$ (resp.\ $-\phi$) as {\em positive} (resp.\ {\em negative}) {\em PIA-formulas}, and to formulas $\xi$ such that only Skeleton nodes occur in $+\xi$ (resp.\ $-\xi$) as {\em positive} (resp.\ {\em negative}) {\em Skeleton-formulas}\label{page: positive negative PIA}.

\begin{defi}\label{Sahlqvist:Ineq:Def}
Given an order type $\epsilon$, the signed generation tree $\ast s$, $\ast \in \{-, + \}$, of a term $s(p_1,\ldots p_n)$ is \emph{$\epsilon$-Sahlqvist} if every $\epsilon$-critical branch is excellent. An inequality $s \leq t$ is \emph{$\epsilon$-Sahlqvist} if the trees $+s$ and $-t$ are both $\epsilon$-Sahlqvist.  An inequality $s \leq t$ is \emph{Sahlqvist} if it is $\epsilon$-Sahlqvist for some $\epsilon$.
\end{defi}
\begin{rem}\label{Rem:Suzuki}
In \cite{Suzuki11a}, Suzuki proves a constructive canonicity result in a setting of lattice expansions  given by  the signature $(h, k,\neg, l, r, c)$ such that $c$ is a constant, $-h$ and $+k$ are SMP of arbitrary arity and order-type,  $+l$ and $-r$ are SAC of arbitrary arity $n_l = n_r$ and order-type, and $\neg$ is a unary connective s.t.\ $\epsilon_{\neg} = \partial$, which is an involution ($\neg\neg x = x$) and moreover $+\neg$ and $-\neg$ are both SAC. In some cases, $l$ and $r$ are intended to be residual to each other in some coordinate, thus being promoted from SAC to SRR in that coordinate.
\[ \cup\mbox{-terms}\quad s:: = b  \mid s\vee s\mid h(\overline{s}^{\varepsilon_h}) \mid l(\overline{s}^{\varepsilon_l})\]
\[ \cap\mbox{-terms}\quad  t:: = d\mid  t\wedge t \mid k(\overline{t}^{\varepsilon_k}) \mid r(\overline{t}^{\varepsilon_r})\]
\[ b\mbox{-terms}\quad b:: = p\mid \top \mid \bot \mid b\wedge b\mid k(\overline{b}^{\varepsilon_k})\mid  r(\overline{x}^{\varepsilon_r(i)})\] 
\[ d\mbox{-terms}\quad  d:: = p\mid \top \mid \bot\mid d\vee d\mid h(\overline{d}^{\varepsilon_h}) \mid l(\overline{y}^{\varepsilon_l(i)}),\] 
where
\begin{itemize}
\item[--] the $i$th element of the $n_h$-tuple $\overline{s}^{\varepsilon_h}$ (resp.\ $\overline{d}^{\varepsilon_h}$)  is a $\cup$-term (resp.\ a $d$-term) if $\varepsilon_h(i) = 1$ and is a $\cap$-term (resp.\ a $b$-term) if $\varepsilon_h(i) = \partial$;
\item[--] the $i$th element of the $n_l$-tuple $\overline{s}^{\varepsilon_l}$   is a $\cup$-term  if $\varepsilon_l(i) = 1$ and is a $\cap$-term  if $\varepsilon_l(i) = \partial$;
\item[--] the $i$th element of the $n_k$-tuple $\overline{t}^{\varepsilon_k}$ (resp.\ $\overline{b}^{\varepsilon_k}$) is a $\cap$-term (resp.\ \ $b$-term) if $\varepsilon_k(i) = 1$ and is a $\cup$-term (resp.\ a $d$-term) if $\varepsilon_g(i) = \partial$;
\item[--] the $i$th element of the $n_r$-tuple $\overline{t}^{\varepsilon_r}$   is a $\cap$-term  if $\varepsilon_r(i) = 1$ and is a $\cup$-term  if $\varepsilon_r(i) = \partial$;
\item[--] $\overline{x}^{\varepsilon_r(i)}$ is an $n_r$-tuple such that every coordinate but the $i$th is a constant term, and the $i$th element is a $b$-term if $\varepsilon_r(i) = 1$, and  is a $d$-term if $\varepsilon_r(i) = \partial$;
\item[--] $\overline{y}^{\varepsilon_l(i)}$ is an $n_l$-tuple such that every coordinate but the $i$th is a constant term, and the $i$th element is a $d$-term if $\varepsilon_l(i) = 1$, and  is a $b$-term if $\varepsilon_l(i) = \partial$.
    \end{itemize}
The inequalities proven to be canonical in \cite[Theorem 5.10]{Suzuki13}   are of the form $\phi\leq \psi$, such that  \[\phi = s(\overline{x},  \overline{a_1(\overline{x})}/\overline{z})\quad \quad \psi = t(\overline{x},  \overline{a_2(\overline{x})}/\overline{z})\] where $s(\overline{x}, \overline{z})$ is a $\cup$-term, $t(\overline{x}, \overline{z})$ is a $\cap$-term, and  there exists some order-type $\epsilon$ on $\overline{x}$ such that $\epsilon^{\partial}(a_1(\overline{x}))\prec +\phi$ and $\epsilon^{\partial}(a_2(\overline{x}))\prec -\psi$ for each $a_1$ in $\overline{a_1}$ and each $a_2$ in $\overline{a_2}$.

We would now like to compare these inequalities with the Sahlqvist and inductive inequalities as defined above. The first point to note is that we do not include any non-unary SMP nodes other than $+\wedge$ and $-\vee$ in Table~\ref{Join:and:Meet:Friendly:Table}. The reason for this is that, apart from conjunction and disjunction, we are not aware of any non-unary connective occurring in a well-known logic, the interpretation of which has this order-theoretic property. This being said, it is possible to lift the restriction that $n_g = 1$ and $n_f = 1$ in the SMP cell of Table~\ref{Join:and:Meet:Friendly:Table} without affecting any of the results in the present paper. Doing so does mean we need to add n-ary versions of the residuation rules for regular connectives to ALBA, but this is straightforward and will be discussed in Remark \ref{Rem:Additional:Rule}.

Given this modification to Table~\ref{Join:and:Meet:Friendly:Table}, it is not difficult to see that the inequality $\phi\leq \psi$ described above falls under the definition of $\epsilon$-Sahlqvist inequalities (cf.\ Definition \ref{Sahlqvist:Ineq:Def}), when one takes into account the following considerations.  Firstly, any operation that is multiplicative in the product (as always, modulo some order-type), is also multiplicative coordinatewise, therefore all SMP nodes are SAC (of course, preservation of empty meets in the product does not imply preservation of empty meets coordinatewise, but that does not concern us now). Secondly, it follows that taking any operation that is coordinatewise multiplicative and holding all but one of its coordinates constant turns it, in effect, into one that is multiplicative in the (now one-fold) product. Therefore, instantiating all arguments of a SAC node save one with constants turns it into an SMP node.

Taking this into account, the trees $+b$ and $-d$ can be recognized as consisting entirely of PIA-nodes, and in particular  entirely of SMP nodes. Further, the generation trees $+s$ and $-t$ of $\cup$- and $\cap$-terms are constructed by taking trees consisting entirely of skeleton nodes and inserting subtrees $+b \prec +s, -t$ and $-d \prec +s, -t$ at leaves. Thus \emph{all} branches in $+s$ and $-t$ are excellent. Substituting $\overline{a_1(\overline{x})}$ and $\overline{a_2(\overline{x})}$ into $s$ and $t$ as indicated possibly introduces non-good branches into $+ s(\overline{x},  \overline{a_1(\overline{x})}/\overline{z})$ and $- t(\overline{x},  \overline{a_2(\overline{x})}/\overline{z})$, but these will be non-critical according to $\epsilon$ and therefore both trees will be $\epsilon$-Sahlqvist.


The same analysis applies to the scope of the Sahlqvist canonicity result of Ghilardi and Meloni \cite{GhMe97} (cf.\ \cite[Remark 12]{CoPaSoZh15}). Indeed, 
Suzuki's treatment   extracts the syntactic definition from Ghilardi-Meloni's order-theoretic insights and transfers it to the setting of general lattice expansions.
\end{rem}

\begin{exa}\label{ex: frege as type 3}
			Let $\mathcal{F} = \varnothing = \mathcal{G}_r$, and  $\mathcal{G}_n = \{\rhu\}$, with $\rhu$ binary and of order-type $(\partial, 1)$.
			As observed in \cite{ConPal12}, the Frege inequality
			
			\[p\rhu(q\rhu r)\leq (p\rhu q)\rhu (p\rhu r)\] 
			
			\noindent is not Sahlqvist for any order type, but is $(\Omega, \epsilon)$-inductive, e.g.\ for $r <_\Omega p <_\Omega q$ and $\epsilon(p, q, r) =(1, 1, \partial)$,  as can be seen from the signed generation trees below. 		
		
\begin{center}
				\begin{tikzpicture}
				\tikzstyle{level 1}=[level distance=1cm, sibling distance=2.5cm]
				\tikzstyle{level 2}=[level distance=1cm, sibling distance=1.5cm]
				\tikzstyle{level 3}=[level distance=1 cm, sibling distance=1.5cm]

				\node[PIA] {$+\rightharpoonup$}
				child{node{$-p$}}
				child{node[PIA]{$+\rightharpoonup$}
					child{node{$-q$}}
					child{node{$+r$}}}
				;
				
				\end{tikzpicture}
                \hspace{1cm}
                \begin{tikzpicture}
                \node (b) at (0, 0) [auto] [] {$\phantom{x}$};
				\node (p) at (0, 1.5) [auto] [] {$\leq$};
                \end{tikzpicture}
                \hspace{1cm}
				\begin{tikzpicture}
				\tikzstyle{level 1}=[level distance=1cm, sibling distance=2.5cm]
				\tikzstyle{level 2}=[level distance=1cm, sibling distance=1.5cm]
				\tikzstyle{level 3}=[level distance=1 cm, sibling distance=1.5cm]
				\node[Ske]{$-\rightharpoonup$}
				child{node[PIA]{$+\rightharpoonup$}
					child{node{$-p$}}
					child{node{\circled{$+q$}}}}
				child{node[Ske]{$-\rightharpoonup$}
					child{node{\circled{$+p$}}}
					child{node{\circled{$-r$}}}}
				;
				\end{tikzpicture}
		\end{center}
In the picture above, the circled variable occurrences are the $\epsilon$-critical ones, the doubly circled nodes are the Skeleton ones and the single-circle ones are PIA. In the intuitionistic setting of \cite{GhMe97}, the Frege inequality with $\rhu$  interpreted as Heyting implication is a validity, and hence is trivially canonical. In the lattice/poset-based setting of \cite{Suzuki11a,Suzuki13}, the Frege inequality is not a validity, but because proposition variables occur on both coordinates of SRR nodes, the Frege inequality falls outside  the fragment accounted for in \cite{Suzuki11a,Suzuki13}. In Example \ref{ex: ALBA run on frege}, we will give a successful execution of ALBA on the Frege inequality.
\end{exa}

\section{Constructive ALBA}\label{sec:constructive ALBA}
In this section, we present the second main tool of unified correspondence, namely the algorithm ALBA, in a  version suitable to  the constructive setting. Like all versions of ALBA, this one proceeds in three stages: preprocessing, reduction/elimination, and output. Since the current setting is general (i.e.\ non necessarily distributive) lattice expansions, the algorithm discussed here is more similar to the one introduced in \cite{ConPal13} than to versions based on distributive lattices.  However, unlike the version of \cite{ConPal13}, the constructive ALBA includes special `residuation rules' for regular connectives along with residuation rules proper. This is due to the fact that we need to accommodate a signature with different order-theoretic properties than the one in \cite{ConPal13}, and, in particular, the residuation rules for normal connectives are not sound when applied to regular connectives. The other rules, although syntactically the same as in \cite{ConPal13},  take on  a different semantic meaning  in the present setting,  as we will see in the next section.

\medskip
  ALBA takes   $\mathcal{L}_\mathrm{LE}$-inequalities $\phi \leq \psi$ as input and then proceeds in three stages. The first stage preprocesses $\phi \leq \psi$ by eliminating all uniformly  occurring propositional variables, and applying distribution and splitting rules exhaustively. This produces a finite set of inequalities, $\phi'_i \leq \psi'_i$, $1 \leq i \leq n$.

Now ALBA forms the \emph{initial quasi-inequalities} $\bigamp S_i \Rightarrow \sf{Ineq}_i$, compactly represented as tuples $(S_i, \sf{Ineq}_i)$ referred as \emph{systems},  with each $S_i$ initialized to the empty set and $\sf{Ineq}_i$ initialized to $\phi'_i \leq \psi'_i$.

The second stage (called the reduction  stage) transforms $S_i$ and $\mathsf{Ineq}_i$ through the application of transformation rules, which are listed below. The aim is to eliminate all propositional variables from $S_i$ and $\mathsf{Ineq}_i$ in favour of  nominals and co-nominals. A system for which this has been done will be called \emph{pure} or \emph{purified}. The actual eliminations are effected through the Ackermann-rules, while the other rules are used to bring $S_i$ and $\mathsf{Ineq}_i$ into the appropriate shape which make these applications possible. Once all propositional variables have been eliminated, this phase terminates and returns the pure quasi-inequalities $\bigamp S_i \Rightarrow \mathsf{Ineq}_i$.

The third stage either reports failure if some system could not be purified, or else returns the conjunction of the pure quasi-inequalities $\bigamp S_i \Rightarrow \mathsf{Ineq}_i$, which we denote by $\mathsf{ALBA}(\phi \leq \psi)$.

We now outline each of the three stages in more detail:

\subsection{Stage 1: Preprocessing and initialization} ALBA receives an $\mathcal{L}_{\mathrm{LE}}$-inequality $\phi \leq \psi$ as input. It applies the following {\bf rules for elimination of monotone variables}  to $\phi \leq \psi$ exhaustively, in order to eliminate any propositional variables which occur uniformly:

\begin{prooftree}
\AxiomC{$\alpha(p) \leq \beta(p)$}\UnaryInfC{$\alpha(\bot) \leq \beta(\bot)$}
\AxiomC{$\gamma(p) \leq \delta(p)$}\UnaryInfC{$\gamma(\top) \leq \delta(\top)$}
\noLine\BinaryInfC{}
\end{prooftree}
for $\alpha(p) \leq \beta(p)$ positive and $\gamma(p) \leq \delta(p)$ negative in $p$, respectively (see footnote \ref{footnote:uniformterms}).

Next, ALBA exhaustively distributes $f\in \mathcal{F}$ over $+\vee$, and  $g\in \mathcal{F}$ over $-\wedge$, so as to bring occurrences of $+\vee$ and $-\wedge$ to the surface wherever this is possible, and then eliminate them via exhaustive applications of {\em splitting} rules.
\paragraph{Splitting-rules.}

\begin{prooftree} \AxiomC{$\alpha \leq \beta \wedge \gamma
$}\UnaryInfC{$\alpha \leq \beta \quad \alpha \leq \gamma$}
\AxiomC{$\alpha \vee \beta \leq \gamma$}\UnaryInfC{$\alpha \leq \gamma \quad \beta \leq \gamma$}
\noLine\BinaryInfC{}
\end{prooftree}

This gives rise to  a set of inequalities $\{\phi_i' \leq \psi_i'\mid 1\leq i\leq n\}$. Now ALBA forms the \emph{initial quasi-inequalities} $\bigamp S_i \Rightarrow \sf{Ineq}_i$, compactly represented as tuples $(S_i, \sf{Ineq}_i)$ referred as \emph{systems},  with each $S_i$ initialized to the empty set and $\sf{Ineq}_i$ initialized to $\phi'_i \leq \psi'_i$. Each initial system is passed separately to stage 2, described below, where we will suppress indices $i$.

\subsection{Stage 2: Reduction and elimination}

The aim of this stage is to eliminate all occurring propositional variables from a given system $(S, \mathsf{Ineq})$. This is done by means of the following \emph{approximation rules}, \emph{residuation rules}, \emph{splitting rules}, and \emph{Ackermann-rules}, collectively called \emph{reduction rules}. The terms and inequalities in this subsection are~from $\mathcal{L}_\mathrm{LE}^{+}$.

\paragraph{Approximation rules.} There are four approximation rules. Each of these rules functions by simplifying $\mathsf{Ineq}$ and adding an inequality to $S$.

\begin{description}
\item[Left-positive approximation rule.] $\phantom{a}$
\begin{center}
\AxiomC{$(S, \;\; \phi'(\gamma / !x)\leq \psi)$}
\RightLabel{$(L^+A)$}
\UnaryInfC{$(S\! \cup\! \{ \nomj \leq \gamma\},\;\; \phi'(\nomj / !x)\leq \psi)$}
\DisplayProof
\end{center}
with $+x \prec +\phi'(!x)$,  the branch of $+\phi'(!x)$ starting at $+x$ being SAC (cf.\ definition \ref{def:good:branch}), $\gamma$ belonging to the original language $\mathcal{L}_\mathrm{LE}$
%
    %
    %
    and $\nomj$ being the first nominal variable not  occurring in $S$ or  $\phi'(\gamma / !x)\leq \psi$.
\item[Left-negative approximation rule.]$\phantom{a}$
\begin{center}
\AxiomC{$(S, \;\; \phi'(\gamma / !x)\leq \psi)$}
\RightLabel{$(L^-A)$}
\UnaryInfC{$(S\! \cup\! \{ \gamma\leq\cnomm\},\;\; \phi'(\cnomm / !x)\leq \psi)$}
\DisplayProof
\end{center}
with $-x \prec +\phi'(!x)$, the branch of $+\phi'(!x)$ starting at $-x$ being SAC, $\gamma$ belonging to the original language $\mathcal{L}_\mathrm{LE}$ and
     $\cnomm$ being the first co-nominal not  occurring in $S$ or $\phi'(\gamma / !x)\leq \psi$.
\item[Right-positive approximation rule.]$\phantom{a}$
\begin{center}
\AxiomC{$(S, \;\; \phi\leq \psi'(\gamma / !x))$}
\RightLabel{$(R^+A)$}
\UnaryInfC{$(S\! \cup\! \{ \nomj \leq \gamma\},\;\; \phi\leq \psi'(\nomj / !x))$}
\DisplayProof
\end{center}
with $+x \prec -\psi'(!x)$,  the branch of $-\psi'(!x)$ starting at $+x$ being SAC, $\gamma$ belonging to the original language $\mathcal{L}_\mathrm{LE}$ and
     $\nomj$ being the first nominal not  occurring in $S$ or $\phi\leq \psi'(\gamma / !x)$.
\item[Right-negative approximation rule.] $\phantom{a}$
\begin{center}
\AxiomC{$(S, \;\; \phi\leq \psi'(\gamma / !x))$}
\RightLabel{$(R^-A)$}
\UnaryInfC{$(S\! \cup\! \{ \gamma\leq \cnomm\},\;\; \phi\leq \psi'(\cnomm / !x))$}
\DisplayProof
\end{center}
with $-x \prec -\psi'(!x)$, the branch of $-\psi'(!x)$ starting at $-x$ being SAC, $\gamma$ belonging to the original language $\mathcal{L}_\mathrm{LE}$ and  $\cnomm$ being the first co-nominal not  occurring in $S$ or $\phi\leq \psi'(\gamma / !x))$.
\end{description}

\noindent The approximation rules above, as stated, will be shown to be sound both under admissible and under arbitrary assignments (cf.\ Proposition \ref{Rdctn:Rls:Crctnss:Prop}). However, their liberal application gives rise to topological complications in the proof of canonicity. Therefore, we will restrict the applications of approximation rules to nodes $!x$ giving rise to {\em maximal} SAC branches. Such applications will be called {\em pivotal}. Also, executions of ALBA in which approximation rules are applied only pivotally will be referred to as {\em pivotal}.\label{pivotal:approx:rule:application}

\paragraph{Residuation rules.} These rules operate on the inequalities in $S$, by rewriting a chosen inequality in $S$ into another inequality. For every $f\in \mathcal{F}_n$ and $g\in \mathcal{G}_n$, and any $1\leq i\leq n_f$ and $1\leq j\leq n_g$,

\begin{prooftree}
\AxiomC{$f(\phi_1,\ldots,\phi_i,\ldots,\phi_{n_f}) \leq \psi $}
\RightLabel{$\epsilon_f(i) = 1$}
\UnaryInfC{$\phi_i\leq f^\sharp_i(\phi_1,\ldots,\psi,\ldots,\phi_{n_f})$}
\AxiomC{$f(\phi_1,\ldots,\phi_i,\ldots,\phi_{n_f}) \leq \psi $}
\RightLabel{$\epsilon_f(i) = \partial$}
\UnaryInfC{$f^\sharp_i(\phi_1,\ldots,\psi,\ldots,\phi_{n_f})\leq \phi_i$}
\noLine\BinaryInfC{}
\end{prooftree}

\begin{prooftree}
\AxiomC{$\psi\leq g(\phi_1,\ldots,\phi_i,\ldots,\phi_{n_g})$}
\RightLabel{$\epsilon_g(i) = 1$}
\UnaryInfC{$g^\flat_i(\phi_1,\ldots,\psi,\ldots,\phi_{n_g})\leq \phi_i$}
\AxiomC{$\psi\leq g(\phi_1,\ldots,\phi_i,\ldots,\phi_{n_g})$}
\RightLabel{$\epsilon_g(i) = \partial$}
\UnaryInfC{$\phi_i\leq g^\flat_i(\phi_1,\ldots,\psi,\ldots,\phi_{n_g})$}
\noLine\BinaryInfC{}
\end{prooftree}
For every $f\in \mathcal{F}_r$ and $g\in \mathcal{G}_r$,
\begin{center}
\begin{tabular}{cc}
\AxiomC{$f(\phi)\leq\psi$}
\RightLabel{(if $\epsilon_f = 1$)}
\UnaryInfC{$f(\bot)\leq \psi\;\;\;\phi\leq\blacksquare_{f}\psi$}
\DisplayProof

&
\AxiomC{$ \phi\leq g(\psi)$}
\RightLabel{(if $\epsilon_g = 1$)}
\UnaryInfC{$\phi\leq g(\top)\;\;\; \Diamondblack_g\phi\leq\psi$}
\DisplayProof

\end{tabular}
\end{center}

\begin{center}
\begin{tabular}{cc}
\AxiomC{$f(\phi)\leq \psi$}
\RightLabel{(if $\epsilon_f = \partial$)}
\UnaryInfC{$f(\top)\leq\psi\;\;\; {\blhd} \psi\leq\phi$}
\DisplayProof

& \AxiomC{$\phi\leq g(\psi)$}
\RightLabel{(if $\epsilon_g = \partial$)}
\UnaryInfC{$\phi\leq g(\bot)\;\;\;\psi\leq {\brhd} \phi$}
\DisplayProof \\

\end{tabular}
\end{center}
In a given system, each of these rules replaces an instance of the upper inequality with the corresponding instances of the two lower inequalities.

The leftmost inequalities in each rule above will be referred to as the \emph{side condition}\label{def:sidecondition}.

\paragraph{Right Ackermann-rule.} $\phantom{a}$
\begin{center}
\AxiomC{$(\{ \alpha_i \leq p \mid 1 \leq i \leq n \} \cup \{ \beta_j(p)\leq \gamma_j(p) \mid 1 \leq j \leq m \}, \;\; \mathsf{Ineq})$}
\RightLabel{$(RAR)$}
\UnaryInfC{$(\{ \beta_j(\bigvee_{i=1}^n \alpha_i)\leq \gamma_j(\bigvee_{i=1}^n \alpha_i) \mid 1 \leq j \leq m \},\;\; \mathsf{Ineq})$}
\DisplayProof
\end{center}
where:
\begin{itemize}
\item $p$ does not occur in $\alpha_1, \ldots, \alpha_n$ or in $\mathsf{Ineq}$,
\item $\beta_{1}(p), \ldots, \beta_{m}(p)$ are positive in $p$, and
\item $\gamma_{1}(p), \ldots, \gamma_{m}(p)$ are negative in $p$.

\end{itemize}

\paragraph{Left Ackermann-rule.}$\phantom{a}$
\begin{center}
\AxiomC{$(\{ p \leq \alpha_i \mid 1 \leq i \leq n \} \cup \{ \beta_j(p)\leq \gamma_j(p) \mid 1 \leq j \leq m \}, \;\; \mathsf{Ineq})$}
\RightLabel{$(LAR)$}
\UnaryInfC{$(\{ \beta_j(\bigwedge_{i=1}^n \alpha_i)\leq \gamma_j(\bigwedge_{i=1}^n \alpha_i) \mid 1 \leq j \leq m \},\;\; \mathsf{Ineq})$}
\DisplayProof
\end{center}
where:
\begin{itemize}
\item $p$ does not occur in $\alpha_1, \ldots, \alpha_n$ or in $\mathsf{Ineq}$,
\item $\beta_{1}(p), \ldots, \beta_{m}(p)$ are negative in $p$, and
\item $\gamma_{1}(p), \ldots, \gamma_{m}(p)$ are positive in $p$.

\end{itemize}

\subsection{Stage 3: Success, failure and output}

If stage 2 succeeded in eliminating all propositional variables from each system, the algorithm returns the conjunction of these purified quasi-inequalities, denoted by $\mathsf{ALBA}(\phi \leq \psi)$. Otherwise, the algorithm reports failure and terminates.

\begin{exa}\label{ex: ALBA run on frege}
As mentioned in Example \ref{ex: frege as type 3},
		
		\[p\rhu(q\rhu r)\leq (p\rhu q)\rhu (p\rhu r)\]

		\noindent is  $(\Omega, \epsilon)$-inductive for $r <_\Omega p <_\Omega q$ and $\epsilon(p, q, r) =(1, 1, \partial)$. A pivotal execution of ALBA according to this choice of $\Omega$ and $\epsilon$, is given below. The variables $\nomi, \nomj,\nomh$ are nominals, $\cnomm$ is a co-nominal, $\bullet$ is the left adjoint of $\rhu$ in its second coordinate and $\lhu$ is the Galois-adjoint of $\rhu$ in its first coordinate.\footnote{Notice, however, that the Galois-adjoint of $\rhu$ in its first coordinate is, modulo inversion of coordinates, also the right adjoint of $\bullet$ in its first coordinate (cf.\ Footnote \ref{footnote: notation residuals}). The symbol $\lhu$ is more appropriate for the latter role.}
		
		
		

                \medskip
\noindent\begin{tabular}{@{}r@{}c l l@{}}
				& & $\forall p\forall q\forall r[p\rhu(q\rhu r)\leq (p\rhu q)\rhu (p\rhu r)]$ &\\
				& iff\! & $\forall p \forall q\forall r \forall \nomi\forall \nomj \forall \cnomm\forall \nomh[(\nomh\leq p\rhu(q\rhu r)\ \&\ \nomi\leq p\rhu q  \ \&\ \nomj\leq p  \ \& \ r\leq \cnomm)\Rightarrow \nomh\leq \nomi\rhu (\nomj\rhu \cnomm)]$&\\
& iff\! & $\forall p \forall q\forall \nomi\forall \nomj\forall \nomh \forall \cnomm[(\nomh\leq p\rhu(q\rhu \cnomm)\ \&\ \nomi\leq p\rhu q  \ \&\ \nomj\leq p)\Rightarrow \nomh\leq \nomi\rhu (\nomj\rhu \cnomm)]$&\\
& iff\! & $\forall q\forall \nomi\forall \nomj\forall \nomh \forall \cnomm[(\nomh\leq \nomj\rhu(q\rhu \cnomm)\ \&\ \nomi\leq \nomj\rhu q)\Rightarrow \nomh\leq \nomi\rhu (\nomj\rhu \cnomm)]$&\\
& iff\! & $\forall q\forall \nomi\forall \nomj\forall \nomh \forall \cnomm[(\nomj\bullet \nomh\leq q\rhu \cnomm\ \&\ \nomi\leq \nomj\rhu q)\Rightarrow \nomh\leq \nomi\rhu (\nomj\rhu \cnomm)]$&\\
& iff\! & $\forall q\forall \nomi\forall \nomj\forall \nomh \forall \cnomm[(q\bullet(\nomj\bullet \nomh)\leq \cnomm\ \&\ \nomi\leq \nomj\rhu q)\Rightarrow \nomh\leq \nomi\rhu (\nomj\rhu \cnomm)]$&\\
& iff\! & $\forall q\forall \nomi\forall \nomj\forall \nomh \forall \cnomm[(q\leq \cnomm\lhu (\nomj\bullet \nomh)\ \&\ \nomi\leq \nomj\rhu q)\Rightarrow \nomh\leq \nomi\rhu (\nomj\rhu \cnomm)]$&\\
& iff\! & $\forall \nomi\forall \nomj\forall \nomh \forall \cnomm[\nomi\leq \nomj\rhu (\cnomm\lhu (\nomj\bullet \nomh))\Rightarrow \nomh\leq \nomi\rhu (\nomj\rhu \cnomm)]$&\\	
& iff\! & $\forall \nomi\forall \nomj\forall \nomh \forall \cnomm[\nomi\leq \nomj\rhu (\cnomm\lhu (\nomj\bullet \nomh))\Rightarrow \nomi\bullet \nomh\leq  \nomj\rhu \cnomm]$&\\
& iff\! & $\forall \nomi\forall \nomj\forall \nomh \forall \cnomm[\nomi\leq \nomj\rhu (\cnomm\lhu (\nomj\bullet \nomh))\Rightarrow \nomi\leq  (\nomj\rhu \cnomm)\lhu \nomh]$&\\	
& iff\! & $\forall \nomj\forall \nomh \forall \cnomm[\nomj\rhu (\cnomm\lhu (\nomj\bullet \nomh))\leq  (\nomj\rhu \cnomm)\lhu \nomh].$&\\			
			\end{tabular}
\end{exa}

\begin{rem}\label{Rem:Additional:Rule}
As discussed in Remark \ref{Rem:Suzuki}, it would be possible to accommodate, both in the definition of the inductive formulas and in ALBA, $n$-ary connectives which preserve meets and joins in the product. We could add sets $\mathcal{F}_{p} \subseteq \mathcal{F}$  and $\mathcal{G}_{p} \subseteq \mathcal{G}$ of such connectives. In every $(\mathcal{F}, \mathcal{G})$-expanded lattice $\mathbb{A}$ we would have $f^{\mathbb{A}}(\bigvee^{\epsilon_f}_{i \in I} \overline{a}_i) = \bigvee_{i \in I} f(\overline{a}_i)$ for all nonempty subsets $\{\overline{a}_i \mid i \in I\} \subseteq \mathbb{A}^{\epsilon_f}$ and, similarly,  $g^{\mathbb{A}}(\bigwedge^{\epsilon_g}_{i \in I} \overline{a}_i) = \bigwedge_{i \in I} g(\overline{a}_i)$ for all nonempty subsets $\{\overline{a}_i \mid i \in I\} \subseteq \mathbb{A}^{\epsilon_g}$. The accompanying expanded languages would contain connectives $\triangle_f$ and $\triangledown_g$ for the normalizations of each $f \in \mathcal{F}_{p}$ and $g \in \mathcal{G}_{p}$. The intended interpretations of these are such that $f(\overline{a}) = f(\overline{\bot}^{\epsilon_f}) \vee \triangle_f(\overline{a})$ and $g(\overline{a}) = g(\overline{\top}^{\epsilon_g}) \wedge \triangledown_d(\overline{a})$. For each $f \in \mathcal{F}_{p}$ there would be adjoints $\blacktriangledown_i$, $1 \leq i \leq n_f$, such that $\triangle_f(\overline{a}) \leq b$ iff $\overline{a} \leq^{\epsilon_f} \overline{\blacktriangledown_i b}$ iff $a_i \leq^{\epsilon_f(i)} \blacktriangledown_i b$ for each $1 \leq i \leq n_f$. Dually, for each $g \in \mathcal{G}_{p}$ there would be adjoints $\blacktriangle_i$, $1 \leq i \leq n_g$, such that $a \leq \triangle_g(\overline{b})$ iff $\overline{\blacktriangle_i a} \leq^{\epsilon_g} \overline{b} $ iff $\blacktriangledown_i a \leq^{\epsilon_g(i)} b_i$ for each $1 \leq i \leq n_g$.

The appropriate residuation (or rather, \emph{adjunction}) rules for these connective to be added to ALBA would be as follows:

\begin{center}
\AxiomC{$f(\phi_1, \ldots, \phi_n)\leq\psi$}
\RightLabel{}
\UnaryInfC{$f(\overline{\bot}^{\epsilon_f})\leq \psi\;\;\;\phi_1 \leq^{\epsilon_f(1)} \blacktriangledown^{f}_1 \psi \;\; \cdots \;\; \phi_n \leq^{\epsilon_{f}(n_f)} \blacktriangledown^{f}_{n_f} \psi$}
\DisplayProof
\end{center}

\begin{center}
\AxiomC{$\phi \leq g(\psi_1, \ldots, \psi_n)$}
\RightLabel{}
\UnaryInfC{$\phi \leq g(\overline{\top}^{\epsilon_g})\;\;\;\blacktriangle^g_1 \phi \leq^{\epsilon_g(1)} \phi_1 \;\; \cdots \;\; \blacktriangle^g_{n_g} \phi \leq^{\epsilon_{g}(n_g)} \phi_{n_g}$}
\DisplayProof
\end{center}
\end{rem}

\section{Partial correctness of ALBA}\label{Sec:Crctns:ALBA}

In this section, we prove that ALBA is partially correct, in the sense that whenever it succeeds in eliminating all propositional variables from an inequality $\phi \leq \psi$, the conjunction of the quasi-inequalities returned is equivalent to $\phi \leq \psi$ on the constructive canonical extension $\bbas$ of any  $\mathcal{L}_\mathrm{LE}$-algebra $\bba$, for an arbitrarily fixed language $\mathcal{L}_\mathrm{LE}$.
Our treatment  follows the same structure as the proof of correctness in \cite{ConPal13}. Accordingly, we rely on \cite{ConPal13} whenever possible, and mostly expand on the parts addressing the differences.

Fix a  $\mathcal{L}_\mathrm{LE}$-algebra $\bba = (L, \mathcal{F}^\bba, \mathcal{G}^\bba)$ for the remainder of this section. We first give the statement of the correctness theorem and its proof, and subsequently prove the lemmas needed in the proof.

\begin{thm}[Partial Correctness]\label{Crctns:Theorem} If $\mathsf{ALBA}$ succeeds in reducing an $\mathcal{L}_\mathrm{LE}$-inequality $\phi \leq \psi$ and yields $\mathsf{ALBA}(\phi \leq \psi)$, then $\bbas \models \phi \leq \psi$ iff $\bbas \models \mathsf{ALBA}(\phi \leq \psi)$.
\end{thm}
\begin{proof}
Let $\phi_i \leq \psi_i$, $1 \leq i \leq n$ be the quasi-inequalities produced by preprocessing $\phi \leq \psi$. Consider the chain of statements (\ref{Crct:Eqn1}) to (\ref{Crct:Eqn5}) below. The proof will be complete if we can show that they are all equivalent.
\begin{eqnarray}
&&\bbas \models \phi \leq \psi\label{Crct:Eqn1}\\
&&\bbas \models \phi_i \leq \psi_i,  \quad 1 \leq i \leq n \label{Crct:Eqn2}\\
&&\bbas \models \bigamp \varnothing \Rightarrow  \phi_i \leq \psi_i, \quad 1 \leq i \leq n \label{Crct:Eqn3}\\
&&\bbas \models \bigamp S_i \Rightarrow \mathsf{Ineq}_i, \quad 1 \leq i \leq n \label{Crct:Eqn4}\\
& &\bbas \models \mathsf{ALBA}(\phi \leq \psi)\label{Crct:Eqn5}
\end{eqnarray}
For the equivalence of (\ref{Crct:Eqn1}) and (\ref{Crct:Eqn2}) we need to verify that the rules for the elimination of uniform variables, distribution and splitting preserve validity on $\bbas$. Distribution and splitting are immediate. As to elimination, if $\alpha(p) \leq \beta(p)$ is positive in $p$, then for all $a \in \bba$ it is the case that $\alpha(a) \leq \alpha(\bot)$ and  $\beta(\bot) \leq \beta(a)$. Hence if $\alpha(\bot) \leq \beta(\bot)$ then $\alpha(a) \leq \beta(a)$, and hence $\bbas \models \alpha(p) \leq \beta(p)$ iff $\bbas \models \alpha(\bot) \leq \beta(\bot)$. The case for $\gamma(p) \leq \delta(p)$ negative in $p$ is similar.

That (\ref{Crct:Eqn2}) and (\ref{Crct:Eqn3}) are equivalent is immediate. The bi-implication between (\ref{Crct:Eqn3}) and (\ref{Crct:Eqn4}) follows from proposition \ref{Rdctn:Rls:Crctnss:Prop}, while (\ref{Crct:Eqn4}) and (\ref{Crct:Eqn5}) are the same by definition.
\end{proof}

\begin{lem}[Distribution lemma]\label{Distribution:Lemma}
If  $\phi(!x), \psi(!x), \xi(!x), \chi(!x) \in \mathcal{L}_\mathrm{LE}^{+}$  and $\varnothing \neq \{ a_j \}_{j \in I} \subseteq \bbas$, then
\begin{enumerate}
\item $\phi(\bigvee_{j \in I} a_j) = \bigvee\{ \phi(a_j) \mid j \in I \}$, when $+x \prec +\phi(!x)$ and in $+\phi(!x)$ the branch ending in $+x$ is SAC;
\item $\psi(\bigwedge_{j \in I} a_j) = \bigvee\{ \psi(a_j) \mid j \in I \}$, when $-x \prec +\psi(!x)$ and in $+\psi(!x)$ the branch ending in $-x$ is SAC;
\item $\xi(\bigwedge_{j \in I} a_j) = \bigwedge\{ \xi(a_j) \mid j \in I \}$, when $-x \prec -\xi(!x)$ and in $-\xi(!x)$ the branch ending in $-x$ is SAC;
\item $\chi(\bigvee_{j \in I} a_j) = \bigwedge\{ \chi(a_j) \mid j \in I \}$, when $+x \prec -\chi(!x)$ and in $-\chi(!x)$ the branch ending in $+x$ is SAC.
\end{enumerate}
\end{lem}
\begin{proof}
The proof is very similar to \cite[Lemma 6.2]{ConPal13} and proceeds by simultaneous induction on $\phi$, $\psi$, $\xi$ and $\chi$. The base cases for $\bot$, $\top$, and $x$, when applicable, are trivial. We check the inductive cases for $\phi$, and list the other inductive cases, which all follow in a similar way.

\begin{description}
\item[$\phi$ of the form] ${ }$

\begin{description}
    \item[$f(\phi_1,\ldots,\phi_i(!x),\ldots,\phi_{n_f})$ with $f\in \mathcal{F}$ and $\epsilon_f(i) = 1$:] By the assumption of a unique occurrence of $x$ in $\phi$, the variable $x$ occurs in $\phi_i$ for exactly one index $1\leq i\leq n_f$. 
        The assumption that $\epsilon_f(i) = 1$ implies that $+ x \prec + \phi_i$.
        Then
        \begin{center}
        \begin{tabular}{r c l l}
        $\phi(\bigvee_{j \in I} a_j)$ &$ =$&$ f(\phi_1,\ldots,\phi_i(\bigvee_{j \in I} a_j)\ldots,\phi_{n_f})$\\
        &$ =$&$  f(\phi_1,\ldots, \bigvee_{j \in I}\phi_i( a_j)\ldots,\phi_{n_f})$\\
         &$ =$&$  \bigvee_{j \in I}f(\phi_1,\ldots,\phi_i(a_j)\ldots,\phi_{n_f})$ & ($\varnothing \neq \{ a_j \}_{j \in I}$)\\
         &$ =$&$  \bigvee_{j \in I} \phi(a_j)$,\\
          \end{tabular}
          \end{center}where the second equality holds by the inductive hypothesis, since the branch of $+\phi$ ending in $+x$ is SAC, and it traverses $+\phi_i$.
\item[$f(\phi_1,\ldots,\psi_i(!x),\ldots,\phi_{n_f})$ with $f\in \mathcal{F}$ and $\epsilon_f(i) = \partial$:] By the assumption of a unique occurrence of $x$ in $\phi$, the variable $x$ occurs in $\psi_i$ for exactly one index $1\leq i\leq n_f$. The assumption that $\epsilon_f(i) = \partial$ implies that $- x \prec + \psi_i$. 
    Then \begin{center}
        \begin{tabular}{r c l l}
        $\phi(\bigvee_{j \in I} a_i)$ &$ =$&$ f(\phi_1,\ldots,\psi_i(\bigvee_{j \in I} a_j)\ldots,\phi_{n_f})$\\
        &$ =$&$  f(\phi_1,\ldots, \bigwedge_{j \in I}\psi_i( a_j)\ldots,\phi_{n_f})$\\
        &$ =$&$  \bigvee_{j \in I}f(\phi_1,\ldots,\psi_i(a_j)\ldots,\phi_{n_f})$ &  ($\varnothing \neq \{ a_j \}_{j \in I}$)\\
         &$ =$&$  \bigvee_{j \in I} \phi(a_j)$,\\
          \end{tabular}
          \end{center}
     where the second equality holds by the inductive hypothesis, since the branch of $+\phi$ ending in $+x$ is SAC, and it traverses $-\psi_i$.
    %




    \end{description}
\item[$\psi$ of the form]  $f(\psi_1,\ldots,\psi_i(!x),\ldots,\psi_{n_f})$ with $f\in \mathcal{F}$ and $\epsilon_f(i) = 1$ or $f(\psi_1,\ldots,\xi_i(!x),\allowbreak \ldots,\phi_{n_f})$ with $f\in \mathcal{F}$ and $\epsilon_f(i) = \partial$.
\item[$\xi$ of the form] $g(\xi_1,\ldots,\xi_i(!x),\ldots,\xi_{n_g})$ with $g\in \mathcal{G}$ and $\epsilon_g(i) = 1$ or $g(\xi_1,\ldots,\chi_i(!x),\allowbreak \ldots,\xi_{n_g})$ with $g\in \mathcal{G}$ and $\epsilon_g(i) = \partial$. 


\item[$\chi$ of the form] $g(\chi_1,\ldots,\chi_i(!x),\ldots,\chi_{n_g})$ with $g\in \mathcal{G}$ and $\epsilon_g(i) = 1$ or $g(\chi_1,\ldots,\phi_i(!x), \allowbreak \ldots,\xi_{n_g})$ with $g\in \mathcal{G}$ and $\epsilon_g(i) = \partial$.
\qedhere
\end{description}
\end{proof}

\begin{lem}[Right Ackermann Lemma]\label{Ackermann:Right:Lemma}
Let $\alpha_1, \ldots, \alpha_n \in \mathcal{L}_\mathrm{LE}^{+}$ with $p \not \in \mathsf{PROP}(\alpha_i)$, $1 \leq i \leq n$, let $\beta_{1}(p), \ldots, \beta_{m}(p) \in \mathcal{L}_\mathrm{LE}^{+}$ be positive in $p$, and let $\gamma_{1}(p), \ldots, \gamma_{m}(p) \in \mathcal{L}_\mathrm{LE}^{+}$ be negative in $p$. Let $v$ be any assignment on the constructive canonical extension of an $\mathcal{L}_\mathrm{LE}$-algebra $\bba$. Then
\[
\bbas, v \models \beta_i(\alpha_1 \vee \cdots \vee \alpha_n / p) \leq \gamma_i(\alpha_1 \vee \cdots \vee \alpha_n / p), \textrm{ for all } 1 \leq i \leq m
\]
iff there exists a variant $v' \sim_p v$ such that
\[
\bbas, v' \models \alpha_i \leq p \textrm{ for all } 1 \leq i \leq n, \textrm{ and } \bba, v' \models \beta_i(p) \leq \gamma_i(p), \textrm{ for all } 1 \leq i \leq m.
\]
\end{lem}
\begin{proof}
For the implication from top to bottom, let $v'(p) = v(\alpha_1 \vee \cdots \vee \alpha_n)$. Since the $\alpha_i$ does not contain $p$, we have $v(\alpha_i) = v'(\alpha_i)$, and hence that $v'(\alpha_i) \leq v(\alpha_1) \vee \cdots \vee v(\alpha_n) = v(\alpha_1 \vee \cdots \vee \alpha_n) = v'(p)$. Moreover, $v'(\beta_i(p)) = v(\beta_i(\alpha_1 \vee \cdots \vee \alpha_n / p)) \leq v(\gamma_i(\alpha_1 \vee \cdots \vee \alpha_n / p)) = v'(\gamma_i(p))$.

For the implication from bottom to top, we make use of the fact that the $\beta_i$ are monotone (since positive) in $p$, while the $\gamma_i$ are antitone (since negative) in $p$. Since $v(\alpha_i) = v'(\alpha_i) \leq v'(p)$ for all $1 \leq n \leq n$, we have $v(\alpha_1 \vee \cdots \vee \alpha_n) \leq v'(p)$, and hence $v(\beta_i(\alpha_1 \vee \cdots \vee \alpha_n / p)) \leq v'(\beta_i(p)) \leq v'(\gamma_i(p)) \leq (\gamma_i(\alpha_1 \vee \cdots \vee \alpha_n / p))$.
\end{proof}

The proof of the following version of the lemma is similar.

\begin{lem}[Left Ackermann Lemma]\label{Ackermann:Left:Lemma}
Let $\alpha_1, \ldots, \alpha_n \in \mathcal{L}_\mathrm{LE}^{+}$ with $p \not \in \mathsf{PROP}(\alpha_i)$, $1 \leq i \leq n$, let $\beta_{1}(p), \ldots, \beta_{m}(p) \in \mathcal{L}_\mathrm{LE}^{+}$ be negative in $p$, and let $\gamma_{1}(p), \ldots, \gamma_{m}(p) \in \mathcal{L}_\mathrm{LE}^{+}$ be positive in $p$. Let $v$ be any assignment on the constructive canonical extension of an $\mathcal{L}_\mathrm{LE}$-algebra $\bba$. Then
\[
\bbas, v \models \beta_i(\alpha_1 \wedge \cdots \wedge \alpha_n / p) \leq \gamma_i(\alpha_1 \wedge \cdots \wedge \alpha_n / p), \textrm{ for all } 1 \leq i \leq m
\]
iff there exists a variant $v' \sim_p v$ such that
\[
\bbas, v' \models p \leq \alpha_i  \textrm{ for all } 1 \leq i \leq n, \textrm{ and } \bba, v' \models \beta_i(p) \leq \gamma_i(p), \textrm{ for all } 1 \leq i \leq m.
\]
\end{lem}

\begin{prop}\label{Rdctn:Rls:Crctnss:Prop}
If a system $(S,\mathsf{Ineq})$ is obtained from a system $(S_0,\mathsf{Ineq}_0)$ by the application of reduction rules, then
\[
\bbas \models \forall \overline{var_0} \: [\bigamp S_0 \Rightarrow \mathsf{Ineq}_0] \textrm{ \ iff \ } \bbas \models \forall \overline{var} \: [\bigamp S \Rightarrow \mathsf{Ineq}],
\]
where $\overline{var_0}$ and $\overline{var}$ are the vectors of all variables occurring in $\bigamp S_0 \Rightarrow \mathsf{Ineq}_0$ and $\bigamp S \Rightarrow \mathsf{Ineq}$, respectively.
\end{prop}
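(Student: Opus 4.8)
The plan is to reduce, by induction on the length of the derivation, to the case of a single application of each reduction rule, and to show in each such case that the quasi-inequality before and the quasi-inequality after the application are logically equivalent when evaluated over \emph{arbitrary} assignments into $\bbas$. This is legitimate because $\bbas$ supports the interpretation of the whole language $\mathcal{L}_\mathrm{LE}^+$ (cf.\ Subsection~\ref{Subsec:Expanded:Land} and the preceding subsections): in particular the tense connectives of $\mathcal{L}_\mathrm{LE}^\ast$ are interpreted by genuine adjoints/residuals in the complete lattice $\bbas$. Note that the relevant vector of universally quantified variables is unchanged for splitting and residuation rules, loses the variable $p$ for the Ackermann rules, and gains one fresh (co-)nominal for the approximation rules; in each case the claimed equivalence is to be read with this in mind, and the effect of a fresh variable is neutralized by the standard fact that validity of a quasi-inequality is insensitive to extending an assignment on variables not occurring in it.

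The easy rules are handled as follows. Splitting rules are immediate from the universal properties of $\wedge$ and $\vee$ in $\bbas$. For the residuation rules attached to $f\in\mathcal{F}_n$ and $g\in\mathcal{G}_n$, one evaluates both the upper and the lower inequalities under a fixed assignment $v$ and applies, pointwise in $\bbas$, the defining residuation/Galois laws of the tense $\mathcal{L}_\mathrm{LE}$-algebra structure on $\bbas$. For the residuation rules attached to a regular $f\in\mathcal{F}_r$ (and dually $g\in\mathcal{G}_r$), one first rewrites $f^{\bbas}(u)=f^{\bbas}(\bot)\vee\Diamond^{\bbas}_f(u)$ for all $u\in\bbas$, using Lemma~\ref{lemma:LE algebras interpret normalizations} applied to the quasi-perfect LE $\bbas$ together with Lemma~\ref{lemma: f and diamond f coincide on non bottom els}; then $f^{\bbas}(u)\leq w$ iff $f^{\bbas}(\bot)\leq w$ and $\Diamond^{\bbas}_f(u)\leq w$ iff $f^{\bbas}(\bot)\leq w$ and $u\leq\blacksquare_f^{\bbas}(w)$, which is exactly what the rule records, the side condition $f(\bot)\leq\psi$ capturing the $f^{\bbas}(\bot)$ summand. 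The Ackermann rules are precisely the content of Lemmas~\ref{Ackermann:Right:Lemma} and~\ref{Ackermann:Left:Lemma}: fixing all variables other than $p$, if $\mathsf{Ineq}$ already holds both quasi-inequalities are vacuously satisfied, and otherwise the existence of a $p$-variant witnessing the conjunction $\bigamp S_0$ is, by the Ackermann Lemma, equivalent to the validity of the transformed system of inequalities, i.e.\ to the failure of $\bigamp S$.

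The bulk of the argument is the correctness of the approximation rules; I treat $(L^+A)$ in detail, the other three cases being order- and side-dual. For the direction from top to bottom: since $+x\prec+\phi'(!x)$, the term $\phi'$ is monotone in $x$, so from an assignment $v$ with $v(\nomj)\leq v(\gamma)$ and $\phi'(v(\gamma)/!x)\leq v(\psi)$ one gets $\phi'(v(\nomj)/!x)\leq\phi'(v(\gamma)/!x)\leq v(\psi)$. For the direction from bottom to top, fix an assignment $v$ validating $\bigamp S$. By denseness of $\bba$ in $\bbas$, every element is a join of closed elements, so $v(\gamma)=\bigvee\{k\in\kbbas\mid k\leq v(\gamma)\}$, a nonempty join (e.g.\ $\bot$ lies in the index set). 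Since the branch of $+\phi'(!x)$ at $+x$ is SAC, the Distribution Lemma~\ref{Distribution:Lemma}(1) yields $\phi'(v(\gamma)/!x)=\bigvee\{\phi'(k/!x)\mid k\in\kbbas,\ k\leq v(\gamma)\}$. For each such $k$, because $\nomj$ occurs neither in $S$ nor in $\gamma$, the $\nomj$-variant $v'$ of $v$ defined by $v'(\nomj)=k$ still validates $\bigamp S$ and satisfies $v'(\nomj)=k\leq v(\gamma)=v'(\gamma)$; hence, by hypothesis, $\phi'(k/!x)=\phi'(v'(\nomj)/!x)\leq v'(\psi)=v(\psi)$. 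Taking the join over all such $k$ gives $\phi'(v(\gamma)/!x)\leq v(\psi)$, as desired. The rules $(L^-A)$, $(R^+A)$, $(R^-A)$ are handled symmetrically, using antitonicity where the critical occurrence is negative, meet-generation of $\bbas$ by its open elements where needed, and the corresponding clauses of the Distribution Lemma.

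I expect the approximation rules to be the main obstacle: they are the only rules whose soundness \emph{under arbitrary assignments} genuinely rests on the fact that, in the constructive setting, the closed and open elements of $\bbas$ take over the complete join- and meet-generation role that completely join-irreducible and meet-irreducible elements play in the non-constructive setting. The delicate points are the bookkeeping of which variables occur in $S$, $\gamma$, and $\mathsf{Ineq}$ (so that the fresh (co-)nominal can be re-assigned without disturbing the validity of $\bigamp S$ or of $\gamma$), and the verification that the relevant sets of closed/open approximants are nonempty so that the Distribution Lemma applies. Everything else is a routine unwinding of the adjunction properties already established for $\bbas$.
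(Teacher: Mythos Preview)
Your proposal is correct and follows essentially the same approach as the paper: reduce to single rule applications, handle splitting and residuation rules via the lattice and adjunction laws in $\bbas$, justify the regular-connective residuation rules via the normalization decomposition $f(u)=f(\bot)\vee\Diamond_f(u)$ (Lemmas~\ref{lemma:LE algebras interpret normalizations} and~\ref{lemma: f and diamond f coincide on non bottom els}), treat the approximation rules via denseness plus the Distribution Lemma~\ref{Distribution:Lemma}, and invoke Lemmas~\ref{Ackermann:Right:Lemma} and~\ref{Ackermann:Left:Lemma} for the Ackermann rules. Your treatment is in fact slightly more explicit than the paper's (you spell out both directions of the approximation rule and the freshness bookkeeping); the only blemish is the garbled clause ``i.e.\ to the failure of $\bigamp S$'' in your Ackermann paragraph, which should read ``i.e.\ to the \emph{truth} of $\bigamp S$'' so that the contrapositive yields the desired equivalence of the two quasi-inequalities.
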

\begin{proof}
It is sufficient to verify that each rule preserves this equivalence, i.e., that if $S$ and $\mathsf{Ineq}$ are obtained from $S'$ and $\mathsf{Ineq}'$ by the application of a single transformation rule then
\[
\bbas \models \forall \overline{var'} \: [\bigamp S' \Rightarrow \mathsf{Ineq}'] \textrm{ \ iff \ } \bbas \models \forall \overline{var} \: [\bigamp S \Rightarrow \mathsf{Ineq}].
\]
\paragraph{Left-positive approximation rule:} Let $\mathsf{Ineq}$ be $ \phi'(\gamma / !x) \leq \psi$, with $+x \prec +\phi'(!x)$, and the branch of $+\phi'(!x)$ starting at $+x$ being SAC. Then, under any assignment to the occurring variables, $\phi'(\gamma / !x) = \phi'(\bigvee \{k \in \kbbas \mid k\leq \gamma \}) = \bigvee \{\phi'(k) \mid \gamma \geq k \in \kbbas\}$, where the latter equality holds by lemma \ref{Distribution:Lemma}.1, given that $\bot\in \{k \in \kbbas \mid k\leq \gamma \}\neq \varnothing$. But then
\[
\bba \models \forall \overline{var'} \: [\bigamp S' \Rightarrow \phi'(\gamma/!x) \leq \psi] \textrm{ \ iff \ } \bba \models \forall \overline{var'} \forall \nomj \: [\bigamp S' \& \; \nomj \leq \gamma \Rightarrow \phi'(\nomj) \leq \psi].
\]
The other approximation rules are justified in a similar manner, appealing to the other clauses of lemma \ref{Distribution:Lemma}.

The residuation rules for $f\in \mathcal{F}_n$ (resp.\ $g\in \mathcal{G}_n$) are justified by the fact that (the algebraic interpretation of) every such $f$ (resp.\ $g$) is a left (resp.\ right) residual in each positive coordinate and left (resp.\ right) Galois-adjoint in each negative coordinate.  
The residuation rules for $f\in \mathcal{F}_r$ (resp.\ $g\in \mathcal{G}_r$) are justified as follows: recall that by Lemma \ref{lemma: f and diamond f coincide on non bottom els},

\[\bbas \models f(p) = f(\bot) \lor \Diamond_f(p) \mbox{ if } \epsilon_f = 1 \quad\quad \bbas \models g(p) = g(\top) \land \Box_g(p) \mbox{ if } \epsilon_g = 1 \]
\[\bbas\models f(p) = f(\top) \lor {\lhd}_f(p) \mbox{ if } \epsilon_f = \partial \quad\quad \bbas \models g(p) = g(\bot) \land {\rhd}_g(p) \mbox{ if } \epsilon_g = \partial. \]

This, together with the the adjunction properties enjoyed by the normalized operations, guarantees that the  rules above are sound, in that they can be `derived' by means of equivalent substitution and adjunction for the normalized connectives. Below we provide  examples of how this is done for $f\in \mathcal{F}_r$ with  $\epsilon_f = \partial$ (left-hand side) and $\epsilon_f = 1$ (right-hand side).

\begin{center}
\begin{tabular}{cc}
\AxiomC{$f(\phi)\leq \psi$}
\UnaryInfC{$f(\top)\vee{\lhd}_f \phi\leq \psi$}
\UnaryInfC{$f(\top)\leq \psi\;\;\; {\lhd}_f\phi\leq \psi$}
\UnaryInfC{$f(\top)\leq\psi\;\;\; {\blhd} \psi\leq\phi$}
\DisplayProof
&
\AxiomC{$f(\phi)\leq \psi$}
\UnaryInfC{$f(\bot)\vee{\Diamond}_f\phi\leq \psi$}
\UnaryInfC{$f(\bot)\leq \psi\;\;\; {\Diamond}_f\phi\leq \psi$}
\UnaryInfC{$f(\bot)\leq\psi\;\;\;  \phi\leq \blacksquare_f\psi$}
\DisplayProof
\\
\end{tabular}
\end{center}
The Ackermann-rules are justified by the Ackermann lemmas \ref{Ackermann:Right:Lemma} and \ref{Ackermann:Left:Lemma}.
\end{proof}
\begin{rem}
\label{rem: towards constructive can}
The proof of the soundness of the approximation rules given above pivots on three ingredients: (1) the denseness of the canonical extension, guaranteeing e.g.\ that every element in $\bbas$ is the join of the closed elements below it; (2) the fact that the set of the closed elements below a given one is always nonempty, since $\bot$ always belongs to it; (3) the defining property of the SAC connectives, guaranteeing the preservation of joins of nonempty collections.  If  nominals and co-nominals are interpreted as completely join-irreducible and meet-irreducible elements respectively, as in the usual context in which ALBA aims at correspondence,  the set of the completely join-irreducible elements below a given element might be empty, and hence, these approximation rules are sound only at the price of strengthening the requirements on the Skeleton connectives different from $\Delta$-adjoints, and insisting that they be SLR, and not just SAC.  In \cite{PaSoZh15b}, more complicated approximation rules had to be given, precisely to account for correspondence in a context in which Skeleton connectives other than $\Delta$-adjoints are SAC.  
\end{rem}

\begin{rem}\label{Adapt:To:Cncl:Ext:Remark}
In Section \ref{sec:canonicity} we will prove that, for each language $\mathcal{L}_\mathrm{LE}$, all $\mathcal{L}_\mathrm{LE}$-inequalities on which ALBA succeeds are canonical. For this it is necessary to show that ALBA transformations preserve validity with respect to {\em admissible} assignments on canonical extensions of LEs. Towards this, it is an easy observation that  all ALBA transformations other than the Ackermann rules preserve validity under admissible assignments.  The potential difficulty with the Ackermann rules stems from the fact that they,  unlike all other rules, involve changing the assignments to propositional variables.
\end{rem}

\section{ALBA successfully reduces all inductive inequalities}\label{Sec:Complete:For:Inductive:Section}
The aim of the present section is showing that ALBA is successful on all inductive $\mathcal{L}_{\mathrm{LE}}$-inequalities, and that in fact {\em safe} and {\em pivotal}  executions suffice. The arguments presented here are largely a synthesis of those made in \cite{CoPaSoZh15} and \cite{ConPal13}. We need to provide a certain amount of detail to show that different parts of the ALBA runs shown to exist satisfy the properties of safety and pivotality, either separately or in combination.

\begin{defi}
An execution of ALBA is \emph{safe} if no side conditions (cf.\ Page \pageref{def:sidecondition}) introduced by applications of adjunction rules for  connectives in $\mathcal{F}_r\cup\mathcal{G}_r$ are further modified, except for receiving Ackermann substitutions.
\end{defi}

\begin{defi}
An $(\Omega, \epsilon)$-inductive inequality  is {\em definite} if in its critical branches, all Skeleton nodes are SAC nodes.
\end{defi}

\begin{lem}\label{Pre:Process:Lemma}
Let $\{ \phi_i \leq \psi_i\}$ be the set of inequalities obtained by
preprocessing an $(\Omega, \epsilon)$-inductive $\mathcal{L}_\mathrm{LE}$-inequality $\phi \leq
\psi$. Then each $\phi_i \leq \psi_i$ is a definite $(\Omega,
\epsilon)$-inductive inequality.
\end{lem}
\begin{proof}
Notice that the distribution during preprocessing only swap the order of Skeleton nodes on (critical) paths, and hence does not affect the goodness of critical branches. Moreover, PIA parts are entirely unaffected, and in particular the side conditions on SRR nodes of critical branches are maintained. Finally, notice that SAC nodes commute exhaustively with $\Delta$-adjoints, and hence all $\Delta$-adjoints are effectively surfaced and eliminated via splitting, thus producing definite inductive inequalities.
\end{proof}

The following definition is intended to capture the state of a system after approximation rules have been applied pivotally until no propositional variable remains in   $\mathsf{Ineq}$:

\begin{defi}\label{Stripped:Def}
Call a system $(S, \mathsf{Ineq})$ \emph{$(\Omega, \epsilon)$-stripped} if $\mathsf{Ineq}$ is pure, and for each $\xi \leq \chi \in S$ the following conditions hold:
\begin{enumerate}
\item one of  $-\xi$ and $+\chi$ is pure, and the other is $(\Omega, \epsilon)$-inductive;
%
%
\item every $\epsilon$-critical branch in $-\xi$ and $+\chi$ is PIA.
\end{enumerate}
\end{defi}

\begin{lem}\label{Stripping:Lemma}
For any definite $(\Omega, \epsilon)$-inductive inequality $\phi \leq \psi$ the system $(\emptyset, \phi \leq \psi)$ can be transformed into an $(\Omega, \epsilon)$-stripped system by the pivotal and safe application of approximation rules.
\end{lem}
\begin{proof}
By assumption, $+\phi$ and $-\psi$ are both definite $(\Omega, \epsilon)$-inductive.  Hence, for any propositional variable occurrences $p$, we can apply an approximation rule  on the first non SAC-node in the path which goes from the root to $p$. 

Let us show that the resulting system $(S, \mathsf{Ineq})$ is $(\Omega, \epsilon)$-stripped. Clearly, the procedure reduces $\mathsf{Ineq}$ to a pure inequality. Each inequality in $S$ is generated by the application of some approximation rule, and is therefore  either of the form $\nomj \leq \alpha$ or $\beta \leq \cnomm$,  where $+\alpha$ and $-\beta$ are subtrees of $(\Omega, \epsilon)$-inductive trees,  and hence are $(\Omega, \epsilon)$-inductive, as required by item 1 of the definition.


Next, if in an inequality in $S$ contained  a critical variable occurrence such that its associated path has an SAC node, then, because such a path is by assumption good, this would contradict the fact that the inequality has been generated by a pivotal application of an approximation rule. This shows item 2.

Lastly, we note that no approximation rule can alter a previously introduced side condition and that therefore the rule applications described in this lemma are safe.
\end{proof}

\begin{defi}\label{Ackermann:Ready:Def}
An $(\Omega, \epsilon)$-stripped system $(S, \mathsf{Ineq})$ is \emph{Ackermann-ready} with respect to a propositional variable $p_i$ with $\epsilon_i = 1$ (respectively, $\epsilon_i = \partial$) if every inequality $\xi \leq \chi \in S$ is of one of the following forms:
\begin{enumerate}
\item $\xi \leq p$ where $\xi$ is pure (respectively, $p \leq \chi$ where $\chi$ is pure), or
\item $\xi \leq \chi$ where neither $- \xi$ nor $+ \chi$ contain any $+p_i$ (respectively, $-p_i$) leaves.
\end{enumerate}
\end{defi}

Note that the right or left Ackermann-rule (depending on whether $\epsilon_i = 1$ or $\epsilon_i = \partial$) is applicable to a system which is Ackermann-ready with respect to $p_i$. In fact, this would still have been the case had we weakened the requirement in clause 1 that $\xi$ and $\chi$ must be pure to simply require that they do not contain $p_i$.

\begin{lem}\label{Ackermann:Ready:LEmma}
If $(S, \mathsf{Ineq})$ is $(\Omega, \epsilon)$-stripped and $p_i$ is $\Omega$-minimal among propositional variables occurring in $(S, \mathsf{Ineq})$, then $(S, \mathsf{Ineq})$ can be transformed, through the application of residuation- and splitting-rules, into a system which is \emph{Ackermann-ready} with respect to $p_i$. Moreover, this can be done in such a way that
    \begin{enumerate}
    \item all side conditions introduced are pure inequalities, and
    \item if all side conditions occurring in $S$ are pure, then the all rule applications in this process are safe.
    \end{enumerate}
\end{lem}
\begin{proof}
If $\xi \leq \chi \in S$ and $- \xi$ and $+ \chi$ contain no $\epsilon$-critical $p_i$-nodes then this inequality already satisfies condition 2 of Definition \ref{Ackermann:Ready:Def}. So suppose that $- \xi$ and $+ \chi$ contain some $\epsilon$-critical $p_i$-node among them. This means $\xi \leq \chi$ is of the form $\alpha \leq \mathsf{Pure}$ with the $\epsilon$-critical $p_i$-node in $\alpha$ and $\mathsf{Pure}$ pure, or of the form $\mathsf{Pure} \leq \delta$ with $\mathsf{Pure}$ pure and the $\epsilon$-critical $p_i$-node in $\delta$. We can now prove by simultaneous induction on $\alpha$ and $\delta$ that these inequalities can be transformed into the form specified by clause 1 of definition \ref{Ackermann:Ready:Def}. 

The base cases are when $- \alpha = -p_i$ and $+\delta = +p_i$. Here the inequalities are in desired shape and no rules need be applied to them. We will only check a few of the inductive cases. If $- \alpha = - (\alpha_1 \vee \alpha_2)$, then 
applying the $\vee$-splitting rule we transform $\alpha_1 \vee \alpha_2 \leq \mathsf{Pure}$ into $\alpha_1 \leq \mathsf{Pure}$ and $\alpha_2 \leq \mathsf{Pure}$. The resulting system is clearly still $(\Omega, \epsilon)$-stripped, and we may apply the inductive hypothesis to $\alpha_1 \leq \mathsf{Pure}$ and $\alpha_2 \leq \mathsf{Pure}$.
If $- \alpha = - f(\overline{\alpha})$ for some $f \in \mathcal{F}_n$, then, as per definition of inductive inequalities and given that $p_i$ is by assumption $\Omega$-minimal, exactly one of the formulas in $\overline{\alpha}$  contains an $\epsilon$-critical node, and all the others (if any) are pure. Assume that the critical node is in $\alpha_j$ for $1\leq j\leq n_f$. Then, applying the appropriate $f$-residuation rule transforms $f(\overline{\alpha})\leq \mathsf{Pure}$ into either $\alpha_j\leq f^{\sharp}_j(\alpha_1, \ldots, \mathsf{Pure}, \ldots, \alpha_{n_f})$ 
if $\epsilon_f = 1$ or 
$f^{\sharp}_j(\alpha_1, \ldots, \mathsf{Pure}, \ldots, \alpha_{n_f}) \leq \alpha_j$ if $\epsilon_f = \partial$, yielding an $(\Omega, \epsilon)$-stripped system, and the inductive hypothesis is applicable.


The case when $+ \delta = + g(\overline{\delta})$ with $g \in \mathcal{G}_n$ is similar.
%
%
We still need to consider the cases when $- \alpha$ is $- f(\alpha')$ for some $f \in \mathcal{F}_r$, and when $+ \delta$ is $+ g(\delta')$ for some $g \in \mathcal{G}_r$. As these cases are order-dual, we will only treat the latter. If $\epsilon_g = 1$, then the residuation rule replaces $\mathsf{Pure} \leq g(\delta')$ with the two inequalities $\mathsf{Pure} \leq g(\top)$ and $\Diamondblack_{g} \mathsf{Pure} \leq \delta'$, to which the inductive hypothesis is applicable. Notice that the introduced side condition is pure. If $\epsilon_g = \partial$, then the residuation rule replaces $\mathsf{Pure} \leq g(\delta')$ with  $\mathsf{Pure} \leq g(\bot)$ and $\delta \leq {\brhd} \mathsf{Pure} \leq \delta'$, to which the inductive hypothesis is applicable. The introduced side condition is again pure.

In all the cases considered above, rules are applied only to non-pure inequalities, so the safety of these rule applications follow immediately from the assumption that all occurring side conditions are pure.
\end{proof}

\begin{lem}\label{App:Of:Ackermann:Lemma}
Applying the appropriate Ackermann-rule with respect to $p_i$ to an $(\Omega, \epsilon)$-stripped system which is Ackermann-ready with respect to $p_i$, again yields an $(\Omega, \epsilon)$-stripped system.
\end{lem}
\begin{proof}
Let $(S, \phi \le \psi)$ be an $(\Omega, \epsilon)$-stripped system which is Ackermann-ready with respect to $p_i$. We only consider the case in which the right Ackermann-rule is applied, the case for the left Ackermann-rule being dual. This means that $S = \{\alpha_k\leq p \mid 1 \leq k \leq n \} \cup \{\beta_j(p_i) \leq \gamma_{j}(p_i) \mid 1 \leq j \leq m \}$ where the $\alpha$s are pure and the $-\beta$s and $+\gamma$s contain no $+ p_i$ nodes. Let us denote the pure formula $\bigvee_{k=1}^{n} \alpha_k$ by $\alpha$. It is sufficient to show that for each $1 \leq j \leq m$, the trees $- \beta(\alpha / p_i)$ and $+ \gamma(\alpha / p_i)$ satisfy the conditions of Definition \ref{Stripped:Def}. Conditions 2  follows immediately once we notice that, since $\alpha$ is pure and is being substituted everywhere for variable occurrences corresponding to non-critical nodes, $- \beta(\alpha / p_i)$ and $+ \gamma(\alpha / p_i)$ have exactly the same $\epsilon$-critical paths as $- \beta(p_i)$ and $+ \gamma(p_i)$, respectively. Condition 1, namely that $- \beta(\alpha / p_i)$ and $+ \gamma(\alpha / p_i)$ are $(\Omega, \epsilon)$-inductive, also follows using additionally the observation that all new paths that arose from the substitution are variable free.
\end{proof}

\begin{thm}\label{thm:ALBA:Ind:Succss}
$\mathrm{ALBA}$ succeeds on all inductive inequalities, and safe, pivotal executions suffice.
\end{thm}
\begin{proof}
Let $\phi \leq \psi$ be an $(\Omega, \epsilon)$-inductive inequality. By Lemma \ref{Pre:Process:Lemma}, applying preprocessing yields a finite set of definite $(\Omega, \epsilon)$-inductive inequalities, each of which gives rise to  an initial system $(\emptyset, \phi' \leq \psi')$. Since preprocessing does not introduce side conditions, each initial system is free of them. By Lemma \ref{Stripping:Lemma}, pivotal applications of the approximation rules convert this system into an $(\Omega, \epsilon)$-stripped system, say $(S_1, \phi'' \leq \psi'')$. These systems are still free of side conditions, since approximation rules do not introduce side conditions. Now pick any $\Omega$-minimal variable occurring in $(S_1, \phi'' \leq \psi'')$, say $p$. By lemma \ref{Ackermann:Ready:LEmma}, the system can be made Ackermann-ready with respect to $p$ by applying residuation and splitting rules while introducing only pure side conditions. Now apply the appropriate Ackermann-rule to eliminate $p$ from the system. Since all occurring side conditions are pure, this is a safe rule application. By lemma \ref{App:Of:Ackermann:Lemma}, the result is again an $(\Omega, \epsilon)$-stripped system, now containing one propositional variable less, and to which lemma \ref{Ackermann:Ready:LEmma} can again be applied. This process is iterated until all occurring propositional variables are eliminated and a pure system is obtained.
\end{proof}

\section{Constructive canonicity}\label{sec:canonicity}
This section is devoted to proving that all inequalities on which constructive ALBA succeeds are {\em constructively canonical} (i.e.\ their validity is preserved under the constructive canonical extension). The arguments presented here amalgamate the constructive generalizations of the canonicity arguments used in \cite{CoPaSoZh15,PaSoZh15b} and \cite{ConPal13}. Indeed,   in the current setting, we need to appeal to (the constructive counterparts of) both the pivotality and safety of ALBA execution. 

Fix an $\mathcal{L}_\mathrm{LE}$-algebra  $\bba$ and let $\bbas$ be its canonical extension. We write $\bbas \models_{\mathbb{A}} \phi \leq \psi$ to indicate that $\bbas, v \models \phi \leq \psi$ for all \emph{admissible} assignments $v$, as defined in Subsection \ref{Subsec:Expanded:Land}, page \pageref{admissible:assignment}. Recall that pivotal executions of ALBA are defined on page \pageref{pivotal:approx:rule:application} in Section \ref{sec:constructive ALBA}.

\begin{thm}\label{Thm:ALBA:Canonicity}
All $\mathcal{L}_\mathrm{LE}$-inequalities on which ALBA succeeds safely and pivotally are canonical.
\end{thm}
\begin{proof}
Let $\phi \leq \psi$ be an $\mathcal{L}_\mathrm{LE}$-inequality for which some execution of ALBA which is both safe and pivotal exists. The required canonicity proof is summarized in the following U-shaped diagram:\\

\begin{center}
\begin{tabular}{l c l}
$\bba \ \ \models \phi \leq \psi$ & &$\bbas \models \phi \leq \psi$\\
$\ \ \ \ \ \ \ \ \ \ \ \Updownarrow$ \\
$\bbas \models_{\bba} \phi \leq \psi$ & & \ \ \ \ \ \ \ \ \ \ \  $\Updownarrow $\\
$\ \ \ \ \ \ \ \ \ \ \ \Updownarrow$\\
$\bbas \models_{\bba} \mathsf{ALBA}(\phi \leq \psi)$
&\ \ \ $\Leftrightarrow$ \ \ \ &$\bbas \models \mathsf{ALBA}(\phi \leq \psi)$
\end{tabular}
\end{center}
The uppermost bi-implication on the left is given by the definition of validity on algebras and $\bba$ being  a subalgebra of $\bbas$. The lower bi-implication on the left is given by Proposition \ref{Propo:Crtns:CanExtns} below. The horizontal bi-implication follows from the facts that, by assumption, $\mathsf{ALBA}(\phi \leq \psi)$ is pure, and that, when restricted to pure formulas, the ranges of admissible and arbitrary assignments coincide. The bi-implication on the right is given by Theorem \ref{Crctns:Theorem}.
\end{proof}
Towards the proof of  Proposition \ref{Propo:Crtns:CanExtns}, the following definitions and lemmas will be useful:
\begin{defi}[Syntactically closed and open $\mathcal{L}_{\mathrm{LE}}^+$-formulas]\label{def:syn:closed:and:open}
\begin{enumerate}
\item A $\mathcal{L}_{\mathrm{LE}}^+$formula is \emph{syntactically closed} if all occurrences of nominals, $f^\sharp_i$ for any $f\in \mathcal{F}_n$ with $\varepsilon_f(i)=\partial$, $g_i^{\flat}$ for any $g\in \mathcal{G}_n$ with $\varepsilon_g(i)=1$, $\blacktriangleleft_{f}$ for any $f\in \mathcal{F}_r$ with $\varepsilon_f=\partial$,  $\Diamondblack_{g}$ for any $g\in \mathcal{G}_r$ with $\varepsilon_g=1$ are positive,
and all occurrences of co-nominals, $f^\sharp_i$ for any $f\in \mathcal{F}_n$ with $\varepsilon_f(i)=1$, $g_i^{\flat}$ for any $g\in \mathcal{G}_n$ with $\varepsilon_g(i)=\partial$, $\blacktriangleright_{g}$ for any $g\in \mathcal{G}_r$ with $\varepsilon_g=\partial$, and  $\blacksquare_{f}$ $f\in \mathcal{F}_r$ with $\varepsilon_f=1$ are negative;

\item A $\mathcal{L}_{\mathrm{LE}}^+$formula is \emph{syntactically closed} if all occurrences of nominals, $f^\sharp_i$ for any $f\in \mathcal{F}_n$ with $\varepsilon_f(i)=\partial$, $g_i^{\flat}$ for any $g\in \mathcal{G}_n$ with $\varepsilon_g(i)=1$, $\blacktriangleleft_{f}$ for any $f\in \mathcal{F}_r$ with $\varepsilon_f=\partial$,  $\Diamondblack_{g}$ for any $g\in \mathcal{G}_r$ with $\varepsilon_g=1$ are negative,
and all occurrences of co-nominals, $f^\sharp_i$ for any $f\in \mathcal{F}_n$ with $\varepsilon_f(i)=1$, $g_i^{\flat}$ for any $g\in \mathcal{G}_n$ with $\varepsilon_g(i)=\partial$, $\blacktriangleright_{g}$ for any $g\in \mathcal{G}_r$ with $\varepsilon_g=\partial$, and  $\blacksquare_{f}$ $f\in \mathcal{F}_r$ with $\varepsilon_f=1$ are positive.
\end{enumerate}
A $\mathcal{L}_{\mathrm{LE}}^+$formula which is both syntactically open and syntactically closed is called \emph{syntactically clopen}.

A system $S$ of $\mathcal{L}_\mathrm{LE}^{+}$-inequalities  is {\em compact-appropriate} if the left-hand side of each non-pure inequality in $S$ is syntactically closed and the  right-hand side of each non-pure inequality in $S$ is syntactically open.
\end{defi}

\begin{defi}
A system $S$ of $\mathcal{L}_\mathrm{LE}^{+}$-inequalities  is {\em topologically adequate} when the following conditions hold: for every $f\in \mathcal{F}_r$ and $g\in \mathcal{G}_r$,
\begin{enumerate}
\item if $\varepsilon_f = 1$ and $\phi\leq \blacksquare_f \psi$ is in $S$, then $f(\bot)\leq \psi$ is in $S$;
\item if $\varepsilon_g = 1$ and $\Diamondblack_g\phi\leq  \psi$ is in $S$, then $g(\top)\geq \phi$ is in $S$;
\item if $\varepsilon_f = \partial$ and ${\blacktriangleleft}_f\phi\leq \psi$ is in $S$, then $f(\top)\leq \phi$ is in $S$;
\item if $\varepsilon_g = \partial$ and $\phi\leq {\blacktriangleright}_g \psi$ is in $S$, then $g(\bot)\geq \psi$ is in $S$.
\end{enumerate}
\end{defi}

\begin{lem}
\label{prop:top:adequacy: invariant}
Topological adequacy is an invariant of safe executions of ALBA.
\end{lem}
\begin{proof}
Preprocessing vacuously preserves the topological adequacy of any input inequality. The topological adequacy is vacuously satisfied up to the first application of an  adjunction rule  introducing any connective $\blacksquare_f$ for  $f\in \mathcal{F}_r$ with $\varepsilon_f = 1$, or $\Diamondblack_g$ for  $g\in \mathcal{G}_r$ with $\varepsilon_g = 1$, or $\blhd$ for  $f\in \mathcal{F}_r$ with $\varepsilon_f = \partial$, or $\brhd$ for  $g\in \mathcal{G}_r$ with $\varepsilon_g = \partial$. Each such application introduces two inequalities, one of which contains the new black connective, and the other one is exactly the side condition required by the definition of topological adequacy for the first inequality to be non-offending. Moreover, at any later stage, safe executions of ALBA do not modify the side conditions, except for substituting minimal valuations. This, together with the fact that ALBA does not contain any rules which allow to manipulate any of $\blacksquare_g, \Diamondblack_f, \blhd, \brhd$, guarantees the preservation of topological adequacy. Indeed,  if e.g.\ $f(\bot)\leq \psi$ and $\phi\leq \blacksquare_f\psi$ are both in a topologically  adequate quasi-inequality, then the variables occurring in $\psi$ in both inequalities have the same polarity, and in a  safe execution, the only way in which they could be modified is if they  both receive the same minimal valuations under applications of Ackermann rules. Hence,  after such an application, they would respectively be transformed into $f(\bot)\leq \psi'$ and $\phi'\leq \blacksquare_f\psi'$ for the {\em same} $\psi'$. Thus, the topological adequacy of the quasi-inequality is preserved.
\end{proof}

\begin{lem}
\label{lemma:good:shape: invariant}
Compact-appropriateness is an invariant of ALBA$^e$ executions.
\end{lem}
\begin{proof}
The proof proceeds by checking, case-by-case, that each rule of ALBA preserves compact-appropriateness, and is entirely analogous to the proof of \cite[Lemma 9.5]{ConPal12}.
\end{proof}

\begin{lem}\label{Syn:Shape:Lemma}
If the system $(S, \mathsf{Ineq})$ is obtained by running ALBA pivotally on some $\mathcal{L}_\mathrm{LE}$-inequality $\phi \leq \psi$, then $(S, \mathsf{Ineq})$ is compact-appropriate and topologically adequate.
\end{lem}
\begin{proof}
Since $\phi \leq \psi$ comes from the base language $\mathcal{L}_\mathrm{LE}$, it is immediate that $\phi$ and $\psi$ are both syntactically clopen. Since preprocessing does not introduce any symbols not in $\mathcal{L}_\mathrm{LE}$, in any inequality $\phi' \leq \psi'$ resulting from the preprocessing, $\phi'$ and $\psi'$ are both syntactically clopen. Thus, the claim holds for each initial system $(\varnothing, \phi' \leq \psi')$. In order to complete the proof, it now remains to check that each reduction rule preserves the desired syntactic shape. This is straightforward for the residuation rules.
By way of illustration we will consider the right-negative approximation rule and the righthand Ackermann-rule.

The right-negative approximation rule transforms a system $(S, s \leq t'(\gamma / !x))$ with  $-x \prec -t'(!x)$ into $(S \cup \{\gamma \leq \cnomm \}, s \leq t'(\cnomm/!x))$. If $\gamma$ belongs to the original language, it is both syntactically clopen. Hence, $\gamma \leq \cnomm$ is of the right shape. Moreover,  $-x \prec -t'(!x)$ implies that $x$ occurs positively in $t'$, which by assumption is syntactically open. Hence, $ t'(\cnomm/!x)$ is syntactically open. If $\gamma$ does not belong to $\mathcal{L}_\mathrm{LE}$, then the assumption that all approximation rules are applied pivotally guarantees that $\gamma$ must be a conominal $\cnomn$ (which has been introduced by some previous application of the same approximation rule). Hence $\gamma\leq \cnomm$ is pure.

As for the righthand Ackermann rule, it transforms a system
\[
(\{\alpha_1 \leq p, \ldots, \alpha_n \leq p, \beta_1 \leq \gamma_1, \ldots, \beta_m \leq \gamma_m \}, \mathsf{Ineq})
\]
into
\[
(\{\beta_1(\alpha/p) \leq \gamma_1(\alpha / p), \ldots, \beta_m(\alpha / p) \leq \gamma_m(\alpha / p) \}, \mathsf{Ineq}),
\]
where $\alpha = \alpha_1 \vee \cdots \vee \alpha_n$. Firstly, note that all the pure inequalities among the $\beta_i \leq \gamma_i$ remain unaffected by the rule, and hence remain pure. For non-pure $\beta_i \leq \gamma_i$, we have by assumption that $\beta_i$ is syntactically closed and positive in $p$ while $\gamma_i$ is syntactically open and negative in $p$. Thus, in $\beta_i(\alpha / p)$ each occurrence of a symbol within any occurrence of the subformula $\alpha$ has the same polarity as it had in $\alpha$ before substitution. Hence, since $\alpha$ is syntactically closed, $\beta_i(\alpha /p)$ is syntactically closed. Similarly, in $\gamma_i(\alpha /p)$ any occurrence of a symbol within each occurrence of the subformula $\alpha$ has the opposite polarity from that which it had in $\alpha$ before substitution. Hence, $\gamma_i(\alpha /p)$ is syntactically open. \end{proof}

\begin{prop}[Correctness of safe and pivotal executions of \textrm{ALBA} under admissible assignments]\label{Propo:Crtns:CanExtns} If \textrm{ALBA} succeeds in reducing an $\mathcal{L}_\mathrm{LE}$-inequality $\phi \leq \psi$ through some safe and pivotal execution and yields $\mathsf{ALBA}(\phi \leq \psi)$, then $\bbas \models_{\bba} \phi \leq \psi$ iff $\bbas \models_{\bba} \mathsf{ALBA} (\phi \leq \psi)$.
\end{prop}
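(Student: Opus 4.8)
The plan is to re-run the argument of Theorem~\ref{Crctns:Theorem} while tracking validity under \emph{admissible} assignments throughout: that is, to establish the chain of equivalences (\ref{Crct:Eqn1})--(\ref{Crct:Eqn5}) with $\bbas\models$ systematically replaced by $\bbas\models_{\bba}$. As there, the preprocessing steps are harmless under admissible assignments: the rules for elimination of uniform variables only substitute $\top$ and $\bot$, which lie in $\bba$, while distribution and splitting do not depend on the assignment at all. So everything reduces to showing that each reduction rule preserves validity under admissible assignments, i.e.\ that $\bbas\models_{\bba}\forall\overline{var'}\,[\bigamp S'\Rightarrow\mathsf{Ineq}']$ iff $\bbas\models_{\bba}\forall\overline{var}\,[\bigamp S\Rightarrow\mathsf{Ineq}]$ whenever $(S,\mathsf{Ineq})$ is obtained from $(S',\mathsf{Ineq}')$ by a single rule.

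For every rule other than the Ackermann-rules this goes through essentially verbatim as in the proof of Proposition~\ref{Rdctn:Rls:Crctnss:Prop} (cf.\ Remark~\ref{Adapt:To:Cncl:Ext:Remark}): the distribution identities of Lemma~\ref{Distribution:Lemma}, the adjunction properties of the connectives in $\mathcal{F}'\cup\mathcal{G}'$, and the decompositions of Lemma~\ref{lemma: f and diamond f coincide on non bottom els} all hold for arbitrary elements of $\bbas$, hence in particular for the values taken under admissible assignments; moreover none of these rules changes the value assigned to a propositional variable, and the nominals and co-nominals introduced by approximation rules are assigned closed, resp.\ open, elements, so admissibility is preserved along the way. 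One direction of the Ackermann step is equally painless: the implication ``post-Ackermann system valid under admissible assignments $\Rightarrow$ pre-Ackermann system valid under admissible assignments'' is exactly the bottom-to-top half of Lemmas~\ref{Ackermann:Right:Lemma} and \ref{Ackermann:Left:Lemma}, which uses only the monotonicity of the $\beta_j$'s and the antitonicity of the $\gamma_j$'s in $p$ and leaves the (admissible) assignment to $p$ in place.

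The real work is the converse implication for the Ackermann step, where admissibility genuinely bites: the minimal valuation $\bigvee_i v(\alpha_i)$ (resp.\ $\bigwedge_i v(\alpha_i)$) that one would like to substitute for $p$ need not lie in $\bba$, so $v[p:=\bigvee_i\alpha_i]$ is in general not admissible and cannot be fed to the hypothesis directly. To get around this I would first invoke Lemma~\ref{Syn:Shape:Lemma}: since the execution is pivotal and safe, every system it produces---in particular the one to which the Ackermann-rule is applied---is compact-appropriate and topologically adequate. Compact-appropriateness guarantees that the $\alpha_i$ are syntactically closed, so that, by an Esakia-type lemma asserting that syntactically closed (resp.\ open) $\mathcal{L}_\mathrm{LE}^{+}$-formulas take values in $\kbbas$ (resp.\ $\obbas$) under admissible assignments, the element $u:=\bigvee_i v(\alpha_i)$ is closed; it also guarantees that the $\beta_j$ (resp.\ $\gamma_j$) are syntactically closed and positive (resp.\ syntactically open and negative) in $p$. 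The heart of the matter is then a \emph{topological Ackermann lemma}: for $\beta$ syntactically closed and positive in $p$ and $k\in\kbbas$, $v[p:=k](\beta)=\bigwedge\{v[p:=a](\beta)\mid a\in\bba,\ k\leq a\}$, and dually, with meets replaced by joins, for $\gamma$ syntactically open and negative in $p$. Granting this, the inequalities $v[p:=u](\beta_j)\leq v[p:=u](\gamma_j)$ become inequalities between a closed and an open element, and the compactness of $\bba$ in $\bbas$ can be used to extract an admissible variant of $v$ satisfying the antecedent of the pre-Ackermann system; since $p$ does not occur in $\mathsf{Ineq}$, this yields $v\models\mathsf{Ineq}$, as required.

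I expect the proof of the topological Ackermann lemma to be the main obstacle. It must be carried out by induction on formula complexity, and the delicate cases are precisely the connectives coming from $\mathcal{F}_r\cup\mathcal{G}_r$ together with their adjoints $\blacksquare_f,\Diamondblack_g,\blhd,\brhd$: these do not preserve closed/open elements, and it is here that topological adequacy---hence the \emph{safety} of the execution, via Lemma~\ref{prop:top:adequacy: invariant}---enters, since every such black connective occurs accompanied by the side condition ($f(\bot)\leq\psi$ and its variants) that allows the corresponding inequality to be rewritten back in terms of the original regular operation, which does behave well with respect to $\kbbas$ and $\obbas$. Organising this induction, together with the bookkeeping needed to keep the compactness step legitimate (directedness of the relevant families of admissible approximants, and the precise syntactic shape of $\mathsf{Ineq}$), is where the arguments of \cite{ConPal13} and \cite{CoPaSoZh15} have to be amalgamated and transposed to the constructive meta-theory.
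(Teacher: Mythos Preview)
Your proposal is correct and follows essentially the same route as the paper: re-run the correctness argument under admissible assignments, observe that only the Ackermann step is problematic, and invoke compact-appropriateness and topological adequacy (guaranteed by pivotality and safety via Lemmas~\ref{prop:top:adequacy: invariant} and~\ref{Syn:Shape:Lemma}) to make the Ackermann step go through. The paper packages what you call the ``topological Ackermann lemma'' as the black-box Propositions~\ref{Right:Ack} and~\ref{Left:Ack} in the appendix, whose proofs rely on exactly the Intersection Lemma~\ref{MJ:Pres} and compactness argument you sketch.
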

\begin{proof}
It has already been indicated in Remark \ref{Adapt:To:Cncl:Ext:Remark} that the proof is essentially the same as that of Theorem \ref{Crctns:Theorem}. The only difficulty that arises  is that the Ackermann-rules are generally not invertible under admissible assignments (cf.\ \cite[Example 9.1]{ConPal12}). However, by Propositions \ref{Right:Ack} and \ref{Left:Ack}, in the special case that the system is topologically adequate and the left and right hand sides of all non-pure inequalities involved in the application of an Ackermann-rule are, respectively, syntactically closed and open, the rule is sound and invertible
under admissible assignments. By Lemmas \ref{prop:top:adequacy: invariant} and \ref{Syn:Shape:Lemma}, these requirements are always satisfied when the rule is applied in safe and pivotal executions of ALBA.
\end{proof}

Our main theorem  follows as a corollary of Theorem \ref{thm:ALBA:Ind:Succss} and Proposition \ref{Propo:Crtns:CanExtns}.

\begin{thm}[Main]
All inductive $\mathcal{L}_\mathrm{LE}$-inequalities are constructively canonical.
\end{thm}

\section{Conclusions and further directions}\label{Sec:Conclusions}

\paragraph{About the setting of the present paper.}
We have chosen to work in the setting of arbitrary lattices expanded with normal and/or regular operations. However, all results in this paper generalize readily to posets expanded with operations of similar order-theoretic properties. Indeed, just like the canonical extension of a lattice, the canonical extension of a poset is a complete lattice into which the poset embeds in a dense and compact way \cite{DunnGP05}. Thus, all the properties we need to prove the soundness of (constructive) ALBA are still available in the poset case. This remains true even when one takes into account that, for posets, the notion of canonical extension is parametric in the choice of a definition of filters and ideals \cite{Mo14,gehrke2013delta1}.

On the other hand, the choice of the general (non-distributive) lattice setting, rather than the distributive lattice setting, is particularly advantageous for the formulation of a constructive version of ALBA. Indeed, the approximation rules employed in ALBA in the distributive setting \cite{ConPal12} make essential use of the fact that the canonical extension there is completely join-generated (resp.\ meet-generated) by its completely join-{\em prime} (resp.\ meet-{\em prime}) elements. 
The approximation rules of distributive ALBA rely on the primeness of nominals and co-nominals and therefore fail utterly in a constructive environment, where this generation by sets of join- and meet-prime elements is unavailable. In the (non-constructive) non-distributive setting we still have complete join- and meet-generation by completely join- and meet-irreducible elements, but these elements need no longer be completely join- and meet-prime. Moreover, the soundness of the approximation rules of non-distributive ALBA \cite{ConPal13} relies only on the fact that completely join- and meet-irreducibles are complete join- and meet generators, and remain sound when we replace them with any other sets of complete join- and meet generators. Such a replacement is, in fact, all we needed to do in order to salvage the soundness of these rules for the constructive case.

Our choice to extend our treatment to {\em regular} modal operators  reflects the importance of these operators  from the viewpoint of applications in formal philosophy,  AI, and the social sciences. Indeed,  many influential and very diversely motivated formalisms  have appeared, also very recently (STIT logics \cite{STIT},  logics for conditionals \cite{stalnaker1968conditionals}, and for agent organizations \cite{dignum2012logic} are a sample from the three areas), in which necessitation might not be plausible. Also in classical modal logic, necessitation has been observed to be incompatible  w.r.t.\ certain core readings of the modal operators, such as the epistemic and the deontic (cf.\ Lemmon's work \cite{lemmon1957}). Regular modal logics cannot be accommodated semantically  by  standard relational models of normal modal logics, and a variety of competing semantic frameworks for regular logics exists in the literature, ranging from impossible worlds \cite{Kr65} to neighbourhood models \cite{HH03}. This poses the problem that a separate Sahlqvist-type theory would need to be developed for each of these semantic frameworks. Via duality, the results in the present paper immediately imply the canonicity of inductive formulas of logical frameworks including regular operators in each of these semantic frameworks, which sorts out the canonicity issue simultaneously in each such setting.
The logical systems mentioned above are mostly on a classical propositional base. Specifically concerning the lattice-based setting, an epistemic interpretation has been proposed very recently for the logic of lattice expansions with unary modal operators \cite{CFPPTW}. Specifically, this logic has been proposed  as a formalism for reasoning about {\em categories} (understood as `identity classes' of objects, and formalized as concepts in formal concept analysis \cite{ganter2012formal}) and the way they are perceived, known and understood by different agents. In view of this interpretation, the treatment of regular modal operators on general lattices developed in the present paper contributes to a more accurate and flexible modelling of the epistemic reasoning of agents concerning categories.

The relational semantics developed in \cite{CFPPTW} is based on enriched polarities (referred to as polarity-based frames or enriched formal contexts) upon which certain conditions are imposed in order to ensure a duality with perfect lattice expansions. These conditions are dropped in the subsequent paper \cite{TarkPaper},  leading to a constructive adjunction with complete (but not necessarily perfect) lattices and making the results of the present paper directly applicable. The epistemic interpretation of \cite{CFPPTW} and \cite{TarkPaper} is not inherent to polarity-based semantics, which can also support other interpretations, such as those proposed in e.g.\ in \cite{Rough:concepts}, where the semantic framework of polarity-based frames for lattice-based modal logic are used to generalize Rough Set Theory \cite{pawlak2002rough} to ``rough formal concept theory''. 

Alternative, graph-based semantics for non-distributive logics, also based on constructive adjunctions with complete lattice expansions, was introduced and studied in \cite{InfoEntrop:2019} and \cite{Non:dist:sem:to:meaning} where, in contrast to the polarity based semantics, formulas again denote propositions admitting truth values rather than denoting concepts, and where a natural hyper-constructivist reading of non-distributive logics emerges. Both the polarity and graph-based semantic paradigms can be naturally generalized to many-valued version, as has been done in \cite{competing:theories}, \cite{Socio-Political} and \cite{vague:categories}.   

\paragraph{Constructive canonicity and possibility semantics.} Possibility semantics \cite{Humberstone} is an alternative and more general interpretation of classical (modal) logic on relational structures the states of which are not constrained  to encode complete information. Thus, in a possibility model $M$, the consistent set $\{\phi\in \mathcal{L}\mid M, w\Vdash \phi\}$ does not need to be  maximal  for every state $w$ in $M$. In this setting, the completeness of classical (modal) logic can be proven without appeal to any of the equivalent formulations of the axiom of choice. In \cite{Ho16}, duality, correspondence and canonicity are developed for classical modal logic on possibility frames, where  {\em filter-canonicity} is understood as the preservation of the validity of formulas from filter-descriptive (cf.\ \cite[Definition 5.39]{Ho16}) to full possibility frames.
As observed by Holliday, the BAO dually arising from the filter frame\footnote{By definition, any filter frame is a full possibility frame.} (cf.\ \cite[Definition 5.30]{Ho16}) associated with any BAO $\mathbb{A}$ is (isomorphic to) the constructive canonical extension of $\mathbb{A}$ (cf.\ \cite[Theorem 5.46]{Ho16}). Hence, it follows from the duality results developed in the same paper that filter-canonicity corresponds algebraically  to the notion of constructive canonicity. Thus, again via duality, the constructive canonicity of inductive inequalities provides a straightforward route to proving that  inductive formulas of classical (modal) logic are filter-canonical (cf.\ \cite[Theorem 7.20]{Ho16}). The  algorithmic treatment of  canonicity and correspondence for possibility semantics is developed in \cite{YZ16}. 

\paragraph{Further directions.} In \cite{GMPTZ}, the main tools of unified correspondence theory have been used to characterize the axiomatic extensions of basic DLE-logics which admit proper display calculi in terms of a proper subclass of inductive inequalities (referred to as {\em analytic inductive inequalities}), and to effectively compute analytic rules corresponding to these axioms. Very recently, the class of analytic inductive inequalities has been shown to have interesting formal-topological properties, which can be formulated in terms of a generalized  form of (both non-constructive and constructive) canonicity, referred to as {\em slanted canonicity}   \cite{de_rudder_slanted_2020}.
These results and insights have formed the base for the systematic development of analytic sequent calculi (cf.\ e.g.\ \cite{GP:linear,GP:lattice,BGPTW,Inquisitive}) endowed with a package of basic properties (soundness, completeness, conservativity, cut elimination and subformula property) which can be  established uniformly also thanks to  unified correspondence techniques. In particular, the canonicity of inductive inequalities is crucial for the uniform proof of soundness and conservativity of these calculi. Generalizing canonicity to a constructive setting opens the possibility of endowing these calculi with a category-theoretic semantics,  an amenable environment for addressing the long-standing issue of establishing a theory of isomorphisms of derivations. In particular, in \cite{GP:lattice}, a proper display calculus has been introduced for the basic environment of the logic of general (i.e.\ not necessarily distributive) lattices, which can serve as a base for investigating these issues for LE logics.

\bibliographystyle{abbrv}
\bibliography{Constructive_Revision2020}

\appendix

\section{Appendix}\label{sec:appendix}
Section \ref{ssec: conditional esakia lemma} below  adapts  parts of \cite[Section 4.1]{CoPaSoZh15} to the present LE setting. Sections \ref{ssec:intersection lemma} and \ref{ssec:topological ackermann} are based on \cite[Section 10]{CoPaSoZh15} and adapt it to the present  setting. We include them for the sake of self-containedness.

Throughout this section, we let $\bba$ and $\bbb$ be LEs, and $\bbas$ and $\bbbs$ be their constructive canonical extensions.

\subsection{Constructive conditional Esakia lemma}\label{ssec: conditional esakia lemma}
\begin{defi}
For all maps $f, g:\bbas\rightarrow \bbbs$,
\begin{enumerate}
\item
$f$ is {\em positive closed Esakia} if it preserves down-directed meets of closed elements of $\bbas$, that is: $$f(\bigwedge\{c_i : i\in I\})=\bigwedge\{f(c_i): i\in I\}$$ for any downward-directed collection $\{c_i : i\in I\}\subseteq \kbbas$;
\item
$g$ is {\em positive open Esakia} if it preserves upward-directed joins of open elements of $\bbas$, that is: $$g(\bigvee\{o_i : i\in I\})=\bigvee\{g(o_i): i\in I\}$$ for any upward-directed collection $\{o_i : i\in I\}\subseteq O(\ca)$.
\item
$f$ is {\em negative closed Esakia} if it reverses upward-directed joins of open elements  of $\bbas$, that is: $$f(\bigvee\{o_i : i\in I\})=\bigwedge\{f(o_i): i\in I\}$$ for any  upward-directed collection $\{o_i : i\in I\}\subseteq O(\ca)$;
\item
$g$ is {\em negative open Esakia} if it reverses down-directed meets of closed elements of $\bbas$, that is: $$g(\bigwedge\{c_i : i\in I\})=\bigvee\{g(c_i): i\in I\}$$ for any downward-directed collection $\{c_i : i\in I\}\subseteq \kbbas$.

\end{enumerate}
\end{defi}

\begin{lem}\label{lem:idealtoidealmaps}
Let $f,g:\bbas\to \bbbs$. For any $o \in \obbbs$ and $k \in \kbbbs$,
\begin{enumerate}
\item
if $f$ is positive closed Esakia, $f(a)\in \kbbbs$ for every $a\in \bba$, and  $f(\bot)\leq o$, then $\blacksquare_f (o) = \bigvee\{ a\in \bba \mid a \leq \blacksquare_{f}(o) \}\in \obbas$. 
\item
if $g$ is positive open Esakia, $g(a)\in \obbbs$ for every $a\in \bba$, and $k\leq g(\top)$, then $\Diamondblack_g (k) = \bigwedge\{ a\in \bba \mid \Diamondblack_{g}(k)\leq a \}\in  \kbbas$. 
\item
if $f$ is negative closed Esakia, $f(a)\in \kbbbs$ for every $a\in \bba$, and  $f(\top)\leq o$, then ${\blacktriangleleft}_f (o) = \bigwedge\{ a\in \bba \mid {\blacktriangleleft}_{f}(o)\leq a \}\in \kbbas$. 
\item
if $g$ is negative open Esakia, $g(a)\in \obbbs$ for every $a\in \bba$, and $k\leq g(\bot)$, then ${\blacktriangleright}_g (k) = \bigvee\{ a\in \bba \mid a\leq {\blacktriangleright}_{g}(k) \}\in  \obbas$. 

\end{enumerate}
\end{lem}
\begin{proof}
1. To prove the statement, it
is enough to show that if $c\in \kbbas$ and $c\leq \blacksquare_f (o)$, then $c\leq a$ for some $a\in
\bba$ such that $a\leq {\blacksquare}_f (o)$.  By adjunction, $c\leq \blacksquare_f (o)$ is equivalent to $\Diamond_f(c)\leq o$. Then, by assumption,  $f(\bot)\vee \Diamond_f(c)\leq o$. The definition of $\Diamond_f$ implies that $f(c)\leq f(\bot)\vee \Diamond_f(c)\leq o$.
%
Since $f$ is closed Esakia, $f(c) = \bigwedge\{f(a)\ |\ a\in \mathbb{A}\mbox{ and } c\leq a\}$. Moreover, by assumption, $f(a)\in \kbbbs$ for every $a\in \mathbb{A}$. Hence by compactness, $\bigwedge_{i=1}^n f(a_i) \leq o$ for some $a_1, \ldots, a_n\in \mathbb{A}$ s.t.\ $c \leq a_i$, $1\leq i\leq n$.
Let $a = \bigwedge_{i=1}^n a_i$. Clearly, $c\leq a$ and $a\in \mathbb{A}$; moreover, by the monotonicity of $f$ and Lemma \ref{lemma:LE algebras interpret normalizations}.1, we have $\Diamond_f (a)\leq f(a)\leq  \bigwedge_{i=1}^n f (a_i) \leq o$, and hence, by adjunction, $a\leq \blacksquare_f (o)$.

Clauses 2, 3 and 4 are order-variants of 1.
\end{proof}

\noindent The following Esakia-type result is similar to \cite[Proposition 30]{CoPaSoZh15}. There it was called {\em conditional Esakia lemma}, since, unlike the usual versions, it crucially relies on additional assumptions (on $f(\bot)$ and $g(\top)$).

\begin{prop}[Conditional Esakia Lemma]\label{prop:Esa}
 Let $f,g:\bbas\to \bbbs$ such that $f$ is closed Esakia and $f(a)\in \kbbbs$ for every $a\in \bba$, and $g$ is open Esakia and $g(a)\in \obbbs$ for every $a\in \bba$. For any upward-directed collection $\mathcal{O} \subseteq \obbbs$ and any downward-directed collection $\mathcal{C} \subseteq \kbbbs$,
\begin{enumerate}
\item
if $f(\bot)\leq \bigvee \mathcal{O}$, then $\blacksquare_f (\bigvee \mathcal{O}) = \bigvee\{ \blacksquare_f o \mid o \in \mathcal{O} \}$. Moreover, there exists some upward-directed subcollection  $\mathcal{O}'\subseteq \mathcal{O}$ such that $\bigvee\mathcal{O}' = \bigvee \mathcal{O}$, and $\blacksquare_f o\in O(\ca)$ for each $o\in \mathcal{O}'$, and $\bigvee\{ \blacksquare_f o \mid o \in \mathcal{O}' \} = \bigvee\{ \blacksquare_f o \mid o \in \mathcal{O} \}$.  
\item
if $g(\top)\geq \bigwedge \mathcal{C}$, then $\Diamondblack_g (\bigwedge \mathcal{C}) = \bigwedge\{ \Diamondblack_g c \mid c \in \mathcal{C} \}$. Moreover, there exists some downward-directed subcollection  $\mathcal{C}'\subseteq \mathcal{C}$ such that $\bigwedge\mathcal{C}' = \bigwedge \mathcal{C}$, and $\Diamondblack_g c\in K(\ca)$ for each $c\in \mathcal{C}'$, and $\bigwedge\{ \Diamondblack_g c \mid c \in \mathcal{C}' \} = \bigwedge\{ \Diamondblack_g c \mid c \in \mathcal{C} \}$.
\end{enumerate}
\end{prop}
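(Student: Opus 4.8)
The plan is to prove item~1 in full and obtain item~2 by the evident order-dual argument (joins of open elements in place of meets of closed elements, $\Diamondblack_g$ in place of $\blacksquare_f$, and Lemma~\ref{lem:idealtoidealmaps}.2 in place of Lemma~\ref{lem:idealtoidealmaps}.1). Throughout I will use that $\bot\in\bba$, so that $f(\bot)\in\kbbbs$ by hypothesis; that $\kbbbs$ (resp.\ $\obbbs$) consists of the meets (resp.\ joins) of elements of $\bbb$ and is closed under arbitrary such meets (resp.\ joins); and denseness and compactness of $\bbas,\bbbs$. The computation is the conditional variant of the classical Esakia-lemma argument, along the lines of \cite[Proposition~30]{CoPaSoZh15}.

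First I would pass from $\mathcal{O}$ to a better-behaved cofinal directed subfamily. Writing $f(\bot)=\bigwedge A$ with $A\subseteq\bbb$ and $\bigvee\mathcal{O}=\bigvee B$ with $B\subseteq\bbb$ the set of elements of $\bbb$ lying below some member of $\mathcal{O}$, the hypothesis $f(\bot)\leq\bigvee\mathcal{O}$ together with compactness gives a finite $B'\subseteq B$ with $f(\bot)\leq\bigvee B'$; since each element of $B'$ is below some member of $\mathcal{O}$, upward-directedness of $\mathcal{O}$ yields a single $o^\ast\in\mathcal{O}$ with $f(\bot)\leq o^\ast$. Then I would set $\mathcal{O}':=\{o\in\mathcal{O}\mid o\geq o^\ast\}$: it is upward-directed and cofinal in $\mathcal{O}$, so $\bigvee\mathcal{O}'=\bigvee\mathcal{O}$; since $f(\bot)\leq o$ for every $o\in\mathcal{O}'$, Lemma~\ref{lem:idealtoidealmaps}.1 shows $\blacksquare_f o=\bigvee\{a\in\bba\mid a\leq\blacksquare_f o\}\in\obbas$ for each such $o$; and monotonicity of $\blacksquare_f$ together with upward-directedness gives $\bigvee\{\blacksquare_f o\mid o\in\mathcal{O}'\}=\bigvee\{\blacksquare_f o\mid o\in\mathcal{O}\}$. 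This already establishes the ``moreover'' clause and reduces the identity to be proved to $\blacksquare_f(\bigvee\mathcal{O}')=\bigvee\{\blacksquare_f o\mid o\in\mathcal{O}'\}$.

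For this last identity the inequality $\geq$ is just monotonicity of $\blacksquare_f$. For $\leq$ I would use denseness: it suffices to show $c\leq\bigvee\{\blacksquare_f o\mid o\in\mathcal{O}'\}$ for every closed $c\leq\blacksquare_f(\bigvee\mathcal{O}')$, and the case $c=\bot$ is trivial. For $c\neq\bot$: from the adjunction $\Diamond_f\dashv\blacksquare_f$ we get $\Diamond_f c\leq\bigvee\mathcal{O}'$, whence $f(c)=f(\bot)\vee\Diamond_f c\leq\bigvee\mathcal{O}'$ using $f(\bot)\leq\bigvee\mathcal{O}'$ and Lemmas~\ref{lemma:LE algebras interpret normalizations} and~\ref{lemma: f and diamond f coincide on non bottom els}. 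Since $c=\bigwedge\{a\in\bba\mid c\leq a\}$ is a down-directed meet of closed elements and $f$ is closed Esakia, $f(c)=\bigwedge\{f(a)\mid a\in\bba,\ c\leq a\}$, which is a closed element (each $f(a)$ is closed and $\kbbbs$ is meet-closed). Presenting $f(c)$ as a meet of elements of $\bbb$ and $\bigvee\mathcal{O}'$ as a join of elements of $\bbb$, compactness and the upward-directedness of $\mathcal{O}'$ produce a single $o^{\ast\ast}\in\mathcal{O}'$ with $f(c)\leq o^{\ast\ast}$; then $\Diamond_f c\leq o^{\ast\ast}$, so $c\leq\blacksquare_f o^{\ast\ast}\leq\bigvee\{\blacksquare_f o\mid o\in\mathcal{O}'\}$, as wanted.

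The main obstacle, to my mind, is not a single hard step but the repeated interplay of compactness with directedness: compactness only ever returns finitely many elements of $\bbb$, and at each stage one must absorb them into one member of the directed family $\mathcal{O}$ (resp.\ $\mathcal{C}$). The hypotheses $f(\bot)\leq\bigvee\mathcal{O}$ and $g(\top)\geq\bigwedge\mathcal{C}$ are exactly what permit this absorption for $f(\bot)$ (resp.\ $g(\top)$), which is precisely the side condition of Lemma~\ref{lem:idealtoidealmaps} and hence what guarantees that the right-hand sides of the two identities are genuinely joins of open, resp.\ meets of closed, elements --- without it one could not even state the identity in the claimed form. A minor but necessary point is justifying $f(c)=f(\bot)\vee\Diamond_f c$ on $\bbas$, which is read off from Lemmas~\ref{lemma:LE algebras interpret normalizations} and~\ref{lemma: f and diamond f coincide on non bottom els}.
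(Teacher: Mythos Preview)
Your argument is correct and follows essentially the same route as the paper: pass to $f(k)=f(\bot)\vee\Diamond_f k$ via adjunction and Lemma~\ref{lemma:LE algebras interpret normalizations}, use the closed-Esakia hypothesis to see that $f(k)$ is closed, then apply compactness and upward-directedness of $\mathcal{O}$ to land in a single $o\in\mathcal{O}$; item~2 is order-dual. The only cosmetic difference is the order of exposition and your choice of $\mathcal{O}'$: you first extract a single $o^\ast$ with $f(\bot)\leq o^\ast$ by compactness and take $\mathcal{O}'=\{o\in\mathcal{O}\mid o\geq o^\ast\}$, whereas the paper proves the identity first and then sets $\mathcal{O}':=\{o\in\mathcal{O}\mid o\geq o_k\text{ for some closed }k\leq\blacksquare_f(\bigvee\mathcal{O})\}$, reusing the witnesses $o_k$ produced along the way. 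Your $\mathcal{O}'$ is the special case $k=\bot$ of the paper's, and the resulting argument is marginally cleaner.
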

\begin{proof}
1. For the first part, it is enough to show that, if $k\in \kbbas$ and $k\leq \blacksquare_f (\bigvee \mathcal{O}) $, then $k\leq \blacksquare_f (o_k) $ for some $o_k\in \mathcal{O} $. The assumption $k\leq \blacksquare_f (\bigvee \mathcal{O}) $ can be rewritten as $\Diamond_f k\leq \bigvee \mathcal{O} $, which together with $f(\bot)\leq \bigvee \mathcal{O}$ yields $f(\bot)\vee \Diamond_f k\leq  \bigvee \mathcal{O}$. By Lemma \ref{lemma:LE algebras interpret normalizations}.1, this inequality can be rewritten as $f(k)\leq \bigvee \mathcal{O} $. From  $f$ being closed Esakia and $f(a)\in \kbbbs$ for every $a\in \bba$, it follows that $f(k)\in \kbbbs$. By compactness, $f( k)\leq \bigvee_{i = 1}^n o_i $ for some $o_1,\ldots,o_n\in \mathcal{O}$. Since $\mathcal{O}$ is upward-directed,  $o_k \geq \bigvee_{i = 1}^n o_i$ for some $o_k\in \mathcal{O}$. Then  $f(\bot)\vee \Diamond_f k = f(k)\leq o_k$, which yields $\Diamond_f k\leq o_k$, which by adjunction can be rewritten as $k\leq \blacksquare_f (o_k) $ as required.

As to the second part of the statement, notice that the assumption  $f(\bot)\leq \bigvee \mathcal{O}$ is too weak to imply that  $f(\bot)\leq o$ for each $o\in \mathcal{O}$, and hence we cannot conclude, by way of   Lemma \ref{lem:idealtoidealmaps}, that $\blacksquare_f o \in O(\ca)$ for every $o\in \mathcal{O}$. However, let \[\mathcal{O}': = \{o\in \mathcal{O}\mid o\geq o_k \mbox{ for some } k\in K(\ca) \mbox{ s.t. } k\leq  \blacksquare_f (\bigvee \mathcal{O})\}.\]
Clearly, by construction we have that $\mathcal{O}'$ is upward-directed and $\bigvee\mathcal{O}' = \bigvee \mathcal{O}$, and moreover for each $o\in \mathcal{O}'$, we have that $f(\bot)\leq o_k\leq o$ for some  $k\in K(\ca)$ s.t.\ $k\leq  \blacksquare_f (\bigvee \mathcal{O})$. Hence, by Lemma \ref{lem:idealtoidealmaps}, $\blacksquare_f o\in O(\ca)$ for each $o\in \mathcal{O}'$. Moreover, the monotonicity of $\blacksquare_f$ and the previous part of the statement imply that  $\bigvee\{ \blacksquare_f o \mid o \in \mathcal{O}' \} = \blacksquare_f (\bigvee \mathcal{O}) = \bigvee\{ \blacksquare_f o \mid o \in \mathcal{O} \}$.

2. is order-dual.
\end{proof}

\subsection{Intersection lemmas for $\mathcal{L}_{\mathrm{LE}}^+$-formulas}\label{ssec:intersection lemma}

Recall that  $\blacksquare_f, \Diamondblack_g, \blhd, \brhd: \bbas \to \bbas$ respectively denote the adjoints of the normalization maps $\Diamond_f$ for any $f\in \mathcal{F}_r$ with $\varepsilon_f=1$, $\Box_g$ for any $g\in \mathcal{G}_r$ with $\varepsilon_g=1$, ${\lhd}_f$ for any $f\in \mathcal{F}_r$ with $\varepsilon_f=\partial$, ${\rhd}_g$ for any $g\in \mathcal{G}_r$ with $\varepsilon_g=\partial$. 
Then Lemma \ref{lem:idealtoidealmaps} immediately implies the following facts, which will be needed for the soundness of the topological Ackermann rule:
\begin{lem}\label{lem:idealtoideal}
\begin{enumerate}
\item
If $f\in \mathcal{F}_r$ with $\varepsilon_f=1$, and $o \in \obbas$ s.t.\ $f(\bot)\leq o$, then $\blacksquare_f (o) = \bigvee\{ a \ | \ a \leq \blacksquare_{f}(o) \} \in  \obbas$.
\item
If $g\in \mathcal{G}_r$ with $\varepsilon_g=1$, and $k\in \kbbas$ s.t.\ $g(\top)\geq k$, then $\Diamondblack_g (k) = \bigwedge\{ a \ | \ a \geq \Diamondblack_{g}(k) \} \in  \kbbas$.
\item
If $f\in \mathcal{F}_r$ with $\varepsilon_f=\partial$, and $o\in \obbas$ s.t.\ $f(\top)\leq o$, then $\blhd (o) = \bigwedge\{ a \ | \ a \geq \blhd(o) \} \in  \kbbas$.
\item
If $g\in \mathcal{G}_r$ with $\varepsilon_g=\partial$, and $k \in \kbbas$ s.t.\ $g(\bot)\geq k$, then $\brhd (k) = \bigvee\{ a \ | \ a \leq \brhd(k) \} \in  \obbas$.
\end{enumerate}
\end{lem}

\noindent In what follows,  we work under the assumption that the values of all parameters (propositional variables, nominals and conominals) occurring in the term functions mentioned in the statements of propositions and lemmas  are given by  admissible assignments.

\begin{lem}
\label{lemma:synct closed is sem closed}
Let $\phi(p)$ be syntactically closed, $\psi(p)$  syntactically open (cf.\ Definition \ref{def:syn:closed:and:open}), $c\in\kbbas$ and $o\in\obbas$.

\begin{enumerate}
\item If $\phi(p)$ is positive in $p$, $\psi(p)$ is negative in $p$, and for all $f\in \mathcal{F}_r$ and  $g\in \mathcal{G}_r$,
\begin{enumerate}
\item if $\varepsilon_f = 1$, then $f(\bot)\leq \psi'^{\ca}(c)$ for any subformula $\blacksquare_{f}\psi'(p)$ in $\phi(p)$ and  $\psi(p)$;
\item if $\varepsilon_g = 1$, then $g(\top)\geq \psi'^{\ca}(c)$ for any subformula $\Diamondblack_{g}\psi'(p)$ in $\phi(p)$ and  $\psi(p)$;
\item if $\varepsilon_f = \partial$, then $f(\top)\leq \psi'^{\ca}(c)$ for any subformula $\blhd\!\!\psi'(p)$ in $\phi(p)$ and  $\psi(p)$;
\item if $\varepsilon_g = \partial$, then $g(\bot)\geq \psi'^{\ca}(c)$ for any subformula $\brhd\!\!\psi'(p)$ in $\phi(p)$ and  $\psi(p)$;
\end{enumerate}
then $\phi(c)\in \kbbas$ and $\psi(c)\in \obbas$ for each $c\in \kbbas$.

\item If $\phi(p)$ is negative in $p$, $\psi(p)$ is positive in $p$, and  for all $f\in \mathcal{F}_r$ and  $g\in \mathcal{G}_r$,
\begin{enumerate}
\item if $\varepsilon_f = 1$, then $f(\bot)\leq \psi'^{\ca}(o)$ for any subformula $\blacksquare_{f}\psi'(p)$ in $\phi(p)$ and  $\psi(p)$;
\item if $\varepsilon_g = 1$, then $g(\top)\geq \psi'^{\ca}(o)$ for any subformula $\Diamondblack_{g}\psi'(p)$ in $\phi(p)$ and  $\psi(p)$;
\item if $\varepsilon_f = \partial$, then $f(\top)\leq \psi'^{\ca}(o)$ for any subformula $\blhd\!\!\psi'(p)$ in $\phi(p)$ and  $\psi(p)$;
\item if $\varepsilon_g = \partial$, then $g(\bot)\geq \psi'^{\ca}(o)$ for any subformula $\brhd\!\!\psi'(p)$ in $\phi(p)$ and  $\psi(p)$;
\end{enumerate}
then $\phi(o)\in K(\ca)$ and $\psi(o)\in O(\ca)$ for each $o\in O(\ca)$.
\end{enumerate}
\end{lem}
\begin{proof}
1. The proof proceeds by simultaneous induction on $\phi$ and $\psi$. It is easy to see that $\phi$ cannot be $\cnomm$, and the outermost connective of $\phi$ cannot be $f^{\sharp}_i$ for any $f\in\mathcal{F}_n$ with $\varepsilon_f(i) = 1$, or $g_j^{\flat}$ for any $g\in\mathcal{G}_n$ with $\varepsilon_g(j) = \partial$, or $\blacksquare_{f}$ for any $f\in \mathcal{F}_r$ with $\varepsilon_f = 1$, or $\blacktriangleright_{g}$ for any $g\in \mathcal{G}_r$ with $\varepsilon_g = \partial$. Similarly, $\psi$ cannot be $\nomi$, and the outermost connective of $\psi$ cannot be $g_j^{\flat}$ for any $g\in\mathcal{G}_n$ with $\varepsilon_g(j) = 1$, or $f^{\sharp}_i$ for any $f\in\mathcal{F}_n$ with $\varepsilon_f(i) = \partial$, or $\Diamondblack_{g}$ for any $g\in \mathcal{G}_r$ with $\varepsilon_g = 1$, or $\blacktriangleleft_{f}$ for any $f\in \mathcal{F}_r$ with $\varepsilon_f = \partial$.

The basic cases, that is, $\phi=\perp, \top, p, q, \nomi$ and $\psi=\perp, \top, p, q, \cnomm$ are straightforward.

Assume that $\phi(p)=\Diamondblack_{g}\phi'(p)$. Since $\phi(p)$ is positive in $p$, the subformula $\phi'(p)$ is syntactically closed and positive in $p$, and assumptions 1(a)-1(d) hold also for $\phi'(p)$.  Hence, by inductive hypothesis, $\phi'(c)\in \kbbas $ for any $c\in \kbbas$. 
In particular, assumption 1(b) implies that $g(\top)\geq \phi'(c)$. Hence, by Lemma \ref{lem:idealtoideal}, $\Diamondblack_{g}\phi'(c)\in \kbbas$, as required. The case in which $\phi(p)$ is negative in $p$ is argued order-dually.

The cases in which $\phi(p)=\blacksquare_{f}\phi'(p), \blhd\!\!\phi'(p), \brhd\!\!\phi'(p)$ are similar to the one above.

The cases of the remaining connectives are treated as in \cite[Lemma 11.9]{ConPal12} and the corresponding proofs are omitted. Of course, in \cite{ConPal12} nominals and co-nominals are evaluated to completely join- and meet-irreducibles, respectively. However, the only property of completely join- and meet-irreducibles used there is the fact that they are, respectively, closed and open, and therefore the proof is directly transferable to the present setting.
\end{proof}

\begin{lem}[Intersection lemma]\label{MJ:Pres}

Let $\phi(p)$ be syntactically closed, $\psi(p)$  syntactically open, $\mathcal{C}\subseteq \kbbas$  downward-directed, $\mathcal{O}\subseteq \obbas$ upward-directed. Then

\begin{enumerate}
\item if $\phi(p)$ is positive in $p$, $\psi(p)$ is negative in $p$, and for all $f\in \mathcal{F}_r$ and  $g\in \mathcal{G}_r$,

\begin{enumerate}
\item if $\varepsilon_f = 1$, then  $f(\bot)\leq \psi'^{\ca}(\bigwedge\mathcal{C})$  for any subformula $\blacksquare_{f}\psi'(p)$ in $\phi(p)$ and  $\psi(p)$,
\item if $\varepsilon_g = 1$, then $g(\top)\geq \psi'^{\ca}(\bigwedge\mathcal{C})$ for any subformula $\Diamondblack_{g}\psi'(p)$ in $\phi(p)$ and $\psi(p)$,
\item if $\varepsilon_f = \partial$, then $f(\top)\leq \psi'^{\ca}(\bigwedge\mathcal{C})$ for any subformula $\blhd\!\!\psi'(p)$ in $\phi(p)$ and  $\psi(p)$,
\item if $\varepsilon_g = \partial$, then  $g(\bot)\geq \psi'^{\ca}(\bigwedge\mathcal{C})$ for any subformula $\brhd\!\!\psi'(p)$ in $\phi(p)$ and  $\psi(p)$,
\end{enumerate}

then

\begin{enumerate}
\item[(a)] $\phi^{\bbas}(\bigwedge\mathcal{C})=\bigwedge\{\phi^{\bbas}(c): c\in \mathcal{C}'\}$ for some down-directed subcollection $\mathcal{C}'\subseteq \mathcal{C}$ such that $\phi^{\bbas}(c)\in \kbbas$ for each $c\in \mathcal{C}'$.
\item[(b)] $\psi^{\bbas}(\bigwedge\mathcal{C})=\bigvee\{\psi^{\bbas}(c): c\in \mathcal{C}'\}$ for some  down-directed subcollection $\mathcal{C}'\subseteq \mathcal{C}$ such that $\psi^{\bbas}(c)\in \obbas$ for each $c\in \mathcal{C}'$.
\end{enumerate}
\item If $\phi(p)$ is negative in $p$, $\psi(p)$ is positive in $p$, and for all $f\in \mathcal{F}_r$ and  $g\in \mathcal{G}_r$,

\begin{enumerate}
\item if $\varepsilon_f = 1$, then $f(\bot)\leq \psi'^{\ca}(\bigvee\mathcal{O})$  for any subformula $\blacksquare_{f}\psi'(p)$ in $\phi(p)$ and  $\psi(p)$,
\item if $\varepsilon_g = 1$, then $g(\top)\geq \psi'^{\ca}(\bigvee\mathcal{O})$ for any subformula $\Diamondblack_{g}\psi'(p)$ in $\phi(p)$ and  $\psi(p)$,
\item if $\varepsilon_f = \partial$, then $f(\top)\leq \psi'^{\ca}(\bigvee\mathcal{O})$ for any subformula $\blhd\!\!\psi'(p)$ in $\phi(p)$ and  $\psi(p)$,
\item if $\varepsilon_g = \partial$, then $g(\bot)\geq \psi'^{\bbas}(\bigvee\mathcal{O})$ for any subformula $\brhd\!\!\psi'(p)$ in $\phi(p)$ and  $\psi(p)$,

\end{enumerate}

then

\begin{enumerate}

\item[(a)] $\phi^{\ca}(\bigvee\mathcal{O})=\bigwedge\{\phi^{\ca}(o): o\in \mathcal{O}'\}$ for some  up-directed subcollection $\mathcal{O}'\subseteq \mathcal{O}$ such that $\phi^{\ca}(o)\in K(\ca)$ for each $o\in \mathcal{O}'$.

\item[(b)] $\psi^{\ca}(\bigvee\mathcal{O})=\bigvee\{\psi^{\ca}(o): o\in \mathcal{O}'\}$ for some up-directed subcollection $\mathcal{O}'\subseteq\mathcal{O}$ such that $\psi^{\ca}(o)\in O(\ca)$ for each $o\in \mathcal{O}'$.
\end{enumerate}
\end{enumerate}
\end{lem}
\begin{proof}
The proof proceeds by simultaneous induction on $\phi$ and $\psi$. It is easy to see that $\phi$ cannot be $\cnomm$, and the outermost connective of $\phi$ cannot be $f^{\sharp}_i$ for any $f\in\mathcal{F}_n$ with $\varepsilon_f(i) = 1$, or $g_j^{\flat}$ for any $g\in\mathcal{G}_n$ with $\varepsilon_g(j) = \partial$, or $\blacksquare_{f}$ for any $f\in \mathcal{F}_r$ with $\varepsilon_f = 1$, or $\blacktriangleright_{g}$ for any $g\in \mathcal{G}_r$ with $\varepsilon_g = \partial$. Similarly, $\psi$ cannot be $\nomi$, and the outermost connective of $\psi$ cannot be $g_j^{\flat}$ for any $g\in\mathcal{G}_n$ with $\varepsilon_g(j) = 1$, or $f^{\sharp}_i$ for any $f\in\mathcal{F}_n$ with $\varepsilon_f(i) = \partial$, or $\Diamondblack_{g}$ for any $g\in \mathcal{G}_r$ with $\varepsilon_g = 1$, or $\blacktriangleleft_{f}$ for any $f\in \mathcal{F}_r$ with $\varepsilon_f = \partial$.

The basic cases in which $\phi=\perp, \top, p, q, \nomi$ and $\psi=\perp, \top, p, q, \cnomm$ are straightforward.

Assume that $\phi(p)=\Diamondblack_{g}\phi'(p)$ for some $g\in \mathcal{G}_r$ with $\varepsilon_g = 1$. Since $\phi(p)$ is positive in $p$, the subformula $\phi'(p)$ is syntactically closed and positive in $p$, and assumptions 1(a)-1(d) hold also for $\phi'(p)$.  Hence, by inductive hypothesis, $\phi'(\bigwedge\mathcal{C}) = \bigwedge\{\phi'(c)\mid c\in\mathcal{C}''\}$ for some down-directed subcollection $\mathcal{C}''\subseteq \mathcal{C}$ such that $\phi'(c)\in \kbbas$ for each $c\in \mathcal{C}''$. In particular, assumption 1(b) implies that $g(\top)\geq \phi'(\bigwedge\mathcal{C}) = \bigwedge\{\phi'(c)\mid c\in \mathcal{C}''\}$. Notice that $\phi'(p)$ being positive in $p$ and $\mathcal{C}''$ being down-directed imply that $\{\phi'(c)\mid c\in \mathcal{C}''\}$ is down-directed. Hence, by Proposition \ref{prop:Esa} applied to $\{\phi'(c)\mid c\in \mathcal{C}''\}$, we get that $\Diamondblack_{g}\phi'(\bigwedge\mathcal{C}) = \Diamondblack_{g}(\bigwedge\{\phi'(c)\mid c\in\mathcal{C}''\}) = \bigwedge\{\Diamondblack_{g}\phi'(c)\mid c\in\mathcal{C}''\}$. Moreover, there exists some down-directed subcollection $\mathcal{C}'\subseteq \mathcal{C}''$ such that $\Diamondblack_{g}\phi'(c)\in \kbbas$ for each $c\in \mathcal{C}'$ and $\bigwedge\{\Diamondblack_{g}\phi'(c)\mid c\in\mathcal{C}'\} = \bigwedge\{\Diamondblack_{g}\phi'(c)\mid c\in\mathcal{C}''\}$. This gives us $\Diamondblack_{g}\phi'(\bigwedge\mathcal{C}) =  \bigwedge\{\Diamondblack_{g}\phi'(c)\mid c\in\mathcal{C}'\}$, as required. The case in which $\phi(p)$ is negative in $p$ is argued order-dually.

The cases in which $\phi(p)=\blacksquare_{g}\phi'(p), \blhd\!\!\phi'(p), \brhd\!\!\phi'(p)$ are similar to the one above.

The cases of the remaining connectives are treated as in \cite[Lemma 11.10]{ConPal12} and the corresponding proofs are omitted.
\end{proof}

\subsection{Topological Ackermann Lemmas for $\mathcal{L}_{\mathrm{LE}}^+$}\label{ssec:topological ackermann}

\begin{defi}
A system $S$ of $\mathcal{L}_{\mathrm{LE}}^+$-inequalities  is {\em topologically adequate} when the following conditions hold for all $f\in \mathcal{F}_r$ and  $g\in \mathcal{G}_r$:
\begin{enumerate}
\item if $\varepsilon_f = 1$ and $\phi\leq \blacksquare_f \psi$ is in $S$, then $f(\bot)\leq \psi$ is in $S$;
\item if $\varepsilon_g = 1$ and $\Diamondblack_g\phi\leq  \psi$ is in $S$, then $g(\top)\geq \phi$ is in $S$;
\item if $\varepsilon_f = \partial$ and $\blacktriangleleft_f\!\!\phi\leq \psi$ is in $S$, then $f(\top)\leq \phi$ is in $S$;
\item if $\varepsilon_g = \partial$ and $\phi\leq \blacktriangleright_g\!\! \psi$ is in $S$, then $g(\bot)\geq \psi$ is in $S$.
\end{enumerate}
\end{defi}

\begin{prop}[Right-handed Topological Ackermann Lemma]\label{Right:Ack}

Let $S$ be a topologically adequate system of $\mathcal{L}_{\mathrm{LE}}^+$-inequalities which is the union of the following disjoint subsets:
\begin{itemize}
\item
$S_1$ consists only of inequalities in which $p$ does not occur;
\item
$S_2$ consists of inequalities of the type $\alpha\leq p$, where $\alpha$ is syntactically closed and $p$ does not occur in $\alpha$;
\item
$S_3$ consists of inequalities of the type $\beta(p)\leq \gamma(p)$ where $\beta(p)$ is syntactically closed and positive in $p$, and $\gamma(p)$ be syntactically open and negative in $p$,

\end{itemize}

Then the following are equivalent:
\vspace{1mm}
\begin{enumerate}
\item
\vspace{1mm}
$\beta^{\bbas}(\bigvee\alpha^{\bbas})\leq\gamma^{\bbas}(\bigvee\alpha^{\bbas})$ for all inequalities in $S_3$, where $\bigvee\alpha$ abbreviates $\bigvee\{\alpha\mid \alpha\leq p\in S_2\}$;

\vspace{1mm}
\item
There exists $a_0\in\bba$ such that $\bigvee\alpha^{\bbas}\leq a_0$ and $\beta^{\bbas}(a_0)\leq\gamma^{\bbas}(a_0)$ for all inequalities in $S_3$.

\end{enumerate}

\end{prop}

\begin{proof}

$(\Leftarrow)$ By the monotonicity of $\beta_i(p)$ and antitonicity of $\gamma_i(p)$ in $p$ for $1\leq i\leq n$, together with $\alpha^{\bbas}\leq a_0$, we have that $\beta_i^{\bbas}(\alpha^{\bbas})\leq\beta_i^{\bbas}(a_0)\leq\gamma_i^{\bbas}(a_0)\leq\gamma_i^{\bbas}(\alpha^{\bbas})$.\\

$(\Rightarrow)$ Since the quasi-inequality is topologically adequate, by Lemma \ref{MJ:Pres}.1, $\alpha^{\bbas}\in\kbbas$.

Hence, $\alpha^{\bbas}=\bigwedge\{a\in\bba : \alpha^{\bbas}\leq a\}$, making it the meet of a downward-directed set of clopen elements. Therefore, we can rewrite each inequality in $S_3$ as \[\beta^{\bbas}(\bigwedge\{a\in\bba : \alpha^{\bbas}\leq a\})\leq\gamma^{\bbas}(\bigwedge\{a\in\bba : \alpha^{\bbas}\leq a\}).\] Since $\beta$ is syntactically closed and positive in $p$, $\gamma$ is syntactically open and negative in $p$, again by topological adequacy, we can apply Lemma \ref{MJ:Pres} and get that \[\bigwedge\{\beta^{\bbas}(a): a\in\mathcal{A}_1\}\leq\bigvee\{\gamma_i^{\bbas}(b): b\in\mathcal{A}_2\}\] for some $\mathcal{A}_1, \mathcal{A}_2\subseteq \{a\in\bba : \alpha^{\bbas}\leq a\}$ such that $\beta^{\bbas}(a)\in K(\ca)$ for each $a\in \mathcal{A}_1$, and $\gamma^{\bbas}(b)\in O(\ca)$ for each $b\in \mathcal{A}_2$. By compactness,

\[\bigwedge\{\beta_i^{\bbas}(a): a\in\mathcal{A}'_1\}\leq\bigvee\{\gamma_i^{\bbas}(b): b\in\mathcal{A}'_2\}\] for some finite subsets $\mathcal{A}'_1\subseteq\mathcal{A}_1$ and $\mathcal{A}'_2\subseteq\mathcal{A}_2$.  Then let $a^*=\bigwedge\{\bigwedge\mathcal{A}'_1\wedge \bigwedge\mathcal{A}'_2 \mid \beta\leq\gamma\in S_3\}.$ Clearly, $a^*\in\bba$, and
$\alpha^{\bbas}\leq a^*$.  By the monotonicity of $\beta(p)$ and the antitonicity of $\gamma(p)$ in $p$ for each $\beta\leq \gamma$ in $S_3$, we have $\beta^{\bbas}(a^*)\leq\beta^{\bbas}(a)$ and $\gamma_i^{\bbas}(b)\leq\gamma_i^{\bbas}(a^*)$ for all $a\in\mathcal{A}'_1$ and all $b\in\mathcal{A}'_2$. Therefore, \[\beta_i^{\bbas}(a^*)\leq\bigwedge\{\beta_i^{\bbas}(a): a\in\mathcal{A}'_1\}\leq\bigvee\{\gamma_i^{\bbas}(b): b\in\mathcal{A}'_2\}\leq\gamma_i^{\bbas}(a^*)\] for each  $\beta\leq \gamma$ in $S_3$.
\end{proof}

\begin{prop}[Left-handed Topological Ackermann Lemma]\label{Left:Ack}

Let $S$ be a topologically adequate system of $\mathcal{L}_{\mathrm{LE}}^+$-inequalities which is the union of the following disjoint subsets:
\begin{itemize}
\item
$S_1$ consists only of inequalities in which $p$ does not occur;
\item
$S_2$ consists of inequalities of the type $p\leq\alpha$, where $\alpha$ is syntactically open and $p$ does not occur in $\alpha$;
\item
$S_3$ consists of inequalities of the type $\beta(p)\leq \gamma(p)$ where $\beta(p)$ is syntactically closed and negative in $p$, and $\gamma(p)$ be syntactically open and positive in $p$,

\end{itemize}
Then the following are equivalent:
\vspace{1mm}
\begin{enumerate}
\item
\vspace{1mm}
$\beta^{\bbas}(\bigwedge\alpha^{\bbas})\leq\gamma^{\bbas}(\bigwedge\alpha^{\bbas})$ for all inequalities in $S_3$, where $\bigwedge\alpha$ abbreviates $\bigwedge\{\alpha\mid p\leq\alpha\in S_2\}$;

\vspace{1mm}
\item
There exists $a_0\in\bba$ such that $a_0\leq\bigwedge\alpha^{\bbas}$ and $\beta^{\bbas}(a_0)\leq\gamma^{\bbas}(a_0)$ for all inequalities in $S_3$.
\end{enumerate}
\end{prop}
\begin{proof}
The proof is similar to the proof of the right-handed Ackermann lemma and is omitted.
\end{proof}

\end{document}